\newcommand{\assign}{:=}
\newcommand{\cdummy}{\cdot}
\newcommand{\mathd}{\mathrm{d}}
\newcommand{\nocomma}{}
\newcommand{\nosymbol}{}
\newcommand{\tmcolor}[2]{{\color{#1}{#2}}}
\newcommand{\tmdummy}{$\mbox{}$}
\newcommand{\tmmathbf}[1]{\ensuremath{\boldsymbol{#1}}}
\newcommand{\tmop}[1]{\ensuremath{\operatorname{#1}}}
\newcommand{\tmtextbf}[1]{\text{{\bfseries{#1}}}}
\newcommand{\tmtextit}[1]{\text{{\itshape{#1}}}}
\newcommand{\tmverbatim}[1]{\text{{\ttfamily{#1}}}}
\newenvironment{proof}{\noindent\textbf{Proof\ }}{\hspace*{\fill}$\Box$\medskip}
\newenvironment{tmparmod}[3]{\begin{list}{}{\setlength{\topsep}{0pt}\setlength{\leftmargin}{#1}\setlength{\rightmargin}{#2}\setlength{\parindent}{#3}\setlength{\listparindent}{\parindent}\setlength{\itemindent}{\parindent}\setlength{\parsep}{\parskip}} \item[]}{\end{list}}
\newtheorem{definition}{Definition}
\DeclareSymbolFont{tipa}{T3}{cmr}{m}{n}
\DeclareMathAccent{\invbreve}{\mathalpha}{tipa}{16}
{\theorembodyfont{\rmfamily}\newtheorem{remark}{Remark}}
\newtheorem{theorem}{Theorem}
\newcommand{\correction}[1]{{\color{black}{#1}}}
\newcommand{\pdv}[2]{\frac{\partial #1}{\partial #2}}
\newcommand{\dv}[2]{\frac{\mathd #1}{\mathd #2}}
\newcommand{\C}{\tmmathbf{\mathcal{C}}}
\newcommand{\ba}{\tmmathbf{a}}
\newcommand{\be}{\tmmathbf{e}}
\newcommand{\bc}{\tmmathbf{c}}
\newcommand{\bn}{\tmmathbf{n}}
\newcommand{\bp}{\tmmathbf{p}}
\newcommand{\bq}{\tmmathbf{q}}
\newcommand{\bzero}{\tmmathbf{0}}
\newcommand{\bv}{\tmmathbf{v}}
\newcommand{\bx}{\tmmathbf{x}}
\newcommand{\by}{\tmmathbf{y}}
\newcommand{\vxi}{{\ensuremath{\tmmathbf{\xi}}}}
\newcommand{\uu}{\tmmathbf{u}}
\newcommand{\uU}{\tmmathbf{U}}
\newcommand{\tu}{\widetilde{\tmmathbf{u}}}
\newcommand{\pf}{\tmmathbf{f}}
\newcommand{\F}{\tmmathbf{F}}
\newcommand{\tf}{\tilde{\tmmathbf{f}}}
\newcommand{\tF}{\tilde{\tmmathbf{F}}}
\newcommand{\Div}{\nabla_{\tmmathbf{x}} \cdot}
\newcommand{\II}{\mathbb{I}_N}
\newcommand{\InI}{I_N}
\newcommand{\xithree}{\xi^3}
\newcommand{\xii}{\xi^i}
\newcommand{\polyP}{P}
\newcommand{\proP}{\mathcal{P}}
\newcommand{\Vi}{\tmverbatim{V}}
\providecommand{\subindex}{=}
\newcommand{\re}{\mathbb{R}}
\newcommand{\half}{\frac{1}{2}}
\newcommand{\ud}{\mathrm{d}}
\newcommand{\pd}[2]{\frac{\partial #1}{\partial #2}}
\newcommand{\Uad}{\mathcal{U}_{\textrm{\tmop{ad}}}}
\providecommand{\mathd}{\mathrm{d}}
\providecommand{\tmcolor}[2]{{\color[HTML]{1}#2}}
\providecommand{\tmmathbf}[1]{\ensuremath{\tmmathbf{#1}}}
\providecommand{\tmop}[1]{\ensuremath{\textrm{#1}}}
\providecommand{\tmtextbf}[1]{\tmtextbf{#1}}
\providecommand{\boldsymbol}[1]{\tmtextbf{#1}}
\providecommand{\citep}{}
\providecommand{\citet}{}
\newcommand{\ijkc}{i, j, k}
\newcommand{\Vu}{\tmverbatim{u}}
\newcommand{\VR}{\tmverbatim{R}}
\newcommand{\Nph}{N + \frac{1}{2}}
\newcommand{\Nmh}{N-{\frac{1}{2}}}
\newcommand{\Cijk}{C_{\tmmathbf{p}}}
\newcommand{\Oo}{\Omega_o}
\newcommand{\Oip}{\partial \Omega_{o, i}^R}
\newcommand{\Ois}{\partial \Omega_{o, i}^S}
\newcommand{\Oim}{\partial \Omega_{o, i}^L}
\newcommand{\bnr}{\widehat{\tmmathbf{n}}}
\newcommand{\en}{\mathbb{E}}
\newcommand{\thresh}{\mathbb{T}}
\newcommand{\Leg}{L}
\newcommand{\emh}{e - \frac{1}{2}}
\newcommand{\eph}{e + \frac{1}{2}}
\newcommand{\epmh}{e \pm \frac{1}{2}}
\newcommand{\atu}{\invbreve{\tmmathbf{u}}}
\newcommand{\atF}{\invbreve{\tmmathbf{F}}}
\newcommand{\utilow}{\invbreve{\tmmathbf{u}}^{\tmop{low}, n + 1}}
\newcommand{\Nnd}{\mathbb{N}_N^d}
\newcommand{\Nod}{\mathbb{N}_1^d}
\newcommand{\bss}{\tmmathbf{s}}
\newcommand{\sphi}{\phi}
\newcommand{\G}{\Gamma}
\newcommand{\X}{\Xi}
\newcommand{\bet}{\tmmathbf{\eta}}
\newcommand{\bga}{\tmmathbf{\gamma}}
\newcommand{\nou}{\breve{\tmmathbf{u}}}
\newcommand{\uebp}{\tmmathbf{u}_{e, \tmmathbf{p}}}
\newcommand{\uebq}{\tmmathbf{u}_{e, \tmmathbf{q}}}
\newcommand{\uep}{\tmmathbf{u}_{e, p}}
\newcommand{\uez}{\tmmathbf{u}_{e, 0}}
\newcommand{\uepoz}{\tmmathbf{u}_{e + 1, 0}}
\newcommand{\ueN}{\tmmathbf{u}_{e, N}}
\newcommand{\pph}{p + \frac{1}{2}}
\newcommand{\pmh}{p - \frac{1}{2}}
\newcommand{\Cs}{C_\text{CFL}}
\newcommand*{\mint}[1]{%
	\mint@l{#1}{}%
}
\newcommand*{\mint@l}[2]{%
	\@ifnextchar\limits{%
		\mint@l{#1}%
	}{%
		\@ifnextchar\nolimits{%
			\mint@l{#1}%
		}{%
			\@ifnextchar\displaylimits{%
				\mint@l{#1}%
			}{%
				\mint@s{#2}{#1}%
			}%
		}%
	}%
}
\newcommand*{\mint@s}[2]{%
	\@ifnextchar_{%
		\mint@sub{#1}{#2}%
	}{%
		\@ifnextchar^{%
			\mint@sup{#1}{#2}%
		}{%
			\mint@{#1}{#2}{}{}%
		}%
	}%
}
\def\mint@sub#1#2_#3{%
	\@ifnextchar^{%
		\mint@sub@sup{#1}{#2}{#3}%
	}{%
		\mint@{#1}{#2}{#3}{}%
	}%
}
\def\mint@sup#1#2^#3{%
	\@ifnextchar_{%
		\mint@sup@sub{#1}{#2}{#3}%
	}{%
		\mint@{#1}{#2}{}{#3}%
	}%
}
\def\mint@sub@sup#1#2#3^#4{%
	\mint@{#1}{#2}{#3}{#4}%
}
\def\mint@sup@sub#1#2#3_#4{%
	\mint@{#1}{#2}{#4}{#3}%
}
\newcommand*{\mint@}[4]{%
	\mathop{}%
	\mkern-\thinmuskip
	\mathchoice{%
		\mint@@{#1}{#2}{#3}{#4}%
		\displaystyle\textstyle\scriptstyle
	}{%
		\mint@@{#1}{#2}{#3}{#4}%
		\textstyle\scriptstyle\scriptstyle
	}{%
		\mint@@{#1}{#2}{#3}{#4}%
		\scriptstyle\scriptscriptstyle\scriptscriptstyle
	}{%
		\mint@@{#1}{#2}{#3}{#4}%
		\scriptscriptstyle\scriptscriptstyle\scriptscriptstyle
	}%
	\mkern-\thinmuskip
	\int#1%
	\ifx\\#3\\\else_{#3}\fi
	\ifx\\#4\\\else^{#4}\fi
}
\newcommand*{\mint@@}[7]{%
	\begingroup
	\sbox0{$#5\int\m@th$}%
	\sbox2{$#5\int_{}\m@th$}%
	\dimen2=\wd0 %
	\let\mint@limits=#1\relax
	\ifx\mint@limits\relax
	\sbox4{$#5\int_{\kern1sp}^{\kern1sp}\m@th$}%
	\ifdim\wd4>\wd2 %
	\let\mint@limits=\nolimits
	\else
	\let\mint@limits=\limits
	\fi
	\fi
	\ifx\mint@limits\displaylimits
	\ifx#5\displaystyle
	\let\mint@limits=\limits
	\fi
	\fi
	\ifx\mint@limits\limits
	\sbox0{$#7#3\m@th$}%
	\sbox2{$#7#4\m@th$}%
	\ifdim\wd0>\dimen2 %
	\dimen2=\wd0 %
	\fi
	\ifdim\wd2>\dimen2 %
	\dimen2=\wd2 %
	\fi
	\fi
	\rlap{%
		$#5%
		\vcenter{%
			\hbox to\dimen2{%
				\hss
				$#6{#2}\m@th$%
				\hss
			}%
		}%
		$%
	}%
	\endgroup
}
\newcommand{\qint}{\mint{-}}
\title{Lax-Wendroff Flux Reconstruction on adaptive curvilinear meshes with error based time stepping for hyperbolic conservation laws}
\author{
Arpit~Babbar \orcidlink{0000-0002-9453-370X} \\
Centre for Applicable Mathematics\\
Tata Institute of Fundamental Research\\
Bangalore -- 560065\\
\texttt{arpit@tifrbng.res.in} \\
\And
Praveen~Chandrashekar \orcidlink{0000-0003-1903-4107}\thanks{Corresponding author}\\
Centre for Applicable Mathematics\\
Tata Institute of Fundamental Research\\
Bangalore -- 560065\\
\texttt{praveen@math.tifrbng.res.in}
}
\begin{document}

\maketitle

\begin{abstract}
Lax-Wendroff Flux Reconstruction (LWFR) is a single-stage, high order, quadrature free method for solving hyperbolic conservation laws. This work extends the LWFR scheme to solve conservation laws on curvilinear meshes with adaptive mesh refinement (AMR). The scheme uses a subcell based blending limiter to perform shock capturing and exploits the same subcell structure to obtain admissibility preservation on curvilinear meshes. It is proven that the proposed extension of LWFR scheme to curvilinear grids preserves constant solution (free stream preservation) under the standard metric identities.
For curvilinear meshes, linear Fourier stability analysis cannot be used to obtain an optimal CFL number. Thus, an embedded-error based time step computation method is proposed for LWFR method which reduces fine-tuning process required to select a stable CFL number using the wave speed based time step computation.  The developments are tested on compressible Euler's equations, validating the blending limiter, admissibility preservation, AMR algorithm, curvilinear meshes and error based time stepping.
\end{abstract}
\keywords{Hyperbolic conservation laws \and Lax-Wendroff flux reconstruction \and Curvilinear grids \and Admissibility preservation and shock Capturing \and Adaptive mesh refinement \and Error based time stepping}


\section{Introduction}
Lax-Wendroff method is a single step method for time dependent problems in contrast to method of lines approach which combines a spatial discretization scheme with a Runge-Kutta method in time. The Lax-Wendroff idea has been used for hyperbolic conservation laws to develop single step finite volume and discontinuous Galerkin methods ~\cite{Qiu2003,Qiu2005b,Zorio2017,Burger2017,Carrillo2021}. Another approach to develop high order, single-stage schemes is based on ADER schemes~{\cite{Titarev2002,Dumbser2008}}.

Flux Reconstruction (FR) method introduced by Huynh~\cite{Huynh2007} is a finite-element type high order method which is quadrature-free. The key idea in this method is to construct a continuous flux approximation and then use collocation at solution points which leads to an efficient implementation that can exploit optimized matrix-vector operations and vectorization capabilities of modern CPUs. The continuous flux approximation requires a correction function whose choice affects the accuracy and stability of the method~\cite{Huynh2007,Vincent2011a,Vincent2015,Trojak2021}; by properly choosing the correction function and solution points, the FR method can be shown to be equivalent to some discontinuous Galerkin and spectral difference schemes~{\cite{Huynh2007,Trojak2021}}.

In~{\cite{babbar2022}}, a Lax-Wendroff Flux Reconstruction (LWFR) scheme was proposed which used the approximate Lax-Wendroff procedure of~{\cite{Zorio2017}} to obtain an element local high order approximation of the time averaged flux and then performs the FR procedure on it to perform evolution in a single stage. The numerical flux was carefully constructed in~{\cite{babbar2022}} to obtain enhanced accuracy and linear stability based on Fourier stability analysis. In~{\cite{babbar2023admissibility}}, a subcell based shock capturing blending scheme was introduced for LWFR based on the work of subcell based scheme of~{\cite{henneman2021}}. To enhance accuracy,~{\cite{babbar2023admissibility}} used Gauss-Legendre solution points and performed MUSCL-Hancock reconstruction on the subcells. Since the subcells used in~{\cite{babbar2023admissibility}} were inherently non-cell centred, the MUSCL-Hancock scheme was extended to non-cell centred grids along with the proof of~{\cite{Berthon2006}} for admissibility preservation. The subcell structure was exploited to obtain a provably admissibility preserving LWFR scheme by careful construction of the \tmtextit{blended numerical flux} at the element interfaces.

In this work, the LWFR scheme of~{\cite{babbar2022}} is further developed to incorporate three new features:
\begin{enumerate}
\item Ability to work on curvilinear, body-fitted grids
\item Ability to work on locally and dynamically adapted grids with hanging nodes
\item Automatic error based time step computation
\end{enumerate}

Curvilinear grids are defined in terms of a tensor product polynomial map from a reference element to the physical element. The conservation law is transformed to the coordinates of the reference element and then the LWFR procedure is applied leading to a collocation method that has similar structure as on Cartesian grids. This structure also facilitates the extension of the provably admissibility preserving subcell based blending scheme of~{\cite{babbar2023admissibility}} to curvilinear grids. The FR formulation on curvilinear grids is based on its equivalence with the DG scheme, see~{\cite{Kopriva2006}}, which also obtained certain metric identities that are required for preservation of constant solutions, that is, free stream preservation. See references in~{\cite{Kopriva2006}} for a review of earlier study of metric terms in the context of other higher order schemes like finite difference schemes.  The free stream preserving conditions for the LWFR scheme are proven to be the same discrete metric identities as that of~{\cite{Kopriva2006}}. The only requirement for the required metric identities in two dimensions is that the mappings used to define the curvilinear elements must have degree less than or equal to the degree of polynomials used to approximate the solution.

In many problems, there are non-trivial and sharp solution features only in some localized parts of the domain and these features can move with respect to time. Using a uniform mesh to resolve small scale features is computationally expensive and adaptive mesh refinement (AMR) is thus very useful. In this work, we perform adaptive mesh refinement based on some local error or solution smoothness indicator. Elements with high error indicator are flagged for refinement and those with low values are flagged for coarsening. A consequence of this procedure is that we get non-conformal elements with hanging nodes which is not a major problem with discontinuous Galerkin type methods, except that one has to ensure conservation is satisfied.  For discontinuous Galerkin methods based on quadrature, conservation is ensured by performing quadrature on the cell faces from the refined side of the face~\cite{schaal2015,Zanotti2015}. For FR type methods which are of collocation type, we need numerical fluxes at certain points on the element faces, which have to \correction{be} computed on a refined face without loss of accuracy and such that conservation is also satisfied. For the LWFR scheme, we use the Mortar Element Method~\cite{Kopriva1996,Kopriva2002} to compute the solution and fluxes at non-conformal faces. The resulting method is conservative and also preserves free-stream condition on curvilinear, adapted grids.

The choice of time step is restricted by a CFL-type condition in order to satisfy linear stability and some other non-linear stability requirements like maintaining positive solutions. Linear stability analysis  can be performed on uniform Cartesian grids only, leading to some CFL-type condition which depends on wave speed estimates. In practice these conditions are then also used for curvilinear grids but they may not be optimal and may require tuning the time step for each problem by adding a safety factor.  Thus, automatic time step selection methods based on some error estimates become very relevant for curvilinear grids. Error based time stepping methods are already developed for ODE solvers;  and by using a method of lines approach to convert partial differential equations to a system of ordinary differential equations, error-based time stepping schemes of ODE solvers have been applied to partial differential equations~{\cite{berzins1995,ketcheson2020,ware1995}} and recent application to CFD problems can be found in~{\cite{Ranocha2021,ranocha2023}}. The LWFR scheme makes use of a Taylor expansion in time of the time averaged flux; by truncating the Taylor expansion at one order lower, we can obtain two levels of approximation, whose difference is used as a local error indicator to adapt the time step. As a consequence the user does not need to specify a CFL number, but only needs to give some error tolerances based on which the time step is automatically decreased or increased.

The rest of the paper is organized as follows. In Section~\ref{sec:curvilinear.coords}, we review notations and the transformation of conservation laws from curved elements to a reference cube following~{\cite{Kopriva2006,kopriva2009}}. In Section~\ref{sec:cons.lw}, the LWFR scheme of~{\cite{babbar2022}} is extended to curvilinear grids. In Section~\ref{sec:fr}, we review FR on curvilinear grids and use it to construct LWFR on curvilinear grids in Section~\ref{sec:lwfr.curved}. Section~\ref{sec:free.stream.lwfr} shows that the free stream preservation condition of LWFR is the standard metric identity of~{\cite{Kopriva2006}}. In Section~\ref{sec:amr}, the Mortar Element Method for treatment of non-conformal interfaces in AMR of~{\cite{Kopriva1996}} is extended to LWFR. In Section~\ref{sec:time.stepping}, error-based time stepping methods are discussed; Section~\ref{sec:rk.error.section} reviews error-based time stepping methods for Runge-Kutta and Section~\ref{sec:error.lw} introduces an embedded error-based time stepping method for LWFR. In Section~\ref{sec:numerical.results}, numerical results are shown to demonstrate the scheme's capability of handling adaptively refined curved meshes and benefits of error-based time stepping. Section~\ref{sec:conclusions} gives a summary and draws conclusions from the work. In \correction{Appendix}~\ref{sec:shock.capturing}, the admissibility preserving subcell limiter for LWFR from~{\cite{babbar2023admissibility}} is reviewed and extended to curvilinear grids.

\section{Conservation laws and curvilinear grids}\label{sec:curvilinear.coords}

The developments in this work are applicable to a wide class of hyperbolic conservation laws but the numerical experiments are performed on 2-D compressible Euler's equations, which are a system of conservation laws given by
\begin{equation}
\label{eq:2deuler} \pd{}{t}  \left(\begin{array}{c}
\rho\\
\rho u\\
\rho v\\
E
\end{array}\right) + \pd{}{x}  \left(\begin{array}{c}
\rho u\\
p + \rho u^2\\
\rho uv\\
(E + p) u
\end{array}\right) + \pd{}{y}  \left(\begin{array}{c}
\rho v\\
\rho uv\\
p + \rho v^2\\
(E + p) v
\end{array}\right) = \bzero
\end{equation}
Here, $\rho, p$ and $E$ denote the density, pressure and total energy per unit
volume of the gas, respectively and $(u, v)$ are Cartesian components of the
fluid velocity. For a polytropic gas, an equation of state $E = E (\rho, u, v,
p)$ which leads to a closed system is given by
\begin{equation}
\label{eq:2dstate} E = \frac{p}{\gamma - 1} +
\frac{1}{2} \rho (u^2 + v^2)
\end{equation}
where $\gamma > 1$ is the adiabatic constant. For the sake of simplicity and
generality, we subsequently explain the development of the algorithms for a general hyperbolic conservation law written as
\begin{equation}
\uu_t + \Div \pf ( \uu ) = \bzero \label{eq:con.law}
\end{equation}
where $\uu \in \mathbb{R}^p$ is the vector of conserved quantities, $\pf
( \uu )=(\pf_1, \ldots, \pf_d) \in \mathbb{R}^{p \times d}$ is the corresponding physical flux, $\bx$ is in domain
$\Omega \subset \mathbb{R}^d$ and
\begin{equation}
\Div \pf = \sum_{i = 1}^d \partial_{x_i}  \pf_i \label{eq:defn1.div}
\end{equation}
Let us partition $\Omega$ into $M$ non-overlapping quadrilateral/hexahedral elements $\Omega_e$ such that
\[ \Omega = \bigcup_{e = 1}^M \Omega_e \]
The elements $\Omega_e$ are allowed to have curved boundaries in order to match curved boundaries of the problem domain $\Omega$. To construct the numerical approximation, we map each element $\Omega_e$ to a reference element $\Oo = [-1,1]^d$ by a bijective map $\Theta_e : \Oo \to \Omega_e$
\[
\bx = \Theta_e ( \vxi )
\]
where  $\vxi = ( \xii )_{i = 1}^d$ are the coordinates in the reference element,
and the subscript $e$ will usually be suppressed. We will denote a $d$-dimensional
multi-index as $\bp = (p_i)_{i = 1}^d$. In this work, the reference map is
defined using tensor product Lagrange interpolation of degree $N \ge 1$,
\begin{equation}
\Theta ( \vxi ) = \sum_{\bp \in \Nnd} \widehat{\bx}_{\bp}
\ell_{\bp} ( \vxi ) \label{eq:reference.map}
\end{equation}
where
\begin{equation}
\Nnd = \left\{ \bp = (p_1, \ldots, p_d) \ : \ p_i \in \{ 0, 1, \ldots, N \}, 1 \leq i \leq d \right\}
\label{eq:Nnd}
\end{equation}
and $\left\{ \ell_{\bp} \right\}_{\bp \in \Nnd}$ is the degree $N$ Lagrange
polynomial corresponding to the Gauss-Legendre-Lobatto (GLL) points $\left\{ \vxi_{\bp} \right\}_{\bp \in \Nnd}$ so that $\Theta ( \vxi_{\bp} ) = \widehat{\bx}_{\bp}$
for all $\bp \in \Nnd$. Thus, the points $\left\{ \vxi_{\bp} \right\}_{\bp \in
\Nnd}$ are where the reference map will be specified and they will also be taken to
be the solution points of the Flux Reconstruction scheme throughout this work.
The functions $\left\{ \ell_{\bp} \right\}_{\bp \in \Nnd}$ can be written as a tensor product
of the 1-D Lagrange polynomials $\{ \ell_{p_i} \}_{p_i \subindex 0}^N$ of degree $N$
corresponding to the GLL points $\{ \xi_{p_i} \}_{p_i = 0}^N$
\begin{equation}
\ell_{\bp} ( \vxi ) = \prod_{i = 1}^d \ell_{p_i} ( \xii
), \qquad
\ell_{p_i}(\xi^i) = \prod_{k=0, k \ne i}^N  \frac{\xi^i - \xi_{p_k}}{\xi_{p_i} - \xi_{p_k}}
\label{eq:lagrange.basis}
\end{equation}

The numerical approximation of the conservation law will be developed by first transforming the PDE in terms of the coordinates of the reference cell. To do this, we need to introduce covariant and contravariant basis vectors with respect to the reference coordinates.

\begin{definition}[Covariant basis]\label{defn:covariant.basis}
The coordinate basis vectors $\left\{ \ba_i \right\}_{i = 1}^d$ are defined
so that $\ba_i, \ba_j$ are tangent to $\{ \xi^k = \tmop{const} \}$ where $i,
j, k$ are cyclic. They are explicitly given as
\begin{equation}
\ba_i = (a_{i,1}, \ldots, a_{i,d}) = \pdv{\bx}{\xii}, \qquad 1 \leq i \leq d
\end{equation}
\end{definition}

\begin{definition}[Contravariant basis] \label{defn:contravariant.basis}
The contravariant basis vectors $\left\{ \ba^i \right\}_{i = 1}^d$ are the respective normal vectors to the coordinate planes $\{ \xi_i = \tmop{const} \}_{i = 1}^3$. They are explicitly given as
\begin{equation}
\ba^i = (a^i_1, \ldots, a^i_d) = \nabla_{\bx}  \xii, \qquad 1 \leq i \leq d
\end{equation}
\end{definition}

The covariant basis vectors $\ba_i$ can be computed by differentiating the reference map $\Theta(\xi)$. The contravariant basis vectors can be computed using~{\cite{Kopriva2006,kopriva2009}}
\begin{equation}
J \ba^i = J \nabla \xii = \ba_j \times \ba_k
\label{eq:contravariant.identity}
\end{equation}
where $( \ijkc )$ are cyclic, and $J$ denotes the Jacobian of the
transformation which also satisfies
\[
J = \textrm{det} \left[ \pdv{\bx}{\vxi} \right] = \ba_i \cdot ( \ba_j \times \ba_k ) \qquad (i, j, k) \text{ cyclic}
\]
The divergence of a flux vector can be computed in reference coordinates using the contravariant basis vectors
as~{\cite{Kopriva2006,kopriva2009}}
\begin{equation}
\Div \pf = \frac{1}{J}  \sum_{i = 1}^d \pdv{}{\xii}  ( J \ba^i \cdot
\pf ) \label{eq:divergence.transform.contravariant}
\end{equation}
Consequently, the gradient of a scalar function $\phi$ becomes
\begin{equation}
\nabla \phi = \frac{1}{J}  \sum_{i = 1}^d \pdv{}{\xii}  [ ( J \ba^i
) \phi ] \label{eq:gradient.transform.contravariant}
\end{equation}
Within each element $\Omega_e$, performing change of variables with the
reference map $\Theta_e$~\eqref{eq:divergence.transform.contravariant}, the
transformed conservation law is given by
\begin{equation}
\tu_t + \nabla_{\vxi} \cdot \tf = \bzero
\label{eq:transformed.conservation.law}
\end{equation}
where
\begin{equation}
\label{eq:contravariant.flux.defn}
\tu  = J \uu, \qquad
\tf^i  = J \ba^i \cdot \pf = \sum_{n = 1}^d Ja_n^i  \pf_n
\end{equation}
The flux $\tf$ is referred to as the contravariant flux.

The vectors $\{ J \ba^i \}_{i \subindex 1}^d$ are called the metric terms
and the metric identity is given by
\begin{equation}
\sum_{i = 1}^d \pdv{(J \ba^i)}{\xii} = \bzero
\label{eq:metric.identity.contravariant}
\end{equation}
The metric identity can be obtained by reasoning that the gradient of a
constant function is zero and
using~\eqref{eq:gradient.transform.contravariant} or that a constant solution
must remain constant in~\eqref{eq:transformed.conservation.law}. The metric
identity is crucial for studying free stream preservation of a
numerical scheme.

\begin{remark}
The equations for two dimensional case can be obtained by setting $( \Theta ( \vxi ) )_3 = x_3 ( \vxi
) = \xithree$ so that $\ba_3 = (0, 0, 1)$.
\end{remark}

\section{Lax-Wendroff Flux Reconstruction (LWFR) on curvilinear
grids}\label{sec:cons.lw}

The solution of the conservation law will be approximated by piecewise polynomial functions which are allowed to be discontinuous across the elements. In each element $\Omega_e$, the solution is approximated by
\begin{equation}
\widehat{\uu}_e^{\delta} ( \vxi ) =  \sum_{\bp} \uebp
\ell_{\bp} ( \vxi )
\end{equation}
where the $\ell_{\bp}$ are tensor-product polynomials of degree $N$ which have been already introduced before to define the map to the reference element. The hat will be used to denote functions written in terms of the reference coordinates and the delta denotes functions which are possibly discontinuous across the element boundaries. Note that the coefficients $\uebp$ are the values of the function at the solution points which are GLL points.

\subsection{Flux Reconstruction (FR)}\label{sec:fr}

Recall that we defined the multi-index $\bp = (p_i)_{i = 1}^d$~\eqref{eq:Nnd}
where $p_i \in \{ 0, 1 \ldots, N \}$. Let $i \in \{ 1, \ldots, d \}$ denote a
coordinate direction and $S \in \{ L, R \}$ so that $(S, i)$ corresponds to
the face $\Ois$ in direction $i$ on side $S$ which has the reference outward
normal $\bnr_{S, i}$, see Figure~\ref{fig:ref.map}. Thus, $\Oip$ denotes the
face where reference outward normal is $\bnr_{R, i} = \be_i$ and $\Oim$ has
outward unit normal $\bnr_{L, i} = - \bnr_{R, i}$.

The FR scheme is a collocation scheme at each of the solution points $\left\{
\vxi_{\bp} = (\xi_{p_i})_{i = 1}^d, p_i = 0, \ldots, N \right\}$. We will thus
explain the scheme for a fixed $\vxi = \vxi_{\bp}$ and denote $\vxi_i^S$ as
the projection of {\vxi} to the face $S = L, R$ in the $i^{\tmop{th}}$
direction (see Figure~\ref{fig:ref.map}), i.e.,
\begin{equation}
( \vxi_i^S )_j = \left\{\begin{array}{lll}
\xi_j, & \qquad & j \neq i\\
- 1, &  & j = i, S = L\\
+ 1, &  & j = i, S = R
\end{array}\right. \label{eq:xis.notation}
\end{equation}
\begin{figure}
\centering
{\includegraphics[width = 0.8\textwidth]{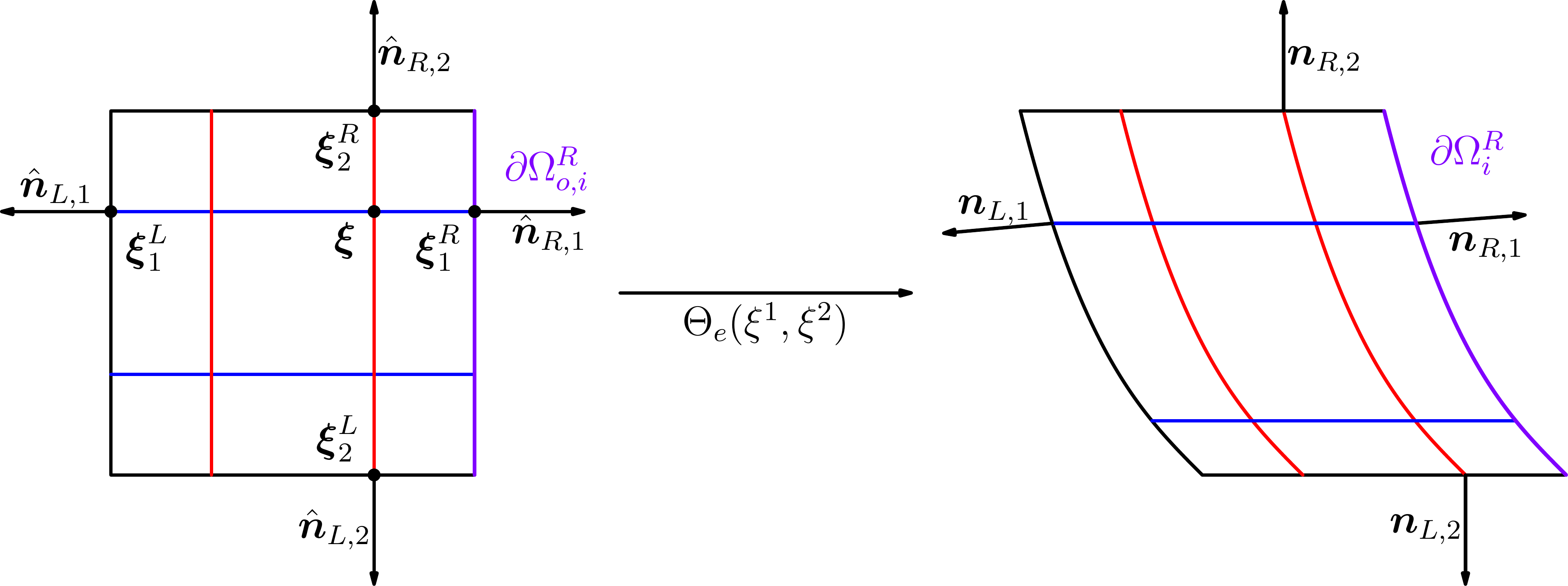}}
\caption{\label{fig:ref.map}Illustration of reference map, solution point
projections, reference and physical normals}
\end{figure}

The first step is to construct an approximation to the flux by interpolating at the solution points
\begin{equation}
( \tf_e^{\delta} )_i ( \vxi ) =  \sum_{\bp}
( J \ba^i \cdot {\pf} ) ( \vxi_{\bp}
) \ell_{\bp} ( \vxi )
\label{eq:flux.poly.defn}
\end{equation}
which may be discontinuous across
the element interfaces. In order to couple the neighbouring elements and ensure conservation property,
continuity of the normal flux at the interfaces is enforced by constructing
the \tmtextit{continuous flux approximation} using the FR correction functions
$g_L, g_R$~{\cite{Huynh2007}}. We construct this for the contravariant flux
$\tf^{\delta}$~\eqref{eq:flux.poly.defn} by performing correction along
each direction $i$,
\begin{equation}
( \tf_e ( \vxi ) )^i = ( \tf_e^{\delta} (
\vxi ) )^i + ( ( \tf_e \cdot \bnr_{R,i} )^{\ast} -
\tf_e^{\delta} \cdot \bnr_{R,i} ) ( \vxi_i^R ) g_R (\xi_{p_i})
- ( ( \tf_e \cdot \bnr_{L,i} )^{\ast} - \tf_e^{\delta} \cdot
\bnr_{L,i} ) ( \vxi_i^L ) g_L (\xi_{p_i})
\label{eq:cts.num.flux}
\end{equation}
where $\tf_e \cdot \bnr_{S,i} ( \vxi_i^S )$ denotes the trace value of
the normal flux in element $\Omega_e$ and $( \tf_e \cdot \bnr_i )^{\ast}
( \vxi_i^S )$ denotes the numerical flux. We will use Rusanov's numerical
flux~{\cite{Rusanov1962}} which for the face $(S, i)$ is given by
\begin{equation}
( \tf_e \cdot \bnr_{S, i} )^{\ast} = \tf^{\ast} ( \uu^-_{S,
i}, \uu^+_{S, i}, \bnr_{S, i} ) = \frac{1}{2}  [ (
\tf^{\delta} \cdot \bnr_{S, i} )^+ + ( \tf^{\delta} \cdot
\bnr_{S, i} )^- ] - \frac{\lambda_{S, i}}{2}  ( \uu^+_{S,
i} - \uu^-_{S, i} ) \label{eq:rusanov.flux}
\end{equation}
The $( \tf^{\delta} \cdot \bn_{S, i} )^{\pm}$ and $\uu_{S,
i}^{\pm}$ denote the trace values of the normal flux and solution from outer,
inner directions respectively; the inner direction corresponds to the element
$\Omega_e$ while the outer direction corresponds to its neighbour across the
interface $(S, i)$. The $\lambda_{S, i}$ is a local wave speed estimate at the
interface $(S, i)$. For compressbile Euler's equations~\eqref{eq:2deuler},
the wave speed is estimated as~\cite{Ranocha2022}
\begin{equation*}
 \lambda = \max (| v^- |, | v^+ |) + \max (| c^- |, | c^+ |),  \qquad
 v^{\pm} = \bv \cdot \bn^{\pm}, \qquad c^{\pm} = \sqrt{\gamma p^{\pm} /
\rho^{\pm}}
\end{equation*}
where $\bn$ is the physical unit normal at the interface. The FR correction
functions $g_L, g_R$ in the degree $N+1$ polynomial space $\polyP_{N + 1}$ are a crucial ingredient of the FR scheme and have the property
\[ g_L (- 1) = g_R (1) = 1, \qquad g_L (1) = g_R (- 1) = 0 \]
Reference~\cite{Huynh2007} gives a discussion on how the choice of correction
functions leads to equivalence between FR and variants of DG scheme. In this
work, the correction functions known as $g_2$ or $g_{\tmop{HU}}$
from~{\cite{Huynh2007}} are used since along with Gauss-Legendre-Lobatto
(GLL) solution points, they lead to an FR scheme which is equivalent to a DG scheme using the
same GLL solution and quadrature points. Once the continuous flux
approximation is obtained, the FR scheme is given by
\begin{equation}
\dv{\uebp^{\delta}}{t} + \frac{1}{J_{e, \bp}} \nabla_{\vxi} \cdot \tf_e
( \vxi_{\bp} ) = \bzero, \qquad \forall\bp \label{eq:semi.discrete.fr}
\end{equation}
where $J_{e, \bp}$ is the Jacobian of the transformation at solution points $\bx_{e,
\bp}$. The FR scheme is explicitly written as
\begin{equation}
\label{eq:fr.update.curvilinear}
\begin{split}
\dv{\uu_{e, \bp}^{\delta}}{t} &+ \frac{1}{J_{e, \bp}} \nabla_{\vxi} \cdot
\tf_e^{\delta} ( \vxi ) \\
&+ \frac{1}{J_{e, \bp}}  \sum_{i = 1}^d ( ( \tf_e \cdot
\bnr_{R,i} )^{\ast} - \tf_e^{\delta} \cdot \bnr_{R,i} ) ( \vxi_i^R
) g_R' (\xi_{p_i}) - ( ( \tf_e \cdot \bnr_{L,i} )^{\ast} -
\tf_e^{\delta} \cdot \bnr_{L,i} ) ( \vxi_i^L ) g_L' (\xi_{p_i})
= \bzero
\end{split}
\end{equation}

\subsection{Lax-Wendroff Flux Reconstruction (LWFR)}\label{sec:lwfr.curved}

The LWFR scheme is obtained by following the Lax-Wendroff procedure for
Cartesian domains~\cite{babbar2022} on the transformed
equation~\eqref{eq:transformed.conservation.law}. With $\uu^n$ denoting the
solution at time level $t = t_n$, the solution at the next time level can be
written using Taylor expansion in time as
\[ \uu^{n + 1} = \uu^n + \sum_{k \subindex 1}^{N+1} \frac{\Delta t^k}{k!}
\partial_t^{(k)}  \uu^n + O (\Delta t^{N + 2}) \]
where $N$ is the solution polynomial degree. Then, use $\uu_t = - \frac{1}{J}
\nabla_{\vxi} \cdot \tf$ from~\eqref{eq:transformed.conservation.law} to swap
a temporal derivative with a spatial derivative and retaining terms upto $O
(\Delta t^{N + 1})$
\[ \uu^{n + 1} = \uu^n - \frac{1}{J}  \sum_{k = 1}^{N+1} \frac{\Delta
t^k}{k!} \partial_t^{(k - 1)}  ( \nabla_{\vxi} \cdot \tf ) \]
Shifting indices and writing in a conservative form
\begin{equation}
\uu^{n + 1} = \uu^n - \frac{\Delta t}{J} \nabla_{\vxi} \cdot \tF
\label{eq:lw.update}
\end{equation}
where $\tF$ is a time averaged approximation of the contravaraint flux $\tf$
given by
\begin{equation}
\tF = \sum_{k = 0}^N \frac{\Delta t^k}{(k + 1) !} \partial_t^k  \tf
\approx \frac{1}{\Delta t}  \int_{t^n}^{t^{n + 1}} \tf  \ud t
\label{eq:time.averaged.flux}
\end{equation}
We first construct an element local order $N + 1$ approximation $\tF^{\delta}_e$ to
$\tF$~(Section~\ref{sec:alwp})
\[
{\tF}^\delta_e(\vxi) = \sum_{\bp} \tF_{e,\bp} \ell_{\bp} ( \vxi )
\]
and which will be in general discontinuous across the element
interfaces. Then, we construct the \tmtextit{continuous time averaged flux
approximation} by performing a correction along each direction $i$, analogous to the case of FR~\eqref{eq:cts.num.flux}, leading to
\begin{equation}
( \tF_e ( \vxi ) )^i = ( \tF_e^{\delta} (
\vxi ) )^i + ( ( \tF_e \cdot \bnr_{R,i} )^{\ast} -
\tF_e^{\delta} \cdot \bnr_{R,i} ) ( \vxi_i^R ) g_R (\xi_{p_i})
- ( ( \tF_e \cdot \bnr_{L,i} )^{\ast} - \tF_e^{\delta} \cdot
\bnr_{L,i} ) ( \vxi_i^L ) g_L (\xi_{p_i})
\label{eq:cts.time.avg.flux}
\end{equation}
where, as in~\eqref{eq:rusanov.flux}, the numerical flux~$( \tF_e \cdot
\bnr_{S, i} )^{\ast}$ is an approximation to the time average flux and
is computed by a Rusanov-type approximation,
\begin{equation}
( \tF_e \cdot \widehat{\bn}_{S, i} )^{\ast} = \frac{1}{2}  [
( \tF^{\delta} \cdot \bnr_{S, i} )^+ + ( \tF^{\delta} \cdot
\bnr_{S, i} )^- ] - \frac{\lambda_{S, i}}{2}  ( \uU^+_{S,
i} - \uU^-_{S, i} ) \label{eq:rusanov.flux.lw}
\end{equation}
where $\uU$ is the approximation of time average solution given by
\begin{equation}
\uU = \sum_{k = 0}^N \frac{\Delta t^k}{(k + 1) !} \partial_t^k  \uu
\approx \frac{1}{\Delta t}  \int_{t^n}^{t^{n + 1}} \uu  \ud t
\end{equation}
The computation of dissipative part of~\eqref{eq:rusanov.flux.lw} using the
time averaged solution instead of the solution at time $t_n$ was introduced
in~{\cite{babbar2022}} and was termed D2 dissipation. It is a natural choice
in approximating the time averaged numerical flux and does not add any
significant computational cost because the temporal derivatives of $\uu$ are
already available when computing the local approximation $\tF^{\delta}$. The
choice of D2 dissipation reduces to an upwind scheme in case of constant
advection equation and leads to enhanced Fourier CFL stability
limit~{\cite{babbar2022}}.

The Lax-Wendroff update is performed following~\eqref{eq:semi.discrete.fr}
for~\eqref{eq:lw.update}
\[ \uebp^{n + 1} = \uebp^n - \frac{\Delta t}{J_{e, \bp}} \nabla_{\vxi}
\cdot \tF_e ( \vxi_{\bp} )  \]
which can be explicitly written as
\begin{equation}
\label{eq:lwfr.update.curvilinear}
\begin{split}
\uebp^{n + 1} = \uebp^n & - \frac{\Delta t}{J_{e, \bp}}
\nabla_{\vxi} \cdot \tF^{\delta}_e ( \vxi_{\bp} ) \\
& - \frac{\Delta t}{J_{e, \bp}}  \sum_{i = 1}^d ( (
\tF_e \cdot \bnr_{R,i} )^{\ast} - \tF^{\delta}_e \cdot \bnr_{R,i} )
( \vxi_i^R ) g_R' (\xi_{p_i}) -
 ( ( \tF_e \cdot \bnr_{L,i} )^{\ast} -
\tF^{\delta}_e \cdot \bnr_{L,i} ) ( \vxi_i^L ) g_L' (\xi_{p_i})
\end{split}
\end{equation}
By multiplying~\eqref{eq:lwfr.update.curvilinear} by quadrature weights $J_{e,
\bp} w_{\bp}$ and summing over $\bp$, it is easy to see that the scheme is
conservative (see Appendix~\ref{sec:app.lwfr.conservative}) in the sense that
\begin{equation}
\overline{\uu}_e^{n + 1} = \overline{\uu}_e^n - \frac{\Delta t}{|
\Omega_e |}  \left( \sum_{i = 1}^d \int_{\Oip} ( \tF_e \cdot
\bnr_{R,i} )^{\ast}  \ud S_{\vxi} + \int_{\Oim} ( \tF_e
\cdot \bnr_{L,i} )^{\ast}  \ud S_{\vxi} \right)
\label{eq:conservation.lw}
\end{equation}
where the element mean value $\overline{\uu}_e$ is defined to be
\begin{equation}
\overline{\uu}_e = \frac{1}{| \Omega_e |}  \sum_{\bp} \uebp J_{e, \bp}
w_{\bp}
\label{defn.mean}
\end{equation}

\correction{The LWFR scheme~\eqref{eq:lwfr.update.curvilinear} gives a high order method for smooth problems, but there are many practical problems involving hyperbolic conservation laws that consist of non-smooth solutions containing shocks and other discontinuities. In such situations, using a higher order method is bound to produce Gibbs oscillations~{\cite{godunov1959}}. The cure is to non-linearly add dissipation in regions where the solution is non-smooth, with methods like artificial viscosity, limiters and switching to a robust lower order scheme; the resultant scheme will be non-linear even for linear equations. In this work, we use the blending scheme for LWFR proposed in~{\cite{babbar2023admissibility}} for Gauss-Legendre solution points. In order to be compatible with
\tmverbatim{Trixi.jl}~{\cite{Ranocha2022}} and make use of this excellent code, we introduce LWFR with blending
scheme for Gauss-Legendre-Lobatto solution points, which are also used in \tmverbatim{Trixi.jl}. As
in~{\cite{babbar2023admissibility}}, the blending scheme has to be constructed
to be provably admissibility preserving~(Definition~\ref{defn:adm.pres}). Since most of the description
is similar to~{\cite{babbar2023admissibility}}, we keep the details of the blending scheme
 in Appendix~\ref{sec:shock.capturing}.}

\subsubsection{Approximate Lax-Wendroff procedure}\label{sec:alwp}

We now illustrate how to approximate the time average flux at the solution points $\tF_{e,\bp}$ which is required to construct the element local approximation $\tF_e^\delta(\vxi)$ using the approximate Lax-Wendroff procedure~{\cite{Zorio2017}}. For $N = 1$,~\eqref{eq:time.averaged.flux} requires $\partial_t  \tf$ which is approximated as
\begin{equation}
\partial_t  \tf^{\delta}(\vxi_{\bp}) = \frac{\tf ( \uu_{\bp} + \Delta t (\uu_t)_{\bp} )
- \tf ( \uu_{\bp} - \Delta t (\uu_t)_{\bp} )}{2 \Delta t}
\label{eq:ft.defn}
\end{equation}
where element index $e$ is suppressed as all these operations are local to each element. The time index is also suppres\correction{s}ed as all quantities are used from time level $t_n$. The $\uu_t$ above is approximated using~\eqref{eq:transformed.conservation.law}
\begin{equation}
(\uu_t)_{\bp} = - \frac{1}{J_{\bp}} \nabla_{\vxi} \cdot \tf^{\delta}(\vxi_{\bp}) \label{eq:ut.defn}
\end{equation}
where $\tf^{\delta}_e$ is the cell local approximation to the flux $\tf$ given
in~\eqref{eq:flux.poly.defn}. For $N = 2$,~\eqref{eq:time.averaged.flux}
additionally requires $\partial_{t \nocomma t}  \tf$
\[
\partial_{t t}  \tf^\delta(\vxi_{\bp}) = \frac{1}{\Delta t^2}  \left[ \tf
\left( \uu_{\bp} + \Delta t (\uu_t)_{\bp} + \frac{\Delta t}{2}  (\uu_{tt})_{\bp} \right) - 2 \tf ( \uu_{\bp} ) + \tf \left( \uu_{\bp} -
\Delta t (\uu_t)_{\bp} + \frac{\Delta t}{2}  (\uu_{tt})_{\bp} \right) \right]
\]
where the element index $e$ is again suppressed. We approximate $\uu_{t \nocomma t}$ as
\begin{equation}
(\uu_{tt})_{\bp} = - \frac{1}{J_{\bp}} \nabla_{\vxi} \cdot \partial_t \tf^{\delta}(\vxi_{\bp})\label{eq:utt.defn}
\end{equation}
The procedure for other degrees will be similar and the derivatives $\nabla_\vxi$ are computed using a differentiation matrix. The implementation can be made efficient by accounting for cancellations of $\Delta t$ terms. Since this step is similar to that on Cartesian grids, the reader is referred to Section 4 of~{\cite{babbar2022}} for more details.

\subsection{Free stream preservation for LWFR}\label{sec:free.stream.lwfr}

Since the divergence in a Flux Reconstruction (FR) scheme~\eqref{eq:fr.update.curvilinear} is computed as the derivative of a polynomial, the following metric identity is required for our scheme to preserve a constant state
\begin{equation}
\sum_{i = 1}^d \pdv{}{\xii}  \InI ( J \ba^i ) = \bzero
\label{eq:metric.identity.contravariant.inter}
\end{equation}
where $\InI$ is the degree $N$ interpolation operator defined as
\begin{equation}
\InI (f)(\vxi) = \sum_{\bp} \ell_{\bp} ( \vxi ) f (
\vxi_{\bp} ) \label{eq:interpolation.defn}
\end{equation}
The study of free-stream preservation was made in~{\cite{Kopriva2006}} showing that
satisfying~\eqref{eq:metric.identity.contravariant.inter} gives free stream preservation. However, it was also shown that the identities impose additional constraints on the degree of the reference map $\Theta$. The remedy given in~\eqref{eq:metric.identity.contravariant.inter} is to replace the metric terms $J \ba^i$ by a different degree $N$ approximation $\II ( J \ba^i
)$ so that~\eqref{eq:metric.identity.contravariant.inter} reduces to
\begin{equation}
\sum_{i = 1}^d \pdv{}{\xii}  \InI  \II ( J \ba^i ) = \sum_{i =
1}^d \pdv{}{\xii}  \II ( J \ba^i ) = \bzero
\label{eq:metric.identity.contravariant.poly}
\end{equation}
In~{\cite{Kopriva2006}}, choices of $\II$ like the \tmtextit{conservative curl
form} were proposed which
ensured~\eqref{eq:metric.identity.contravariant.poly} without any additional
constraints on the degree of the reference map $\Theta$. Those choices are
only relevant in 3-D as, in 2-D, they are equivalent to interpolating $\Theta$
\correction{by a degree $N$ polynomial} before computing the metric terms which is the
choice of $\II$ we make in this work by defining the reference map as
in~\eqref{eq:reference.map}.

In this section, we show that the
identities~\eqref{eq:metric.identity.contravariant.inter} are enough to ensure
free stream preservation for LWFR. Throughout this section, we assume that the
mesh is \tmtextit{well-constructed}~{\cite{Kopriva2006}} which is a property
that follows from the natural assumption of global continuity of the reference
map \correction{defined as in~\eqref{eq:reference.map}, see Figure~\ref{fig:well.cons}}.
\begin{figure}
\centering
\begin{tabular}{ccc}
\includegraphics[width=0.315\textwidth]{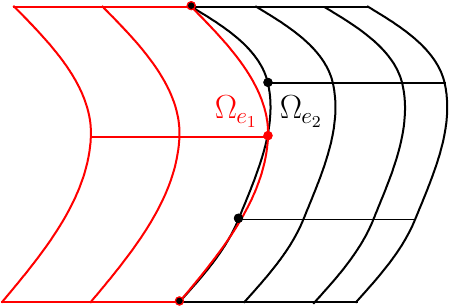} & \includegraphics[width=0.315\textwidth]{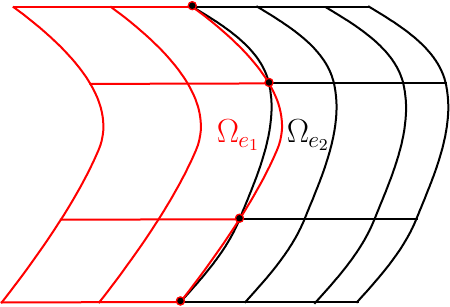} & \includegraphics[width=0.315\textwidth]{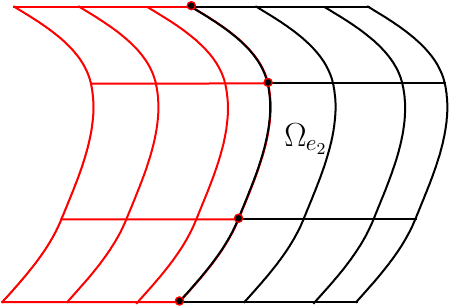}\\
(a) & (b) & (c)
\end{tabular}
\caption{\correction{(a) A non-well-constructed mesh where reference maps do not have the same solution points and polynomial degrees, contradicting~\eqref{eq:reference.map}, (b) a non-well-constructed mesh where reference maps agree at solution points but reference maps are still of different polynomial degrees contradicting~\eqref{eq:reference.map}, (c) a well-constructed mesh.} \label{fig:well.cons}}
\end{figure}
\begin{definition}
\label{defn:well.constructed.mesh}Consider a mesh where element faces in
reference element $\Oo$ are denoted as $\left\{ {\partial \Omega_{o, i}^S}
\right\}$ for coordinate directions $1 \leq i \leq d$ and $S = L / R$ chosen
so that the corresponding reference normals $\left\{ \bnr_{S, i} \right\}$
are $\bnr_{R, i} = \be_i$ and $\bnr_{L, i} = - \bnr_{R, i}$ where $\left\{
\be_i \right\}_{i = 1}^d$ is the Cartesian basis, see
Figure~\ref{fig:ref.map}. The mesh is said to be well-constructed if the
following is satisfied
\begin{equation}
\sum_{m = 1}^d ( \II ( J \ba^m )^+ - \II ( J \ba^m
)^- )  ( \bnr_{S, i} )_m = \bzero \qquad \forall 1
\leq i \leq d, \quad s = L, R \label{eq:well.constructed}
\end{equation}
where $\pm$ are used to denote trace values from $\Oo$ or from the
neighbouring element respectively.
\end{definition}

\begin{remark}
From~\eqref{eq:contravariant.identity}, the
identity~\eqref{eq:well.constructed} can be seen as a property of the
tangential derivatives of the reference map at the faces and is thus
obtained if the reference map is globally continuous. Also, since the unit
normal vector of an element at interface $i$ is given by $J \ba^i / \left\|
J \ba^i \right\|$,~\eqref{eq:well.constructed} also gives us continuity of
the unit normal across interfaces.
\end{remark}

Assuming the current solution is constant in space, $\uu^n = \bc$, we will begin by proving that the approximate time
averaged flux and solution satisfy
\begin{equation}
\tF^{\delta} = \tf^{\delta}(\bc), \qquad \uU = \uu^{\delta} = \bc
\label{eq:time.avg.is.physical.flux}
\end{equation}
For the constant physical flux $\pf ( \bc )$, the contravariant
flux $\tf$ will be
\[ \tf_i = \II ( J \ba^i ) \cdot \pf ( \bc ) = \sum_{n =
1}^d \II (Ja^i_n)  \pf_n ( \bc ) \]
Using~\eqref{eq:ut.defn}, we obtain at each solution point
\begin{align*}
\uu_t & = - \frac{1}{J} \nabla_{\vxi} \cdot \tf^{\delta} = - \frac{1}{J}
\sum_{i = 1}^d \partial_{\xii}  ( \tf^{\delta} )_i\\
& = - \frac{1}{J}  \sum_{i = 1}^d \partial_{\xii}  ( \II ( J
\ba^i ) \cdot \pf ( \bc ) ) = - \frac{1}{J}  \left(
\sum_{i = 1}^d \partial_{\xii}  \II ( J \ba^i ) \right) \cdot \pf
( \bc )\\
& = \bzero
\end{align*}
where the last equality follows from using the metric identity~\eqref{eq:metric.identity.contravariant.inter}. For polynomial degree $N = 1$, recalling \eqref{eq:ft.defn}, this proves that
\[ \partial_t  \tf^{\delta} = \frac{\tf ( \uu + \Delta t \uu_t
) - \tf ( \uu - \Delta t \uu_t )}{2 \Delta t}
= \bzero \]
Thus, we obtain
\[ \tF^{\delta} = \tf^{\delta} + \frac{\Delta t}{2} \partial_t
\tf^{\delta} = \tf^{\delta}, \qquad \uU = \uu^{\delta} + \frac{\Delta
t}{2} \partial_t  \uu^{\delta} = \uu^{\delta} \]
Building on this, for $N = 2$, by \eqref{eq:utt.defn},
\[ \uu_{t \nocomma t} = - \frac{1}{J} \nabla_{\vxi} \cdot \partial_t
\tf^{\delta} = \bzero \]
which will prove $\partial_{t \nocomma t}  \tf^{\delta} = \bzero$ and we
similarly obtain the following for all degrees
\begin{align}
\tF^{\delta} & = \sum_{k = 0}^N \frac{\Delta t^k}{(k + 1) !}
\partial_t^k  \tf^{\delta} = \tf^{\delta} = \left\{ J \ba^i \cdot \pf (
\bc ) \right\}_{i \subindex 1}^d \label{eq:Fisf} \\
\uU & = \sum_{k = 0}^N \frac{\Delta t^k}{(k + 1) !} \partial_t^k
\uu^{\delta} = \uu^{\delta} = \bc \label{eq:Uisu}
\end{align}
To prove free stream preservation, we argue that the
update~\eqref{eq:lwfr.update.curvilinear} vanishes as the volume terms
involving divergence of $\tF^{\delta}$ and the surface terms involving trace
values and numerical flux vanish. By~\eqref{eq:Fisf}, the volume terms
in~\eqref{eq:lwfr.update.curvilinear} are given by
\[ \frac{1}{J} \Delta t \left( \sum_{i = 1}^d \partial_{\xii}  \II
( J \ba^i ) \right) \cdot \pf ( \bc ) \]
and vanish by the metric
identity~\eqref{eq:metric.identity.contravariant.poly}. By~\eqref{eq:Uisu},
the dissipative part of the numerical flux~\eqref{eq:rusanov.flux.lw} is
computed with the constant solution $\uu^n = \bc$ and will thus vanish. For
the central part of the numerical flux, as the mesh is well-constructed
(Definition~\ref{defn:well.constructed.mesh}), the trace values are given by
\[ ( \tF^{\delta} \cdot \bnr_i )^+ = \sum_{m = 1}^d ( \II
( J \ba^m ) \cdot \pf ( \bc ) )^+ ( \bnr_i
)_m = \sum_{m = 1}^d ( \II ( J \ba^m ) \cdot \pf
( \bc ) )^- ( \bnr_i )_m = ( \tF^{\delta}
\cdot \bnr_i )^- \]
Thus, the numerical flux agrees with the physical flux at element interfaces, making
the surface terms in~\eqref{eq:lwfr.update.curvilinear} vanish.

\section{Adaptive mesh refinement}\label{sec:amr}

Adaptive mesh refinement helps resolve flows where the relevant features are localized to certain regions of the physical domain by increasing the mesh resolution in those regions and coarsening in the rest of the domain. In this work, we allow the adaptively refined meshes to be non-conforming, i.e., element neighbours need not have coinciding solution points at the interfaces~(Figure~\ref{fig:refine.coarsen}a). We handle the non-conformality using the \emph{mortar element method} first introduced for hyperbolic PDEs in~{\cite{Kopriva1996}}. {\correction{The procedure is described for general curvilinear meshes defined through element local reference maps~\eqref{eq:reference.map}. During the refinement stage, the reference maps for the finer elements are obtained by interpolation. The metric terms~\eqref{eq:contravariant.identity} and Jacobians are then recomputed.}}

In order to perform the transfer of solution during coarsening and refinement, we introduce some notations and operators. Define the 1-D reference elements
\begin{equation}
I_0 = [- 1, 0], \qquad I_1 = [0, 1], \qquad I = [- 1, 1], \qquad  \Nod = \{
0, 1 \}^d \label{eq:I0.defn}
\end{equation}
and the bijections $\phi_s : I_s \rightarrow I$ for $s = 0, 1$ as
\begin{equation}
\label{eq:submap}
\sphi_0 (\xi) = 2 \xi + 1, \quad \xi \in I_0, \qquad \sphi_1 (\xi) = 2
\xi - 1, \quad \xi \in I_1
\end{equation}
so that the inverse maps $\phi_s^{-1} : I \rightarrow I_s$ are given by
\begin{equation}
\label{eq:submap.inverse}
\sphi_0^{- 1} (\xi) = \frac{\xi - 1}{2}, \quad \xi \in I, \qquad
\sphi_1^{- 1} (\xi) = \frac{\xi + 1}{2}, \quad \xi \in I
\end{equation}
Denoting the 1-D solution points and Lagrange basis for $I$ as $\{ \xi_p \}_{p
\subindex 0}^N$ and $\{ \ell_p (\xi) \}_{p \subindex 0}^N$ respectively, the same
for $I_s$ are given by $\left\{ \sphi_s^{- 1} (\xi_p) \right\}_{p \subindex
0}^N$ and $\left\{ \ell_p ( \sphi_s (\xi) ) \right\}_{p \subindex 0}^N$
respectively. We also define $\qint$ to be integration under quadrature at solution points. Thus,
\[
\qint_I u(\xi) \ud \xi = \sum_{p \subindex 0}^N u(\xi_p) w_p, \qquad \qint_{I_s} u(\xi) \ud \xi = \sum_{p \subindex 0}^N 2 u(\sphi_s^{-1}(\xi_p)) w_p
\]

In order to get the solution point values of the refined
elements, we will perform interpolation. All integrals in this section are
approximated by quadrature at solution points which are the degree $N$
Gauss-Legendre-Lobatto points. The interpolation operator from $I$ to $\{ I_s
\}_{s \subindex 0, 1}$ is given by $\Vi_{\X_s}$ defined as the Vandermonde
matrix corresponding to the Lagrange basis
\begin{equation}
( \Vi_s )_{p \nocomma q} = \ell_q ( \sphi_s^{- 1} (\xi_p)
), \qquad 0 \leq p, q \leq N, \quad s = 0, 1 \label{eq:subint.op}
\end{equation}
For the process of coarsening, we also define the $L^2$ projection operators
$\left\{ \proP^s \right\}_{s \subindex 0, 1}$ which projects a polynomial $u$
defined on the Lagrange basis of $I_s$ to the Lagrange basis of $I$ as
\[
\qint_I \proP^s (u)(\xi) \ell_i (\xi)  \ud \xi = \qint_{I_s} u(\xi) \ell_i (\xi)  \ud
\xi \qquad 0 \leq i \leq N
\]
Approximating the integrals by quadrature on solution points, we obtain the
matrix representations corresponding to the basis
\begin{equation}
\proP^s_{p \nocomma q} = \frac{1}{2}  \frac{w_q}{w_p} \ell_p (
\sphi_s^{- 1} (\xi_q) ) \qquad 0 \leq i, j \leq N, \quad s = 0, 1
\label{eq:proP.defn}
\end{equation}
where $\{ w_p \}_{p \subindex 0}^N$ are the quadrature weights corresponding
to solution points. The transfer of solution during coarsening and refinement is performed by matrix vector operations using the operators~(\ref{eq:subint.op},~\ref{eq:proP.defn}). Thus, the operators~(\ref{eq:subint.op},~\ref{eq:proP.defn}) are stored as matrices for reference element at the beginning of the simulation and reused for the adaptation operations in all elements. Lastly, we introduce the notation of a product of matrix
operators $\{ A_i \}_{i = 1}^d$ acting on $\tmmathbf{b} = ( b_{\bp}
)_{\bp \in \Nnd} = (b_{p_1 \nocomma p_2 \nocomma p_3})_{\bp \in \Nnd}$
as
\begin{equation}
(A_i  \tmmathbf{b})_{\bp} = \sum_{\bq \in \mathbb{N}_N^d} \left( \prod_{i =
1}^d (A_i)_{p_i \nocomma q_i} \right) b_{\bq}
\label{eq:product.of.operators}
\end{equation}

\subsection{Solution transfer between element and subelements}\label{sec:soln.transfer}
\begin{figure}
\centering
\begin{tabular}{c}
\includegraphics[width = 0.5\textwidth] {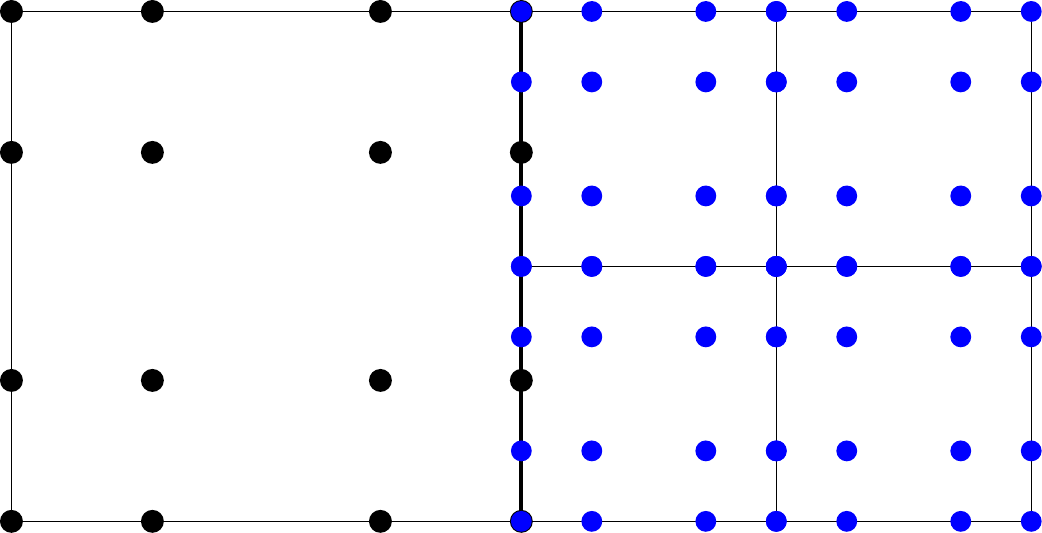} \\
(a) \\
\includegraphics[width = 0.65\textwidth]{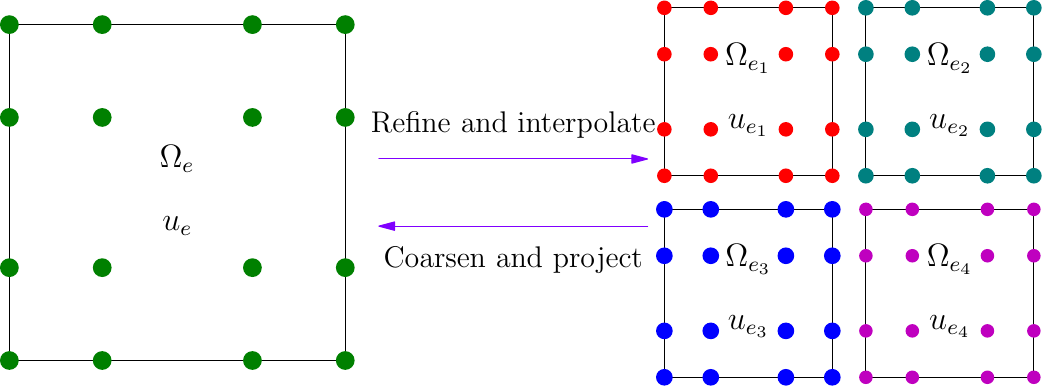} \\
(b)
\end{tabular}
\caption{(a) Neighbouring elements with hanging nodes (b) Illustration of refinement and coarsening \label{fig:refine.coarsen}}
\end{figure}

Corresponding to the element $\Omega_e$, we denote the $2^d$ subdivisions as~(Figure~\ref{fig:refine.coarsen}b)
\[
\Omega_{e_{{\bss}}} =
\Theta_e \left ( \prod_{i = 1}^d I_{s_i} \right ), \qquad \forall \bss \in \Nod
\]
where $I_s$ are defined in~\eqref{eq:I0.defn}. We also define
$\sphi_{\bss} ( \vxi ) = ( \sphi_{s_i} ( \xii )
)_{i = 1}^d$ so that $\sphi_{\bss}$ is a bijection between
$\Omega_{e_{\bss}}$ and $\Omega_e$. Recall that ${\left\{ \ell_{\bp}
\right\}_{\bp \in \Nnd}} $ are Lagrange polynomials of degree $N$ with
variables $\vxi = ( \xii )_{i = 1}^d$. Thus, the reference solution
points and Lagrange basis for $\Omega_{e_{\bss}}$ are given by $\left\{
\sphi_{\bss}^{- 1} ( \vxi_{\bp} ) \right\}_{\bp \in
\mathbb{N}_N^d}$ and ${\left\{ \ell_{\bp} ( \sphi_{\bss} ( \vxi )
) \right\}_{\bp \in \Nnd}} $, respectively. \correction{The conservation property of the LWFR scheme was mentioned in terms of evolution of the element mean approximations~\eqref{defn.mean} in~\eqref{eq:conservation.lw}. This property needs to be maintained during the adaptive mesh refinement, i.e., we must have the following during refinement and coarsening
\[
\sum_{\bss \in \Nod} \qint_{\Omega_{e_{\bss}}} \uu_{e_{\bss}} \ud \bx = \qint_{\Omega_e} \uu_e \ud x
\]
As shall be seen later, in order to maintain conservation,
transfer of solution during refinement and coarsening needs to be performed for the \textit{transformed solution} $\tu$~\eqref{eq:transformed.conservation.law} instead of the physical solution $\uu$.} The respective
representations of solution approximations in $\Omega_e, \Omega_{e_{\bss}}$ in
reference coordinates are thus given by
\begin{equation}
\uu_e (\vxi) = \sum_{\bq \in \Nnd} \ell_{\bq} (\vxi)
\uu_{e, \bq}, \qquad \uu_{e_{\bss}} ( \vxi ) =
\sum_{\bq \in \Nnd} \ell_{\bq} ( \sphi_{\bss} ( \vxi ) )
\uu_{e_{\bss}, \bq} \label{eq:ueues.defn}
\end{equation}
\correction{The respective transformed solutions are given by
\begin{align}
\tu_e (\vxi) &= \sum_{\bq \in \Nnd} \ell_{\bq} (\vxi)
J_{e, \bq} \uu_{e, \bq} =: \sum_{\bq \in \Nnd} \ell_{\bq} (\vxi)
\tu_{e, \bq} \label{eq:tue.defn} \\
\tu_{e_{\bss}} ( \vxi ) &=
\sum_{\bq \in \Nnd} \ell_{\bq} ( \sphi_{\bss} ( \vxi ) )
J_{e_{\bss}, \bq}\uu_{e_{\bss}, \bq} =: \sum_{\bq \in \Nnd} \ell_{\bq} ( \sphi_{\bss} ( \vxi ) )
\tu_{e_{\bss}, \bq} \label{eq:tues.defn}
\end{align}
}

\subsubsection{Interpolation for refinement}

After refining an element $\Omega_e$ into child elements $\left\{ \Omega_{e_{\bss}}
\right\}_{\bss \in \mathbb{N}_1^d}$, the solution $\uu_e$ has to be
interpolated on the solution points of child elements to obtain $\left\{
\uu_{e_{\bss}} \right\}_{\bss \in \mathbb{N}_1^d}$. The scheme will be
specified by writing $\uu_{e_{\bss}, \bq}$ in terms of $\uu_{e, \bq}$, which
were defined in~\eqref{eq:ueues.defn}. \correction{In order to maintain conservation, as a first step, we
construct the polynomial approximation of the transformed solution $\tu_e$ as in~\eqref{eq:tue.defn}.
The interpolation is performed to determine the transformed solution on the refined grid as
\begin{equation}
\begin{split}
\tu_{e_{\bss}, \bp} & = \sum_{\bq \in \Nnd} \ell_{\bq} (
\sphi_{\bss}^{- 1} ( \vxi_{\bp} ) )  \tu_{e, \bq} =
\sum_{\bq \in \Nnd} \left( \prod_{i = 1}^d \ell_{q_i} (
\sphi_{s_i}^{- 1} ( \vxi_{p_i} ) ) \right)  \tu_{e, \bq}
\\
& = \sum_{\bq \in \Nnd} \left( \prod_{i = 1}^d ( \Vi_{\X_{s_i}}
)_{p_i \nocomma q_i} \right)  \tu_{e, \bq}
\end{split} \label{eq:interpolation.ue}
\end{equation}
In the product of operators notation~\eqref{eq:product.of.operators}, the
interpolation can be written as
\[
\tu_{e_{\bss}} = \left( \prod_{i = 1}^d \Vi_{\X_{s_i}} \right)  \tu_e
\]
The solution values $\uu_{e_{\bss}, \bq}$~\eqref{eq:ueues.defn} are then obtained simply as
\[
\uu_{e_{\bss}, \bq} = \frac{1}{J_{e_{\bss}, \bq}} \tu_{e_{\bss}, \bq}
\]
The refinement step maintains solution conservation since
\begin{align*}
\sum_{\bss \in \Nod} \qint_{\Omega_{e_{\bss}}} \uu_{e_{\bss}} \ud \bx = \sum_{\bss \in \Nod} \qint_{\Theta_e^{-1}(\Omega_{e_{\bss}})} \tu_{e_{\bss}} \ud \xi &= \sum_{\bss \in \Nod} \qint_{\Theta_e^{-1}(\Omega_{e_{\bss}})} \tu_e \ud \xi = \qint_{\Omega_e} \uu_e \ud x
\end{align*}
}
\subsubsection{Projection for coarsening}\label{sec:proj.elem}
When $2^d$ elements are joined into one single bigger element $\Omega_e$, the solution transfer is performed using $L^2$ projection of $\left\{ \uu_{{e_{\bss}} } \right\}_{\bss \in \Nod}$ into
$\uu_e$. \correction{As in the case of refinement, projection will be performed through the transformed solution $\tu$ which we construct on the smaller element to obtain $\tu_{\be_{\bss}}$~\eqref{eq:tues.defn}, and then project these to obtain $\tu_{\be}$~\eqref{eq:tue.defn}. The projection is performed to obtain $\tu_e$ to satisfy
\begin{equation}
\sum_{\bss \in \Nod} \qint_{\Omega_{e_{\bss}}} \tu_{e_{\bss}} \ell_{\bp}
( \vxi ) \ud \bx = \qint_{\Omega_e} \tu_e \ell_{\bp} (\vxi)  \ud \bx, \qquad \forall \bp \in \Nnd \label{eq:4to1proj}
\end{equation}
Substituting~(\ref{eq:tue.defn},~\ref{eq:tues.defn}) into~\eqref{eq:4to1proj} gives
\begin{equation}
\sum_{\bss \in \Nod} \sum_{\bq \in \Nnd} \qint_{\Omega_{e_{\bss}}} \ell_{\bp}
( \vxi ) \ell_{\bq} ( \sphi_{\bss} ( \vxi )
)  \tu_{e_{\bss}, \bq}  \ud \bx = \sum_{\bq \in \Nnd} \qint_{\Omega_e}
\ell_{\bp} ( \vxi ) \ell_{\bq} ( \vxi )  \tu_{e, \bq}
\ud \bx \label{eq:pro4.with.lagrange}
\end{equation}
}
Note the 1-D identities
\begin{align*}
\qint_I \ell_p (\xi) \ell_q (\xi)  \ud \xi & = \delta_{p \nocomma q} w_p\\
\qint_{I_s} \ell_p (\xi) \ell_q ( \sphi_s (\xi) ) \ud \xi & =
\frac{1}{2}  \qint_{- 1}^1 \ell_p ( \sphi_s^{- 1} (\xi) ) \ell_q
(\xi)  \ud \xi = \frac{1}{2} \ell_p ( \sphi_s^{- 1} (\xi_q) ) w_q
=\mathcal{P}^s_{p \nocomma q} w_p
\end{align*}
where the projection operator $\left\{ \proP^s \right\}_{s = 0, 1}$ is defined
in~\eqref{eq:proP.defn}. Then, by change of variables, we have the following
\begin{align}
\qint_{\Omega_e} \ell_{\bp} ( \vxi ) \ell_{\bq} ( \vxi
) = J_{e, \bp}  \prod_{i = 1}^d w_{p_i} \delta_{p_i \nocomma q_i}, \qquad
\qint_{\Omega_{e_{\bss}}} \ell_{\bp} ( \vxi ) \ell_{\bq} (
\sphi_{\bss} ( \vxi ) ) = J_{e, \bp}  \prod_{i = 1}^d
w_{p_i} \mathcal{P}^{s_i}_{p_i \nocomma q_i} \label{eq:integral.identities}
\end{align}
Using~\eqref{eq:integral.identities} in~\eqref{eq:pro4.with.lagrange} and dividing both sides by $J_{e, \bp}$ gives
\[
\correction{\tu}_{e, \bp} = \sum_{\bss \in \Nod} \sum_{\bq \in \Nnd} \left( \prod_{i =
1}^d \mathcal{P}^{s_i}_{p_i \nocomma q_i} \right)  \correction{\tu}_{e_{\bss}, \bq} = \sum_{\bss \in \Nod} \left( \prod_{i = 1}^d \mathcal{P}^{s_i} \right)  \uu_{e_{\bss}}
\]
where the last equation follows using the product of operators notation~\eqref{eq:product.of.operators}. \correction{As in the interpolation case, the physical solution values $\uu_e$~\eqref{eq:ueues.defn} are then obtained as
\[
\uu_{e, \bp} = \frac{1}{J_{e, \bp}} \tu_{e, \bp}
\]
To show that refinement maintains conservation, we sum the identity~\eqref{eq:pro4.with.lagrange} over $\bp$ to get
\begin{align*}
\sum_{\bss \in \Nod} \qint_{\Omega_{e_{\bss}}} \tu_{e_{\bss}} \ud \bx = \qint_{\Omega_e} \tu_e \ud \bx
\end{align*}
}
\subsection{Mortar element method (MEM)}\label{sec:handling.mortars}

\subsubsection{Motivation and notation}

\begin{figure}
\centering
\begin{tabular}{c}
\includegraphics[width=0.85\textwidth]{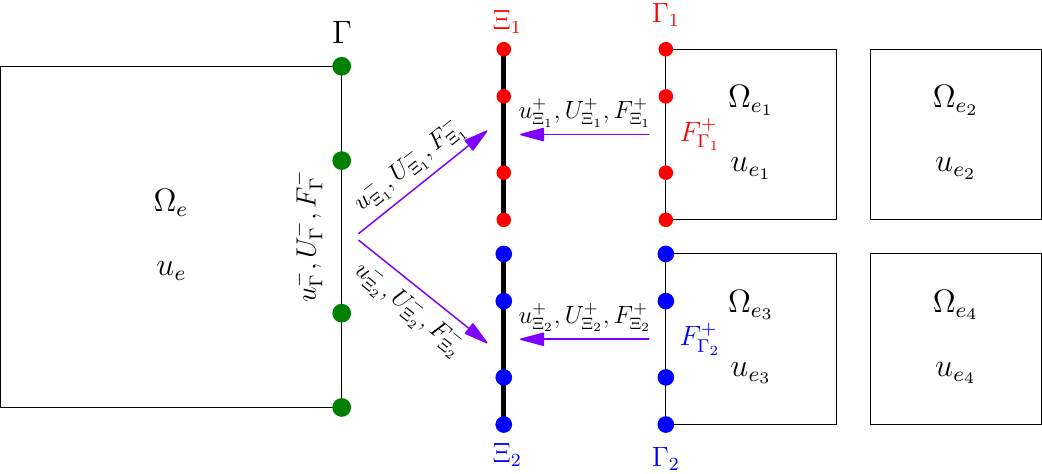}\\
(a)\\
\includegraphics[width=0.85\textwidth]{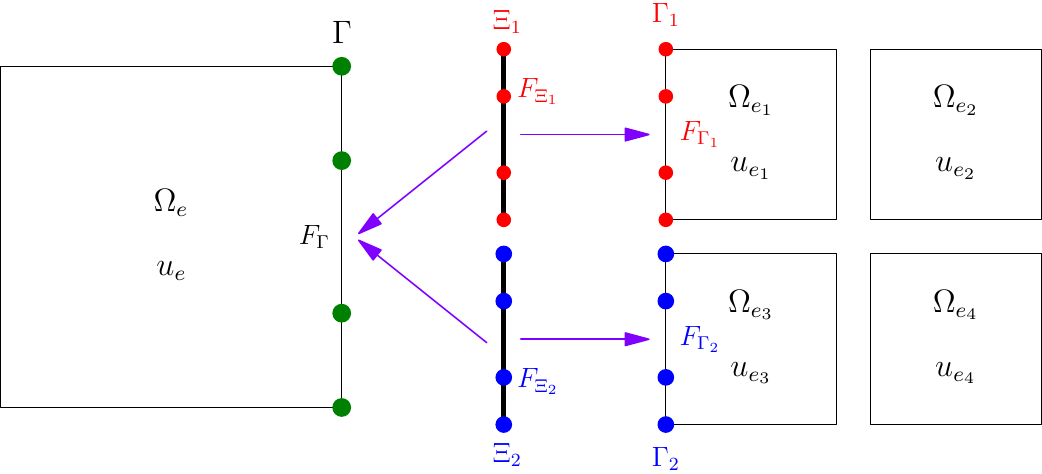}\\
(b)
\end{tabular}
\caption{(a) Prolongation to mortar and computation of numerical flux $\F_{\Xi_1}, \F_{\Xi_2}$, (b) Projection of numerical flux to interfaces\label{fig:mortar}}
\end{figure}

When the mesh is adaptively refined, there will be elements with different refinement levels sharing a face; in this work, we assume that the refinement levels of those elements only differs by 2~(Figure~\ref{fig:refine.coarsen}a). Since the neighbouring elements do not have a common face, the solution points on their faces do not coincide~(Figure~\ref{fig:mortar}). We will use the Mortar Element Method (MEM) for computing the numerical flux at all the required points on such a face, while preserving accuracy and the conservative property~\eqref{eq:conservation.lw}. There are two steps to the method

\begin{enumerate}
\item Prolong~$\tF^{\delta} \cdot \bnr_{S, i}, \uU_{S,
i}, \uu_{S,i}$~\eqref{eq:rusanov.flux.lw} from the neighbouring elements to a set of
common solution points known as mortar solution points~(Figure~\ref{fig:mortar}a).
\item Compute the numerical flux at the mortar solution points as in~\eqref{eq:rusanov.flux.lw} and map it back to the interfaces~(Figure~\ref{fig:mortar}b).
\end{enumerate}
In Sections~\ref{sec:el2mortar}, \ref{sec:proj.face}, we will explain these two steps through the specific case of Figure~\ref{fig:mortar} and we first introduce notations for the same.

Consider the multi-indices $\bss \in \mathbb{N}_1^{d - 1} = \{0, 1\}^{d - 1}$
and the interface in right (positive) $i = 1$ direction of element $\Omega_e$,
denoted as $\G$ (Figure~\ref{fig:mortar}). We assume that the elements
neighbouring $\Omega_e$ at the interface $\Gamma$ are finer and thus we have
non-conforming subinterfaces $\left\{ \G_{\bss} \right\}_{\bss \in
\mathbb{N}_1^{d - 1}}$ which, by continuity of the reference map, can be
written as $\G_{\bss} = \Theta_e ( \{ 1 \} \times \prod_{i = 1}^{d - 1}
I_{s_i} ) = \Theta_e ( \{ 1 \} \times \sphi_{\bss}^{- 1} (I^{d -
1}) )$. Thus, in reference coordinates, $\sphi_{\bss}$~\eqref{eq:submap}
is a bijection from $\Gamma_{\bss}$ to $\Gamma$. The interface $\Gamma$ can be
parametrized as $\by = \bga ( \bet ) = \Theta_e ( 1, \bet
)$ for $\bet \in I^{d - 1}$ and thus the reference variable of interface
is denoted $\bet = \bga^{- 1} ( \by )$. The subinterfaces can also
be written by using the same parametrization so that $\Gamma_{\bss} = \left\{
\gamma ( \bet ) : \bet \in \prod_{i = 1}^{d - 1} I_{s_i} \right\}$.
For the reference solution points on $\G$ being $\left\{ \bet_{\bss}
\right\}_{\bss \in \mathbb{N}_1^{d - 1}}$, the solution points in
$\Gamma_{\bss}$ are respectively given by $\left\{ \sphi_{\bss}^{- 1} (
\bet_{\bp} ) \right\}_{\bp \in N_1^{d - 1}}$ and for $\left\{ \ell_{\bp}
( \bet ) \right\}_{\bp \in \mathbb{N}_N^{d - 1}}$ being Lagrange
polynomials in $\G$, the Lagrange polynomials in $\Gamma_{\bss}$ are given by
$\left\{ \ell_{\bp} ( \sphi_{\bss} ( \bet ) )
\right\}_{\bp \in \mathbb{N}_1^{d - 1}}$ respectively. Since the solution
points between $\G$ and $\G_{\bss}$ do not coincide, they will be mapped to
common solution points in the mortars $\X_{\bss}$ and then back to $\G,
\G_{\bss}$ after computing the common numerical flux. The solution points in $\X_{\bss}$ are actually given by $\left\{ \sphi_{\bss}^{- 1} (
\bet_{\bp} ) \right\}_{\bp \in N_1^{d - 1}}$, i.e., they are the same as $\G_{\bss}$. The quantities with
subscripts $\X_{\bss}^-, \X_{\bss}^+$ will denote trace values from larger,
smaller elements respectively.

\subsubsection{Prolongation to mortars}\label{sec:el2mortar}

We will explain the prolongation procedure for a quantity $\F$ which could be the normal flux $\tF^{\delta} \cdot \bnr_{S, i}$, time average solution $\uU_{S,
i}$ or the solution $\uu_{S,i}$. The first step of MEM of mapping of solution point values from solution points at element interfaces $\G, \G_{\bss}$
to solution points at mortars $\X_{\bss}^-, \X_{\bss}^+$ is known as prolongation. The prolongation of $\{
\F^{\delta}_{\G_{\bss}} \}_{\bss \in \mathbb{N}_1^{d - 1}}$ from small elements $\G_{\bss}$ to mortar values $\{ \F_{\X_{\bss}^+} \}_{\bss \in \mathbb{N}_1^{d -
1}}$ is the identity map since both have the same solution points, and the prolongation of $\F_{\G}^\delta$ from the large interface $\G$ to the $\{\F_{\Xi_{\bss}}^-\}_{\bss \in
\mathbb{N}_1^{d - 1}}$ is an interpolation to the mortar solution points. Accuracy is maintained by the interpolation as the mortar elements are finer. Below, we explain the matrix operations used to perform the interpolation.

The prolongation of $\{
\F^{\delta}_{\G_{\bss}} \}_{\bss \in \mathbb{N}_1^{d - 1}}$ to the
mortar values $\{ \F_{\X_{\bss}^+} \}_{\bss \in \mathbb{N}_1^{d -
1}}$ is the identity map. The $\{ \F_{\Xi_{\bss}}^- \}_{\bss \in
\mathbb{N}_1^{d - 1}}$ in Lagrange basis are given by
\begin{equation}
\F_{\Xi_{\bss}^-} ( \bet ) = \sum_{\bp \in \mathbb{N}_N^{d -
1}} \ell_{\bp} ( \sphi_{\bss} ( \bet ) )
\F_{\X_{\bss}^-, \bp}, \qquad \bet \in \prod_{i = 1}^{d - 1} I_{s_i}
\label{eq:uXidefn}
\end{equation}
The coefficients $\{\F_{\X_{\bss}, \bp}^-\}_{\bp \in
\mathbb{N}_N^{d - 1}}$ are computed by interpolation
\begin{align}
\F_{\X_{\bss}^-, \bp} = \F_{\G^-}  ( \sphi_{\bss}^{- 1} (
\bet_{\bp} ) ) = & \sum_{\bq \in \mathbb{N}_N^{d - 1}}
\ell_{\bq} ( \sphi_{\bss}^{- 1} ( \bet_{\bp} ) )
\F_{\G}^{\delta} ( \bet_{\bq} ) \nonumber
=  \sum_{\bq \in \mathbb{N}_N^{d - 1}} \left( \prod_{i = 1}^{d - 1}
\ell_{q_i} ( \sphi_{s_i}^{- 1} ( \bet_{p_i} ) )
\right )  \F_{\G}^{\delta} ( \bet_{\bq} ) \nonumber\\
= & \sum_{\bq \in \mathbb{N}_N^{d - 1}}
\left( \prod_{i = 1}^{d - 1} ( \Vi_{s_i} )_{p_i \nocomma q_i} \right)  \F_{\G}^{\delta} ( \bet_{\bq} )
\label{eq:final.int.identity.fxi}
\end{align}
where the interpolation operators $\left\{ \Vi_{\X_s} \right\}_{s = 0, 1}$
were defined in~\eqref{eq:subint.op}. Using the product of operators
notation~\eqref{eq:product.of.operators}, we can compactly
write~\eqref{eq:final.int.identity.fxi} as
\begin{equation}
\F_{\X_{\bss}^-} = \left ( \prod_{i = 1}^{d - 1} \Vi_{s_i}
\right )  \F_{\G}^{\delta} \label{eq:compact.prolongation}
\end{equation}
The same procedure is performed for obtaining $\uU_{\X_{\bss}^{\pm}},
\uu_{\X_{\bss}^{\pm}}$. The numerical fluxes $\left\{ \F_{\X_{\bss}}^{\ast}
\right\}_{\bss {\in \mathbb{N}_1^{d - 1}} }$ are then computed as
in~\eqref{eq:rusanov.flux.lw}.

\subsubsection{Projection of numerical fluxes from mortars to
faces}\label{sec:proj.face}

In this section, we use the notation $\F^{\ast} := (\tF_e \cdot \bnr_{S,i} )^{\ast}$ to denote the numerical flux~\eqref{eq:rusanov.flux.lw}. In the second step of MEM, the numerical fluxes $\{ \F^{\ast}_{\X_{\bss}} \}_{\bss {\in \mathbb{N}_1^{d - 1}} }$ computed using values at $\{\Xi^\pm_{\bss}\}_{\bss {\in \mathbb{N}_1^{d - 1}}}$ are mapped back to interfaces $\G_{\bss}, \G$. Since the solution points on $\G_{\bss}$ are the same as those of $\Xi^\pm_{\bss}$, the mapping from $\{ \F^{\ast}_{\X_{\bss}} \}_{\bss {\in \mathbb{N}_1^{d - 1}} }$ to $\{ \F^{\ast}_{\G_{\bss}} \}_{\bss {\in \mathbb{N}_1^{d - 1}} }$ is the identity map. In order to maintain the conservation property, an $L^2$ projection is performed to map all the fluxes $\{ \F^{\ast}_{\X_{\bss}} \}_{\bss {\in \mathbb{N}_1^{d - 1}} }$ into one numerical flux $\F_{\G}^{\ast}$ on the larger interface.

An $L^2$ projection of these fluxes to $\F_{\G}^{\ast}$ on $\G$
is performed as
\begin{equation}
\sum_{\bss \in \mathbb{N}_1^{d - 1}} \qint_{\G_{\bss}} \F_{\X_{\bss}}^{\ast}
\ell_{\bp} = \qint_{\G} \F_{\G}^{\ast} \ell_{\bp}, \qquad \forall \bp \in
\mathbb{N}_N^{d - 1} \label{eq:proj.mortar.int.id}
\end{equation}
where integrals are computed with quadrature at solution points. As
in~\eqref{eq:uXidefn}, we write the mortar fluxes as
\begin{align*}
\F_{\Xi_{\bss}}^{\ast} ( \bet ) & = \sum_{\bq \in
\mathbb{N}_1^{d - 1}} \ell_{\bq} ( \sphi_{\bss} ( \bet )
)  \F_{\X_{\bss}, \bq}^{\ast}, \qquad \bet \in \X_{\bss} \\
\F_{\G}^{\ast} ( \bet ) & = \sum_{\bq \in \mathbb{N}_1^{d - 1}}
\ell_{\bq} ( \bet )  \F^{\ast}_{\G, \bq}, \qquad \bet \in \G
\end{align*}
Thus, the integral identity~\eqref{eq:proj.mortar.int.id} can be written as
\begin{equation}
\sum_{\bss \in \mathbb{N}_1^{d - 1}} \sum_{\bq \in \mathbb{N}_N^{d - 1}}
\qint_{\G_{\bss}} \ell_{\bp} ( \bet ) \ell_{\bq} (
\sphi_{\bss} ( \bet ) )  \F_{\X_{\bss}, \bq}^{\ast} =
\sum_{\bq \in \mathbb{N}_{\bp}^{d - 1}} \qint_{\G} \ell_{\bp} ( \bet
) \ell_{\bq} ( \bet )  \F^{\ast}_{\G, \bq}, \qquad \forall
\bp \in \mathbb{N}_N^{d - 1} \label{eq:projection.integral.equation}
\end{equation}
Using the identities~\eqref{eq:projection.integral.equation}, the
equations~\eqref{eq:projection.integral.equation} become
\begin{equation*}
\sum_{\bss \in \mathbb{N}_1^{d - 1}} \sum_{\bq \in \mathbb{N}_N^{d - 1}}
\left( \prod_{i = 1}^{d - 1} w_{p_i} \mathcal{P}^{s_i}_{p_i \nocomma q_i}
\right)  \F_{\X_{\bss}, \bq}^{\ast} J_{e, \bp}^S = w_{\bp}  \F^{\ast}_{\G,
\bp} J_{e, \bp}^S
\end{equation*}
where $J_{e, \bp}^S$ is the surface Jacobian, given by $||(J \ba^1)_{e,\bp}||$ in this case ((6.29) of~\cite{kopriva2009}). Then, dividing both sides by
$J_{e, \bp}^S w_{\bp}$ gives
\begin{equation}
\F^{\ast}_{\G, \bp} = \sum_{\bss \in \mathbb{N}_1^{d - 1}} \sum_{\bq \in \mathbb{N}_N^{d - 1}} \left( \prod_{i = 1}^{d - 1} \mathcal{P}^{s_i}_{p_i \nocomma q_i} \right)  \F_{\X_{\bss}, \bq}^{\ast} = \sum_{\bss \in \mathbb{N}_1^{d - 1}} \left( \prod_{i =
1}^{d - 1} \mathcal{P}^{s_i} \right)  \F_{\Xi_{\bss}}^{\ast}
\end{equation}
where the last identity is obtained by the product of operators notation~\eqref{eq:product.of.operators}. Note that the identity~\eqref{eq:proj.mortar.int.id} implies
\begin{equation*}
\sum_{\bss \in \mathbb{N}_1^{d - 1}} \qint_{\G_{\bss}} \F_{\X_{\bss}}^{\ast}
v = \qint_{\G} \F_{\G}^{\ast} v, \qquad v \in \polyP_N
\end{equation*}
Then, taking $v=1$ shows that the total fluxes over an interface $\Gamma$ are the same as over $\{\Gamma_{\bss}\}_{\bss \in \mathbb{N}_1^{d-1}}$ and thus the conservation property~\eqref{eq:conservation.lw} of LWFR is maintained by the LWFR scheme.

\begin{remark}[Freestream and admissibility preservation under AMR] \label{rmk:amr.metric.terms}
Under the adaptively refined meshes, free stream preservation and provable admissibility preservation are respectively ensured.
\begin{enumerate}
\item When refining/coarsening, there are two ways to compute the metric terms -
interpolate/project the metric terms directly or interpolate/project the
reference map $\Theta$ at solution points and use the newly obtained reference
map to recompute the metric terms. The latter, which is the approach taken in this work, can lead to violation of free
stream preservation as we can have $( \II )_{e_L} ( J \ba^i
) \neq ( \II )_{e_R} ( J \ba^i )$ where $\Omega_{e_L}$ and
$\Omega_{e_S}$ are two neighbouring large and small elements respectively. Thus, the
interface terms may not vanish in the update
equation~\eqref{eq:lwfr.update.curvilinear} with constant $\uu^n$ leading to a
violation of free stream preservation. This issue only occurs in 3-D and is
thus beyond the scope of this work, but some remedies are to
interpolate/project the metric terms when refining/coarsening or to use the
reference map $\Theta \in \polyP_{N / 2}$, as explained
in~{\cite{Kopriva2019}}. Another solution has been studied
in~{\cite{Kozdon2018}} where a common finite element space with mixed degree
$N - 1$ and $N$ is used with continuity at the non-conformal interfaces. Since this work only deals with problems in 2-D, we always have$( \II )_{e_L} ( J \ba^i
) = ( \II )_{e_R} ( J \ba^i )$ ensuring that the interface terms in~\eqref{eq:lwfr.update.curvilinear} vanish when $\uu = \bc$. Further, since the metric terms are recomputed in this work, the volume terms will vanish by the same arguments as in~Section~\ref{sec:free.stream.lwfr}. Thus, free stream preservation is maintained even with the non-conformal, adaptively refined meshes.

\item The flux limiting explained in Section~\ref{sec:flux.correction} ensures admissibility in means (Definition~\ref{defn:mean.pres}) and then uses the scaling limiter of~\cite{Zhang2010b} to enforce admissibility of solution polynomial at all solution points to obtain an admissibility preserving scheme (Definition~\ref{defn:adm.pres}). However, the procedure does not ensure that the polynomial is admissible at points which are not the solution points. Adaptive mesh refinement introduces such points into the numerical method and can thus cause a failure of admissibility preservation in the following situations: (a) mortar solution values $\{ \uu_{\X_{\bss}}^- \}$ obtained by interpolation as in~\eqref{eq:uXidefn} are not admissible, (b) mean values $\{ \overline{\uu}_{e_{\bss}} \}$ of the solution values $\{ \uu_{e_{\bss}} \}$ obtained by interpolating from the larger element as in~\eqref{eq:interpolation.ue} are not admissible. Since the scaling limiter~\cite{Zhang2010b} can be used to enforce admissibility of solution at any desired points, the remedy to both the issues is further scaling;  we simply perform scaling of solution point values $\{ \uu_{\X_{\bss}}^- \}, \{ \uu_{e_{\bss}} \}$ with the admissible mean value $\overline{\uu}_e$. This will ensure that the mortar solution point values and the mean values $\{\uu_{e_{\bss}} \}$ are admissible.
\end{enumerate}
\end{remark}

\subsection{AMR indicators}\label{sec:amr.indicator}

The process of adaptively refining and coarsening the mesh requires a solution
smoothness indicator. In this work, two smoothness indicators have been used
for adaptive mesh refinement. The first is the indicator
of~{\cite{henneman2021}}, explained in Section~\ref{sec:smooth.ind}. The
second is L{\"o}hner's smoothness indicator~{\cite{lohner1987}} which uses the central finite difference formula for second derivative, which is given by
\begin{equation*}
\begin{gathered}
\alpha_e = \max_{\bp \in \Nnd} \max_{1 \leq i \leq d} \frac{\left|
q(\uu_{\bp_{i +}}) - 2 q(\uu_{\bp}) + q(\uu_{\bp_{i -}}) \right|}{\left| q(\uu_{\bp_{i
+}}) - q(\uu_{\bp}) \right| + \left| q(\uu_{\bp}) - q(\uu_{\bp_{i -}}) \right| +
f_{\tmop{wave}}  ( \left| q(\uu_{\bp_{i +}}) \right| + 2 \left| q(\uu_{\bp})
\right| + \left| q(\uu_{\bp_{i -}}) \right| )} \label{eq:lohner.ind} \\
( \bp_{i \pm} )_m = \begin{cases}
p_m, \quad & m \neq i\\
p_{i \pm 1},  & m = i
\end{cases}
\end{gathered}
\end{equation*}
where $\left\{ \uu_{\bp} \right\}_{\bp \in \Nnd}$ are the degrees of freedom
in element $\Omega_e$ and $q$ is a derived quantity like the product of density and pressure used in Section~\ref{sec:smooth.ind}. The value $f_{\tmop{wave}} = 0.2$ has been chosen in all the
tests.

Once a smoothness indicator is chosen, the three level controller implemented
in \tmverbatim{Trixi.jl}~{\cite{Ranocha2021}} is used to determine the local
refinement level. The mesh begins with an initial refinement level and the
effective refinement level is prescribed by how much further refinement has
been done to the initial mesh, The mesh is created with two thresholds
\tmverbatim{med\_threshold} and \tmverbatim{max\_threshold} and three
refinement levels \tmverbatim{base\_level}, \tmverbatim{med\_level} and
\tmverbatim{max\_level}. Then, we have

\[
\tmverbatim{level}_e = \begin{cases}
\tmverbatim{base\_level},\qquad & \alpha_e \le \tmverbatim{med\_threshold} \\
\tmverbatim{med\_level},\qquad & \tmverbatim{med\_threshold} \le \alpha_e \le \tmverbatim{max\_threshold} \\
\tmverbatim{max\_level},\qquad & \tmverbatim{max\_threshold} \le \alpha_e
\end{cases}
\]

Beyond these refinement levels, further refinement is performed to make sure
that two neighbouring elements only differ by a refinement level of 1.

\section{Time stepping}\label{sec:time.stepping}

This section introduces an embedded error approximation method to compute the
time step size $\Delta t$ for the single stage Lax-Wendroff Flux
Reconstruction method. A standard way to compute the time step size
$\Delta t^n$ is to use~{\cite{babbar2022,babbar2023admissibility}}
\begin{equation}
\Delta t_n = C_s \min_{e,\bp} \frac{| J_{e,\bp} |}{\sigma ( \uebp^n )}
\tmop{CFL} (N) \label{eq:cfl.formula}
\end{equation}
where the minimum is taken over all elements $\{\Omega_e\}_e$, $J_e$ is the Jacobian of the change of variable map, $\sigma ( \uu^n_e )$ is the largest eigenvalue of the flux jacobian at state $\uu_e^n$, approximating the local wave speed, $\tmop{CFL} (N)$ is the optimal CFL number dependent on solution polynomial degree $N$ and $C_s \leq 1$ is a safety factor. In~{\cite{babbar2022}}, a Fourier stability analysis of the LWFR scheme was performed on Cartesian grids, and the optimal $\tmop{CFL}$ numbers were obtained for each degree $N$ which guaranteed the stability of the scheme. However, the Fourier stability analysis does not apply to curvilinear grids and formula~\eqref{eq:cfl.formula} need not guarantee $L^2$ stability which may require the CFL number to be fine-tuned for each problem. Along with the $L^2$ stability, the time step has to be chosen so that the scheme does not give inadmissible solutions. An error-based time stepping method inherently minimizes the parameter tuning process in time step computation. The parameters in an error-based time stepping scheme that a user has to specify are the absolute and relative error tolerances $\tau_a, \tau_r$, and they only affect the time step size logarithmically. In particular, because of the weak dependence, tolerances $\tau_a = \tau_r = 10^{- 6}$ worked reasonably for all tests with shocks; although, it was possible to enhance performance by choosing larger tolerances for some problems. Secondly, if inadmissibility is detected during any step in the scheme or if errors are too large, the time step is redone with a reduced time step size provided by the error estimate. The scheme also has the capability of increasing and decreasing the time step size.

We begin by reviewing the error-based time stepping scheme for the Runge-Kutta ODE solvers from~{\cite{Ranocha2021,ranocha2023}} in Section~\ref{sec:rk.error.section} and explain our extension of the same to LWFR in Section~\ref{sec:error.lw}.

\subsection{Error estimation for Runge-Kutta
schemes}\label{sec:rk.error.section}

Consider an explicit Runge-Kutta method used for solving ordinary differential equations by evolving the numerical solution from time level $n$ to $n+1$. For error-estimation, the method is constructed to have an embedded lower order update $\widehat{\uu}^{n + 1}$, as described in equation~(3) of~\cite{Ranocha2021}. The difference in the two updates, $\uu^{n+1} - \widehat{\uu}^{n + 1}$, gives an indication of the time integration error, which is used to build a  Proportional Integral Derivative (PID) controller to compute the new time step size,
\begin{equation}
\widetilde{\Delta t}_{n + 1} = \kappa (\varepsilon_{n + 1}^{\beta_1 /
k} \varepsilon_n^{\beta_2 / k} \varepsilon_{n - 1}^{\beta_3 / k})
\Delta t_n \label{eq:dtnp1.formula}
\end{equation}
where for $q$ being the order of main method, $\hat{q}$ being the order of
embedded method, we have
\[ k = \min (q, \hat{q}) + 1 \]
and $\beta_i$ are called control parameters which are optimized for the
particular Runge-Kutta scheme~{\cite{Ranocha2021}}. For $m$ being the degrees
of freedom in $\uu$, we pick absolute and relative tolerances $\tau_a, \tau_r$
and then error approximation is made as
\begin{equation}
\varepsilon_{n + 1} = \frac{1}{w_{n + 1}}, \qquad w_{n + 1} = \left(
\frac{1}{M}  \sum_{i = 1}^M \left ( \frac{\uu_i^{n + 1} - \widehat{\uu}_i^{n
+ 1}}{\tau_a + \tau_r \max \left\{ \left| \uu_i^{n + 1} \right|, \left| \nou
\right| \right\}} \right )^2 \right)^{\frac{1}{2}} \label{eq:error.estimator}
\end{equation}
where the sum is over all degrees of freedom, including solution points and
conservation variables. The tolerances are to be chosen by the user but their
influence on the scheme is logarithmic, unlike the CFL based
scheme~\eqref{eq:cfl.formula}.

The limiting function $\kappa (x) = 1 + \tan^{- 1} (x - 1)$ is used to prevent
sudden increase in time step sizes. For normalization, \tmverbatim{PETSc} uses
$\nou = \widehat{\uu}^{n + 1}$ while \tmverbatim{OrdinaryDiffEq.jl} uses $\nou
= \uu^n$. Following~{\cite{Ranocha2021}}, if the time step factor
$\widetilde{\Delta t}^{n + 1} / \Delta t^n \geq 0.9^2$, the new
time step is accepted and used in the next level as $\Delta t^{n + 1} =
\widetilde{\Delta t}^{n + 1}$. If not, or if admissibility is violated,
evolution is redone with time step size $\Delta t^n =
\widetilde{\Delta t}^{n + 1}$ computed from~\eqref{eq:dtnp1.formula}.

\subsection{Error based time stepping for Lax-Wendroff flux
reconstruction}\label{sec:error.lw}

Consider the LWFR scheme~\eqref{eq:lwfr.update.curvilinear} with polynomial degree $N$ and formal order
of accuracy $N+1$
\begin{equation*}
\uebp^{n + 1} = \uebp^n - \frac{\Delta t}{J_{e, \bp}}
\nabla_{\vxi} \cdot \tF^{\delta}_e ( \vxi_{\bp} ) - \C_{e,\bp}
\end{equation*}
where $\C_{e,\bp}$ contains contributions at element interfaces. In order to construct a lower order embedded scheme without requiring additional inter-element communication, consider an evolution where the
interface correction terms $\C_{e,\bp}$ are not used, i.e., consider the element local update
\begin{equation}
\uu^{n + 1}_{\tmop{loc}, e, \bp} = \uu^n_{e, \bp} - \frac{\Delta t}{J_{e,\bp}}
 \nabla_{\vxi} \cdot \tF^{\delta}_e ( \vxi_{\bp} )
\label{eq:uloc.high}
\end{equation}
Truncating the locally computed time averaged flux
$\tF^{\delta}_e$~\eqref{eq:time.averaged.flux} at one order lower
\begin{equation}
\widehat{\F^{\delta}_e} = \sum_{k = 0}^{N - 1} \frac{\Delta t^k}{(k +
1) !} \partial_t^k  \tf_e^{\delta} \label{eq:Fdelta.low}
\end{equation}
we can consider another update
\begin{equation}
\widehat{\uu^{n + 1}_{\tmop{loc}, e, \bp}} = \uu^n_{e, \bp} - \frac{\Delta t}{J_{e,\bp}}
 \nabla_{\vxi} \cdot \widehat{\F^{\delta}_e} ( \vxi_{\bp}
) \label{eq:unp1.low}
\end{equation}
which is also locally computed but is one order of accuracy lower. We thus use $\uu^{n + 1}_e = \uu^{n + 1}_{\tmop{loc}, e}$ and $\widehat{\uu}^{n + 1}_e = \widehat{\uu^{n + 1}_{\tmop{loc}, e}}$ in the formula~\eqref{eq:error.estimator} along with $\nou = \widehat{\uu}^{n + 1}$; then we  use the same procedure of redoing the time step sizes as in Section~\ref{sec:rk.error.section}. That is, after using the error estimate~\eqref{eq:error.estimator} to compute $\widetilde{\Delta t}_{n + 1}$~\eqref{eq:dtnp1.formula} we redo the time step if $\widetilde{\Delta t}^{n + 1} / \Delta t^n \geq 0.9^2$ or if admissibility is violated; otherwise we set $\Delta t^{n+1}$ to be used at the next time level. The complete process is also detailed in Algorithm~\ref{alg:time.stepping}. In this work, we have used the control parameters $\beta_1 = 0.6, \beta_2 = - 0.2, \beta_3 = 0.0$ for all numerical results which are the same as those used in~{\cite{Ranocha2021}} for $\text{BS}3(2)3_F$, the third-order, four-stage RK method of~{\cite{Bogacki1989}}. We tried the other control parameters from~{\cite{Ranocha2021}} but found the present choice to be either superior or only slightly different in performance, measured by the number of iterations taken to reach the final time.
\begin{algorithm}
\caption{High level overview of element residual computation of order $N + 1$ including error approximation using $\uu^{n + 1}_{\tmop{loc}}, \widehat{\uu^{n + 1}_{\tmop{loc}}}$ \label{alg:cell.residual}}
\begin{algorithmic} 
\For{ $e$ in \texttt{eachelement(mesh)} }
\State Compute $\left\{ \partial_t^k  \tf_e^{\delta} \right\}_{k = 0}^{N -1}$ using the approximate Lax-Wendroff procedure~\eqref{eq:ft.defn} to obtain $\widehat{\F^{\delta}_e}$~\eqref{eq:Fdelta.low}

\State Compute $\widehat{\uu^{n + 1}_{\tmop{loc}, e}}$ using $\widehat{\F^{\delta}_e}$~\eqref{eq:unp1.low}

\State Compute \ $\partial_t^{N + 1}  \tf_e^{\delta}$ using the approximate Lax-Wendroff procedure~\eqref{eq:ft.defn} to obtain $\tF^{\delta}_e$~\eqref{eq:time.averaged.flux}

\State Compute $\uu^{n + 1}_{\tmop{loc}, e}$ using $\F^{\delta}_e$ as in~\eqref{eq:uloc.high}

\State \texttt{temporal\_error[e]=}$\displaystyle \sum\limits_i \left( \frac{\uu^{n + 1}_{\tmop{loc}, e, i} - \widehat{\uu^{n + 1}_{\tmop{loc}, e, i}}}{\tau_a + \tau_r \max \left\{ \left| \uu^{n + 1}_{\tmop{loc}, e, i} \right|, \left| \widehat{\uu^{n + 1}_{\tmop{loc}, e, i}} \right| \right\}} \right)^2$ where the sum is over dofs in $e$

\State Compute and add local contribution of $\F^{\delta}_e$ to
the residual~\eqref{eq:lwfr.update.curvilinear}
\EndFor
\end{algorithmic}
\end{algorithm}

\begin{algorithm}
\caption{High level overview of LWFR residual (Within time integration) \label{alg:lw.residual}}
\begin{algorithmic} 
\State Compute $\{ \alpha_e \}$ (Section~\ref{sec:smooth.ind})
\State Assemble cell residual (Algorithm~\ref{alg:cell.residual})
\For{ $\G$ in \tmverbatim{eachinterface(mesh)}}
\State Compute $\F_{\G}^{\text{LW}}, \pf_{\G}$ and blend them into
$\F_{\G}$ (Algorithm~\ref{alg:blended.flux})
\EndFor

\For{$e$ in \tmverbatim{eachelement(mesh)}}
\State Add contribution of numerical fluxes to residual of element $e$ (common to high, low residual, see Remark~\ref{rmk:common.contri})
\EndFor

\State Update solution
\State Apply positivity limiter
\end{algorithmic}
\end{algorithm}

\begin{algorithm}
\caption{\label{alg:time.stepping}Lax-Wendroff Flux Reconstruction at a high level to explain error based time stepping}
\begin{algorithmic} 
	\State Initialize $t \leftarrow 0$, time step number $n \leftarrow 0$, and
	initial state $\uu^0$
	\State Initialize PID controller with $\varepsilon_0 \leftarrow 1,
	\varepsilon_{- 1} \leftarrow 1$
	\State Initialize $\Delta t_0 = \widetilde{\Delta t}$ with a user
	supplied value
	\State Initialize \tmverbatim{accept\_step} $\leftarrow$ \tmverbatim{false}
	\While{$t < T$}
	\If{\tmverbatim{accept\_step}}
	\State \tmverbatim{accept\_step} $\leftarrow$ \tmverbatim{false}
	\State $t \leftarrow \tilde{t}$

	\State $\Delta t_{n + 1} \leftarrow \widetilde{\Delta t}$

	\State $n \leftarrow n + 1$
	\Else
	\State $\Delta t_n \leftarrow \widetilde{\Delta t}$
	\EndIf
	\If{$t + \Delta t_n >$\tmverbatim{final\_time}}
	\State $\Delta t_n$ $\leftarrow$ \tmverbatim{final\_time}$- t$
	\EndIf

\State $\uu^n \rightarrow \uu^{n + 1}$ with $\Delta t_n$
(Algorithm~\ref{alg:lw.residual}, \ref{alg:cell.residual}) computing \tmverbatim{temporal\_error} and checking
admissibility

\State $w_{n + 1} \gets \left( \frac{1}{M}  \sum\limits_e \text{\tmverbatim{temporal\_error[e]}} \right)^{\frac{1}{2}}$
\Comment{$M$ is the total dofs}

\State $w_{n + 1} \gets \max \{ w_{n + 1}, 10^{- 10} \}$
\Comment{To avoid division by zero}

\State $\varepsilon_{n + 1} \leftarrow \frac{1}{w_{n + 1}}$

\State \tmverbatim{dt\_factor} $\leftarrow$ $\kappa (\varepsilon_{n + 1}^{\beta_1 / k} \varepsilon_n^{\beta_2 / k} \varepsilon_{n - 1}^{\beta_3 / k})$
\Comment{$\kappa (x) = 1 + \tan^{- 1} (x - 1)$}

\State $\widetilde{\Delta t} \leftarrow$\tmverbatim{dt\_factor}$\cdot
\Delta t_n$

\If{\tmverbatim{dt\_factor} $\geq$ \tmverbatim{accept\_safety} \tmverbatim{\&\&} \tmverbatim{no inadmissibility}}
\State \tmverbatim{accept\_step} $\leftarrow$ \tmverbatim{true}
\Else
\State \tmverbatim{accept\_step} $\leftarrow$ \tmverbatim{false}
\EndIf

\If{\tmverbatim{accept\_step}}
\State $\tilde{t} \leftarrow t + \Delta t_n$

\If{$\tilde{t} \approx$ \tmverbatim{final\_time}}
\State $\tilde{t} \leftarrow$\tmverbatim{final\_time}
\EndIf
\State Apply callbacks
\Comment{Analyze and postprocess solution, AMR}

\State Positivity correction for AMR (Remark~\ref{rmk:amr.metric.terms})
\EndIf
\EndWhile
\end{algorithmic}
\end{algorithm}

\section{Numerical results}\label{sec:numerical.results}

The numerical experiments are performed on 2-D Euler's equations~\eqref{eq:2deuler}. Unless specified otherwise, the adiabatic constant $\gamma$ will be taken as $1.4$ in the numerical tests, which is the typical value for air. The CFL based time stepping schemes use the following formula for the time step (see 2.5 of~\cite{ranocha2023}, but also~\cite{Rasha2021, Ranocha2021})
\begin{equation}
\Delta t_n = \frac{2}{N + 1} \Cs \min_{e, \bp}  \left( \frac{1}{\left| J_{e, \bp} \right|}  \sum_{i = 1}^d \tilde{\lambda}_{e, \bp}^i \right), \qquad \Cs \leq 1 \label{eq:cfl.time.step}
\end{equation}
where $\{ \tilde{\lambda}_{e, \bp}^i \}_{i \subindex 1}^d$ are wave
speed estimates computed by the transformation
\[
\tilde{\lambda}_{e, \bp}^i = \sum_{n = 1}^d (J a^i_n)_{e,\bp} \lambda_{e, \bp}^i
\]
where $\{ J \ba^i \}_{i \subindex 1}^d$ are the contravariant
vectors~\eqref{defn:contravariant.basis} and $\lambda_{e,\bp}^i$ is the absolute maximum eigenvalue of $\pf'_i ( \uu_{e, \bp} )$. For Euler's
equations with velocity vector $\bv = \{ v_i \}$ and sound speed $c$,
$\lambda^i = | v_i | + c$. The $\Cs$ in~\eqref{eq:cfl.time.step} may need to be
fine-tuned depending on the problem. Other than the convergence test~(Section~\ref{sec:isentropic}), the results shown below have been generated with error-based time stepping~(Section~\ref{sec:error.lw}). The scheme is implemented in a Julia package {\tt TrixiLW.jl}\correction{~\cite{babbar2024trixilw}} written using {\tt Trixi.jl}~{\cite{Ranocha2022,schlottkelakemper2021purely,schlottkelakemper2020trixi}}
as a library. {\tt Trixi.jl} is a high order PDE solver package in {\tt Julia}~\cite{Bezanson2017} and uses the Runge-Kutta Discontinuous Galerkin method; {\tt TrixiLW.jl} uses {\tt Julia}'s multiple dispatch to borrow features like curved meshes support and postprocessing from {\tt Trixi.jl}. {\tt TrixiLW.jl} is not a fork of {\tt Trixi.jl} but only uses it through {\tt Julia}'s package manager without modifying its internal code. \correction{Most of the curvilinear meshes are generated using {\tt HOHQMesh.jl}~\cite{hohqmesh}, whose construction process we briefly describe. The mesh generation in {\tt HOHQMesh.jl} begins with a Cartesian mesh on which each curved boundary is supplied (as analytic expressions, splines or even as sets of points) within the Cartesian domain (e.g., the top/bottom of the airfoil in Figure~\ref{fig:naca}a) followed by application of a smoother on elements nearing the boundary. Smoothing is a process where elements are relocated to improve mesh quality (e.g., minimum/maximum angle, aspect ratio). Currently, {\tt HOHQMesh.jl} always uses a linear spring smoothing algorithm from~\cite{Minoli2011,Mavriplis2006} that produces straight-sided quadrilaterals. Thus, for the meshes generated using {\tt HOHQMesh.jl}, only boundary elements have curved faces. However, the solver can handle general meshes where the inner elements can also have curved boundaries. Some numerical experiments like the isentropic vortex (Figure~\ref{fig:isentropic}) are performed on meshes generated using a transformation from a square and have such curved interfaces in the interior also.} The setup files for the numerical experiments in this work are available at~\cite{curvedrepo}.
\subsection{Results on Cartesian grids}

\subsubsection{Mach 2000 astrophysical jet}

In this test, a hypersonic jet is injected into a gas at rest with a Mach number of 2000 relative to the sound speed in the gas at rest. Following~{\cite{ha2005,zhang2010c}}, the domain is taken to be $[0, 1] \times [- 0.5, 0.5]$, the ambient gas in the interior has state $\tmmathbf{u}_a$ defined in primitive variables as
\[
(\rho, u, v, p)_a = (0.5, 0, 0, 0.4127)
\]
and inflow state $\tmmathbf{u}_j$ is defined in primitive variables as
\[
(\rho, u, v, p)_j = (5, 800, 0, 0.4127)
\]
On the left boundary, we impose the boundary conditions
\[
\uu_b = \begin{cases}
\uu_a, & \quad \text{if} \quad y \in [- 0.05, 0.05]\\
\uu_j, & \quad \text{otherwise}
\end{cases}
\]
and outflow conditions on the right, top and bottom of the computational domain.

The simulation is performed on a uniform $512^2$ element mesh. This test requires admissibility preservation to be enforced to avoid solutions with negative pressure. This is a cold-start problem as the solution is constant with zero velocity in the domain at time $t=0$. However, there is a high speed inflow at the boundary, which the standard wave speed estimate for time step approximation~\eqref{eq:cfl.effective} does not account for. Thus, in order to use the CFL based time stepping, lower values of $\Cs$~\eqref{eq:cfl.time.step} have to be used in the first few iterations of the simulations. Once the high speed flow has entered the domain, the this value needs to be raised since otherwise, the simulation will use much smaller time steps than the linear stability limit permits. Error based time stepping schemes automate this process by their adaptivity and ability to redo the time steps. The simulation is run till $t = 10^{- 2}$ and the log scaled density plot for degree $N = 4$ solution obtained on the uniform mesh is shown in Figure~\ref{fig:m2000}a.  For an error-based time stepping scheme, we define the effective $\Cs$ as
\begin{equation}
\Cs := \Delta t_n  \left[ \frac{2}{N + 1} \min_{e, \bp}  \left(\frac{1}{\left| J_{e, \bp} \right|}  \sum_{i = 1}^d \tilde{\lambda}_{e, \bp}^i \right) \right]^{- 1} \label{eq:cfl.effective}
\end{equation}
which is a reverse computation so that its usage in~\eqref{eq:cfl.formula} will get $\Delta t_n$
chosen in the error-based time stepping scheme~(Algorithm~\ref{alg:time.stepping}). In Figure~\ref{fig:m2000}b, time $t$ versus effective $\Cs$~\eqref{eq:cfl.effective} is plotted
upto $t = 10^{- 5}$ to demonstrate that the scheme automatically uses a smaller $\Cs$
of ${\sim} 10^{- 3}$ at the beginning which later increases and stabilizes at ${\sim} 10^{- 1}$. Thus, the error based time stepping is automatically doing what would have to be manually implemented for a CFL based time stepping scheme which would be problem-dependent and requiring user intervention.
\begin{figure}
\begin{tabular}{cc}
{\includegraphics[width=0.44\textwidth]{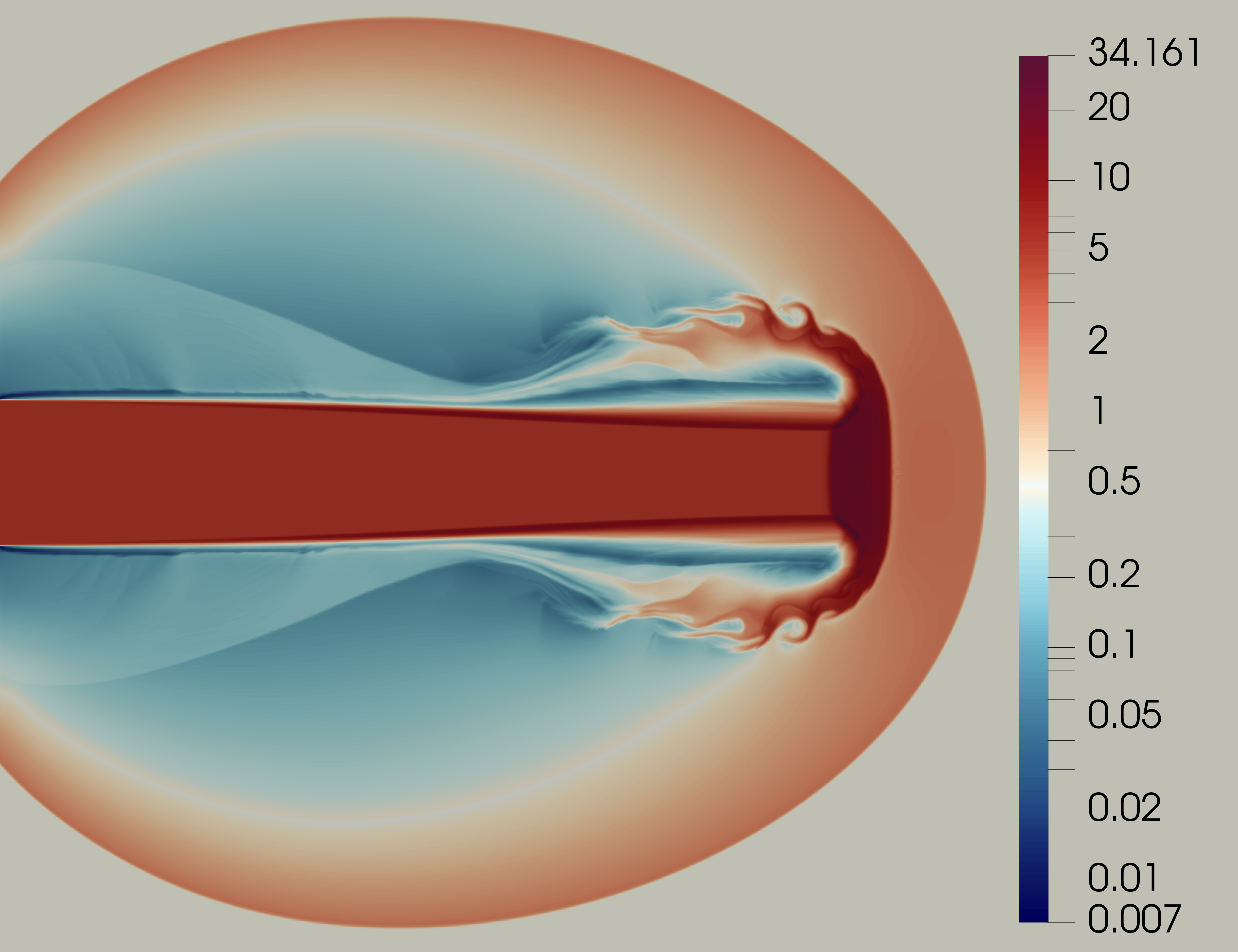}} & {\includegraphics[width=0.44\textwidth]{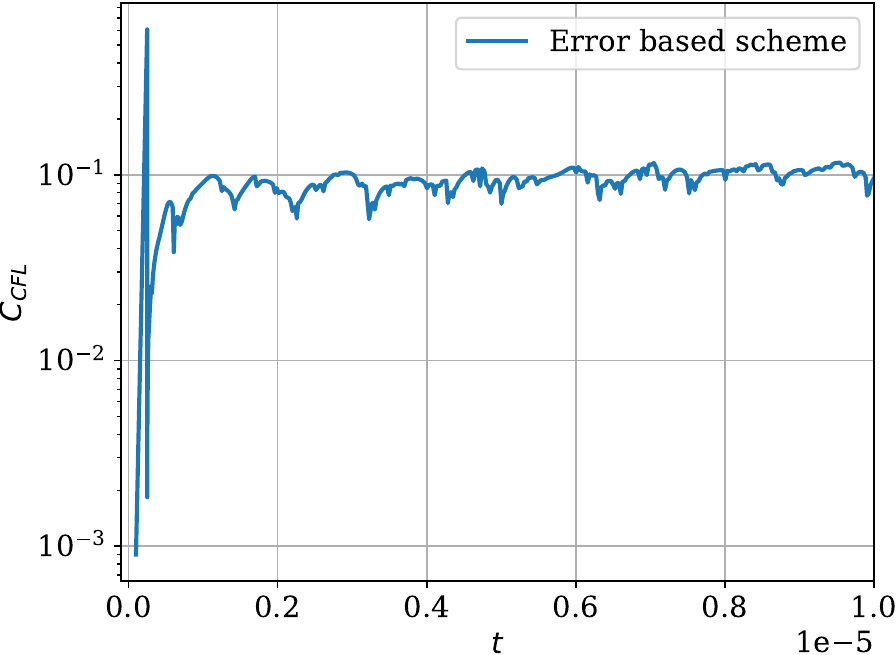}}\\
(a) & (b)
\end{tabular}
\caption{Mach 2000 astrophysical jet (a) Density plot (b) Effective $\Cs$\label{fig:m2000}}
\end{figure}

\subsubsection{Kelvin-Helmholtz instability}

The Kelvin-Helmholtz
instability is a common fluid instability that occurs across density and
tangential velocity gradients leading to a tangential shear flow. This
instability leads to the formation of vortices that grow in amplitude and can
eventually lead to the onset of turbulence. The initial condition is given
by~{\cite{ramirez2021}}
\[ (\rho, u, v, p) = \left( \frac{1}{2} + \frac{3}{4} B, \frac{1}{2}  (B - 1),
\frac{1}{10} \sin (2 \pi x), 1 \right) \]
with $B = \tanh (15 y + 7.5) - \tanh (15 y - 7.5)$ in domain
$\Omega = [- 1, 1]^2$ with periodic boundary conditions. The initial condition has a Mach number $M \leq 0.6$
which makes compressibility effects relevant but does not cause shocks to
develop. Thus, a very mild shock capturing scheme is used by setting $\alpha_e
= \min \{ \alpha_e, \alpha_{\max} \}$~(Section~\ref{sec:smooth.ind}) where
$\alpha_{\max} = 0.002$. The same smoothness indicator of
Section~\ref{sec:smooth.ind} is used for AMR indicator with parameters from
Section~\ref{sec:amr.indicator} chosen to be
\[
(\tmverbatim{base{\_}level}, \tmverbatim{med{\_}level}, \tmverbatim{max{\_}level}) = (4, 0, 8), \qquad
(\tmverbatim{{med}{\_}{threshold}}, \tmverbatim{max{\_}threshold}) = (0.0003, 0.003)
\]
where $\tmverbatim{base\_level} = 0$ refers to a $2 \times 2$ mesh. This test case, along with indicators' configuration was taken from the examples of \tmverbatim{Trixi.jl}~{\cite{Ranocha2021}}. The simulation is run till $t = 3$ using polynomial degree $N = 4$. There is a shear layer at $y=\pm 0.5$ which rolls up and develops smaller scale structures as time progresses. The results are shown in Figure~\ref{fig:kh} and it can be seen that the AMR indicator is able to track the small scale structures. The simulation starts with a mesh of $1024$ elements which steadily increases to 13957 at the final time; the mesh is adaptively refined or coarsened at every time step. The solution has non-trivial variations in small regions around the rolling structures which an adaptive mesh algorithm can capture efficiently, while a uniform mesh with similar resolution would require 262144 elements.
\begin{figure}
\centering
\begin{tabular}{cc}
{\includegraphics[width=0.46\textwidth]{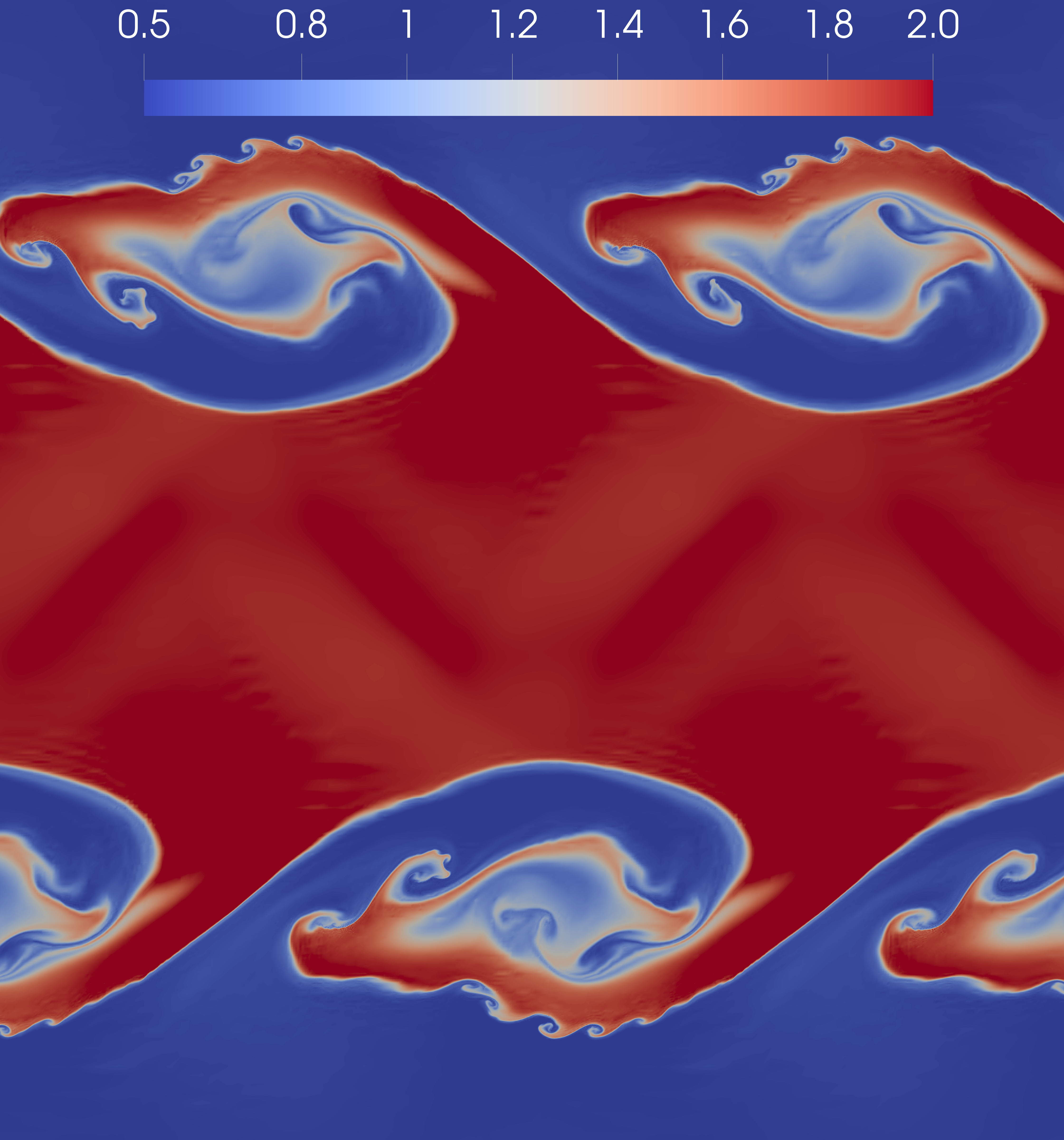}} & {\includegraphics[width=0.46\textwidth]{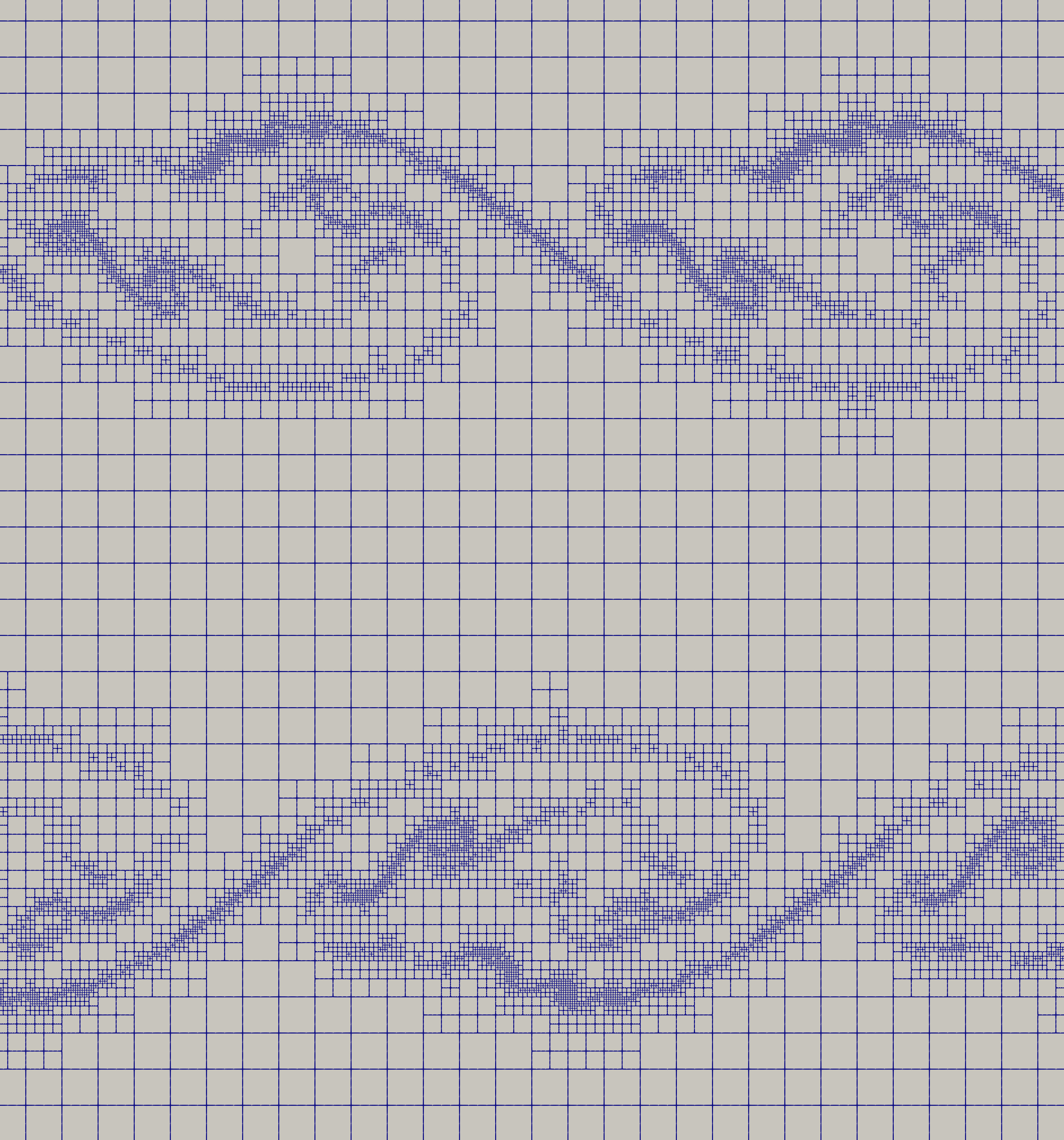}}\\
(a) & (b)
\end{tabular}
\caption{\label{fig:kh}Kelvin-Helmholtz instability at $t = 3$ using
polynomial degree $N = 4$ (a) density plots, (b) adaptively refined mesh}
\end{figure}

\subsubsection{Double mach reflection}\label{sec:dmr}

This test case was originally proposed by Woodward and Colella~{\cite{Woodward1984}} and consists of a shock impinging on a wedge/ramp which is inclined by 30 degrees. An equivalent problem is obtained on the rectangular domain $\Omega = [0,4] \times [0,1]$ obtained by rotating the wedge so that the initial condition now consists of a shock angled at 60 degrees. The solution consists of a self similar shock structure with two triple points. Define $\uu_b = \uu_b(x, y, t)$ in primitive variables as
\[
(\rho, u, v, p) = \left\{\begin{array}{ll}
(8, 8.25 \cos ( \frac{\pi}{6} ), - 8.25 \sin (
\frac{\pi}{6} ), 116.5), & \text{if } x < \frac{1}{6} + \frac{y +
20 t}{\sqrt{3}}\\
(1.4, 0, 0, 1), & \text{if } x > \frac{1}{6} + \frac{y + 20 t}{\sqrt{3}}
\end{array}\right.
\]
and take the initial condition to be $\uu_0 (x, y) = \uu_b (x, y, 0)$. With
$\uu_b$, we impose inflow boundary conditions at the left side $\{0\} \times
[0, 1]$, outflow boundary conditions both on $[0, 1 / 6] \times \{0\}$ and
$\{4\} \times [0, 1]$, reflecting boundary conditions on $[1 / 6, 4] \times
\{0\}$ and inflow boundary conditions on the upper side $[0, 4] \times \{1\}$.

The setup of L{\"o}hner's smoothness indicator~\eqref{eq:lohner.ind} is taken from an example of \tmverbatim{Trixi.jl}~{\cite{Ranocha2021}}
\[
(\tmverbatim{base{\_}level}, \tmverbatim{med{\_}level}, \tmverbatim{max{\_}level}) = (0, 3, 6), \qquad
(\tmverbatim{{med}{\_}{threshold}}, \tmverbatim{max{\_}threshold}) = (0.05, 0.1)
\]
where $\tmverbatim{base{\_}level} = 0$ corresponds to a $16 \times 5$ mesh. The density solution obtained using polynomial degree $N = 4$ is shown in Figure~\ref{fig:dmr}
where it is seen that AMR is tracing the shocks and small scale shearing
well. The initial mesh consists of 80 elements and is refined in first iteration in the vicinity of the shock to get 2411 elements. In later iterations, the mesh is refined and coarsened in each iteration, and the number of elements keeps increasing up to 7793 elements at the final time $t=0.2$. In order to capture the same effective refinement, a uniform mesh will require 327680 elements.

\begin{figure}
\centering
\begin{tabular}{l}
{\includegraphics[width=0.9\textwidth]{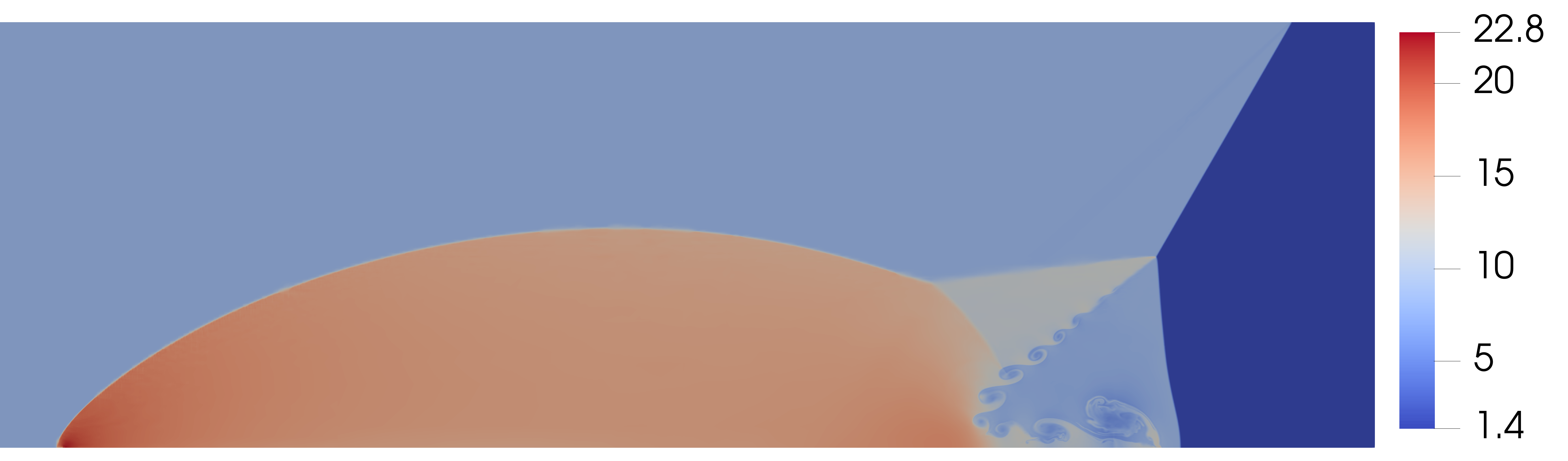}} \\
\hspace{0.4\textwidth} (a) \\
\includegraphics[width=0.8\textwidth]{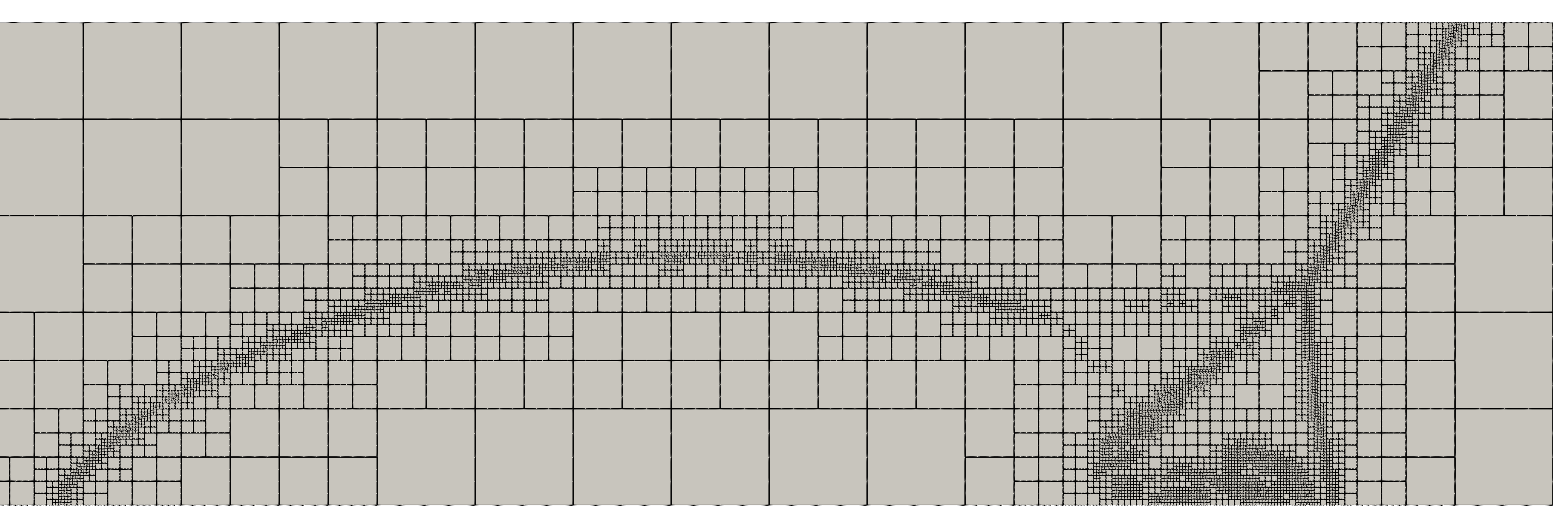} \\
\hspace{0.4\textwidth} (b)
\end{tabular}
\caption{\label{fig:dmr}Double Mach reflection with solution polynomial degree $N = 4$ at $t = 0.2$ (a) Density plot, (b) Adaptively refined mesh at
final time}
\end{figure}

\subsubsection{Forward facing step}\label{sec:forward.step}

Forward facing step is a classical test case
from~{\cite{emery1968,Woodward1984}} where a uniform supersonic flow passes
through a channel with a forward facing step generating several phenomena like
a strong bow shock, shock reflections and a Kelvin-Helmholtz instability. It
is a good test for demonstrating a shock capturing scheme's capability of
capturing small scale vortex structures while suppressing spurious
oscillations arising from shocks. The step is simulated in the domain $\Omega = ([0, 3] \times [0, 1]) \setminus ([0.6, 3] \times [0, 0.2])$
and the initial conditions are taken to be
\[
(\rho, u, v, p) = (1.4, 3, 0, 1) \qquad \text{in} \quad \Omega
\]
The left boundary condition is taken as an inflow and the right one
is an outflow, while the rest are solid walls. The corner $(0.6, 0.2)$ of the step
is the center of a rarefaction fan and can lead to large errors and the formation of a spurious boundary layer, as shown in Figure 7a-7d of~\cite{Woodward1984}. These errors can be reduced by refining the mesh near the corner, which is automated here with the AMR algorithm.

The setup of L{\"o}hner's smoothness indicator~\eqref{eq:lohner.ind} is taken from an example of \tmverbatim{Trixi.jl}~{\cite{Ranocha2021}}
\[
(\tmverbatim{base{\_}level}, \tmverbatim{med{\_}level}, \tmverbatim{max{\_}level}) = (0, 2, 5), \qquad
(\tmverbatim{{med}{\_}{threshold}}, \tmverbatim{max{\_}threshold}) = (0.05, 0.1)
\]
The density at $t = 3$ obtained using polynomial
degree $N = 4$ and L{\"o}hner's smoothness indicator~\eqref{eq:lohner.ind}
is plotted in Figure~\ref{fig:forward.step}. The shocks have been well-traced
and resolved by AMR and the spurious boundary layer and Mach stem do not
appear. The simulations starts with a mesh of 198 elements and the number peaks at 6700 elements during the simulation then and decreases to 6099 at the final time $t=3$. The mesh is adaptively refined or coarsened once every 100 time steps. In order to capture the same effective refinement, a uniform mesh will require 202752 elements.

\begin{figure}
\centering
\begin{tabular}{c}
{\includegraphics[width=0.8\textwidth]{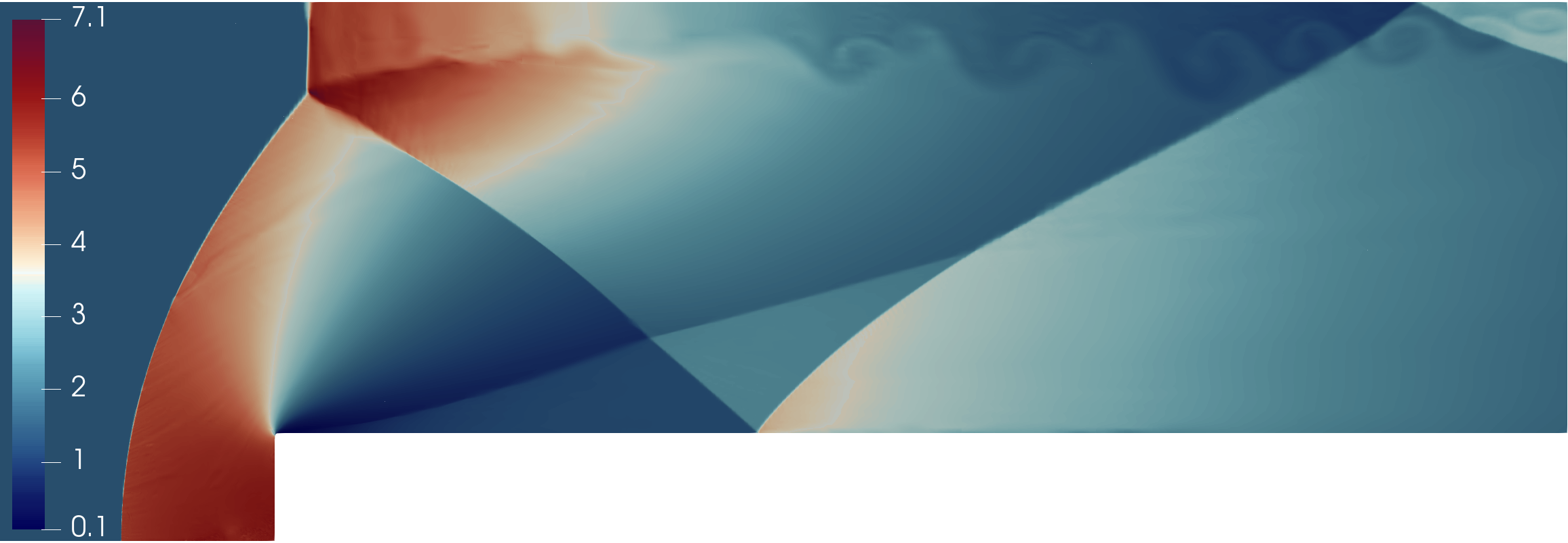}} \\
(a) \\
\includegraphics[width=0.8\textwidth]{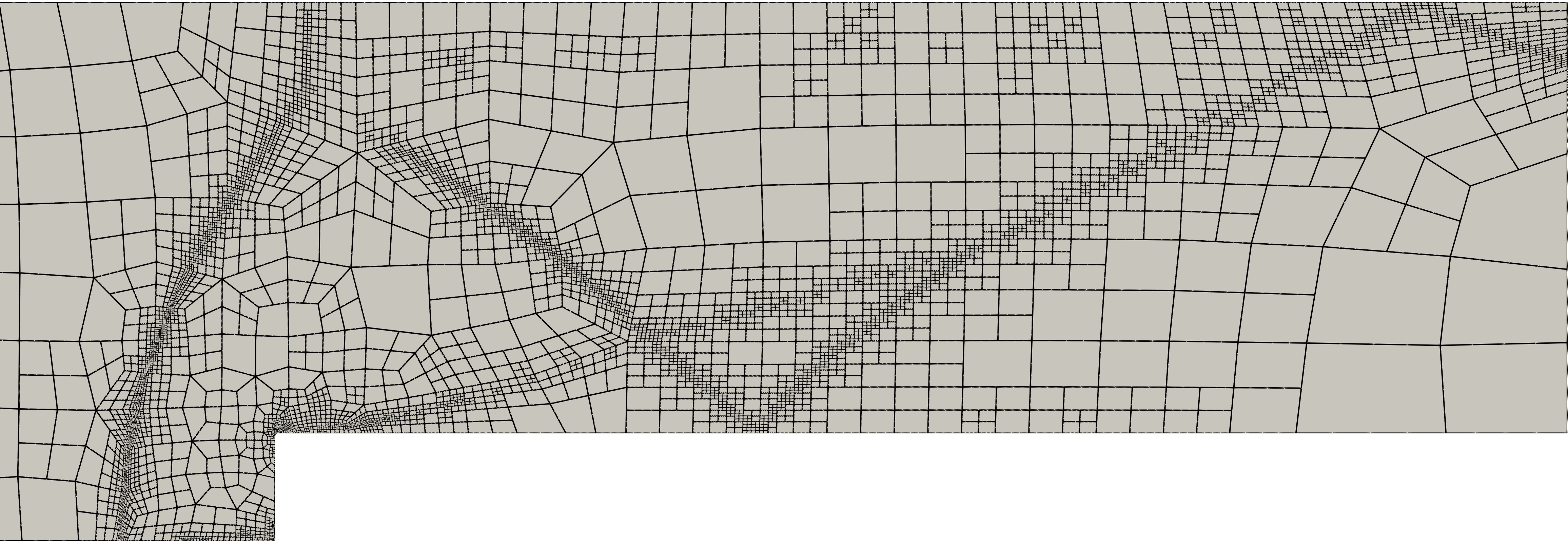} \\
(b)
\end{tabular}
\caption{\label{fig:forward.step}Mach 3 flow over forward facing step at
time $t = 3$ using solution polynomial degree $N = 4$ with L{\"o}hner's
indicator for mesh refinement. (a) Density plot (b) Adaptively refined mesh}
\end{figure}

\subsection{Results on curved grids}


\subsubsection{Free stream preservation}

In this section, free stream preservatio\textbf{}n is tested for meshes with curved elements. Since we use a reference map of degree $N$ in~\eqref{eq:reference.map}, free stream will be preserved following the discussion in Section~\ref{sec:free.stream.lwfr}. We numerically verify the same for the meshes taken from \tmverbatim{Trixi.jl} which are shown in Figure~\ref{fig:free.stream}. The mesh in Figure~\ref{fig:free.stream}a consists of curved boundaries and only the elements adjacent to the boundary are curved, while the one in Figure~\ref{fig:free.stream}b is a non-conforming mesh with curved elements everywhere, and is used to verify that free stream preservation holds with adaptively refined meshes. The mesh in Figure~\ref{fig:free.stream}b is a 2-D reduction of the one used in Figure 3 of~{\cite{Rueda2021}} and is defined by the global map $ (\xi, \eta) \mapsto (x, y)$ from $[0, 3]^2 \to \Omega$ described as
\begin{equation*}
x  = \xi + \frac{3}{8}
\cos \left( \frac{\pi}{2} \frac{2 \xi - 3}{3} \right)
\cos \left( 2 \pi \frac{2 y - 3}{3} \right), \qquad
y  = \eta + \frac{3}{8}
\cos \left( \frac{3\pi}{2} \frac{2 \xi - 3}{3} \right)
\cos \left( \frac{\pi}{2} \frac{2 \eta - 3}{3}\right)
\end{equation*}
The free stream preservation is verified on these meshes by solving the Euler's equation with constant initial data
\[
(\rho, u, v, p) = (1, 0.1, -0.2, 10)
\]
and Dirichlet boundary conditions. Figure~\ref{fig:free.stream} shows the density at time $t=10$ which is constant throughout the domain.
\begin{figure}
\centering
\includegraphics[width=0.8\textwidth]{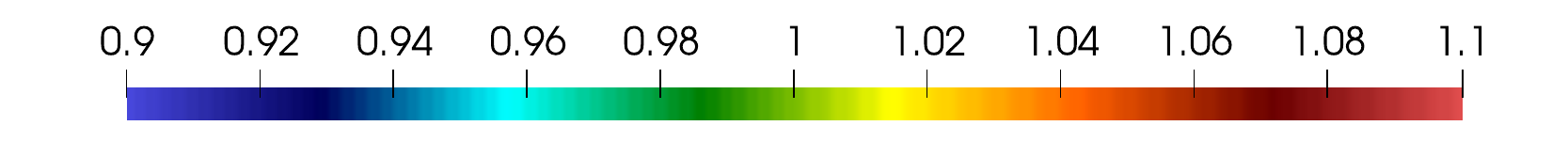}
\begin{tabular}{cc}
{\includegraphics[width=0.34\textwidth]{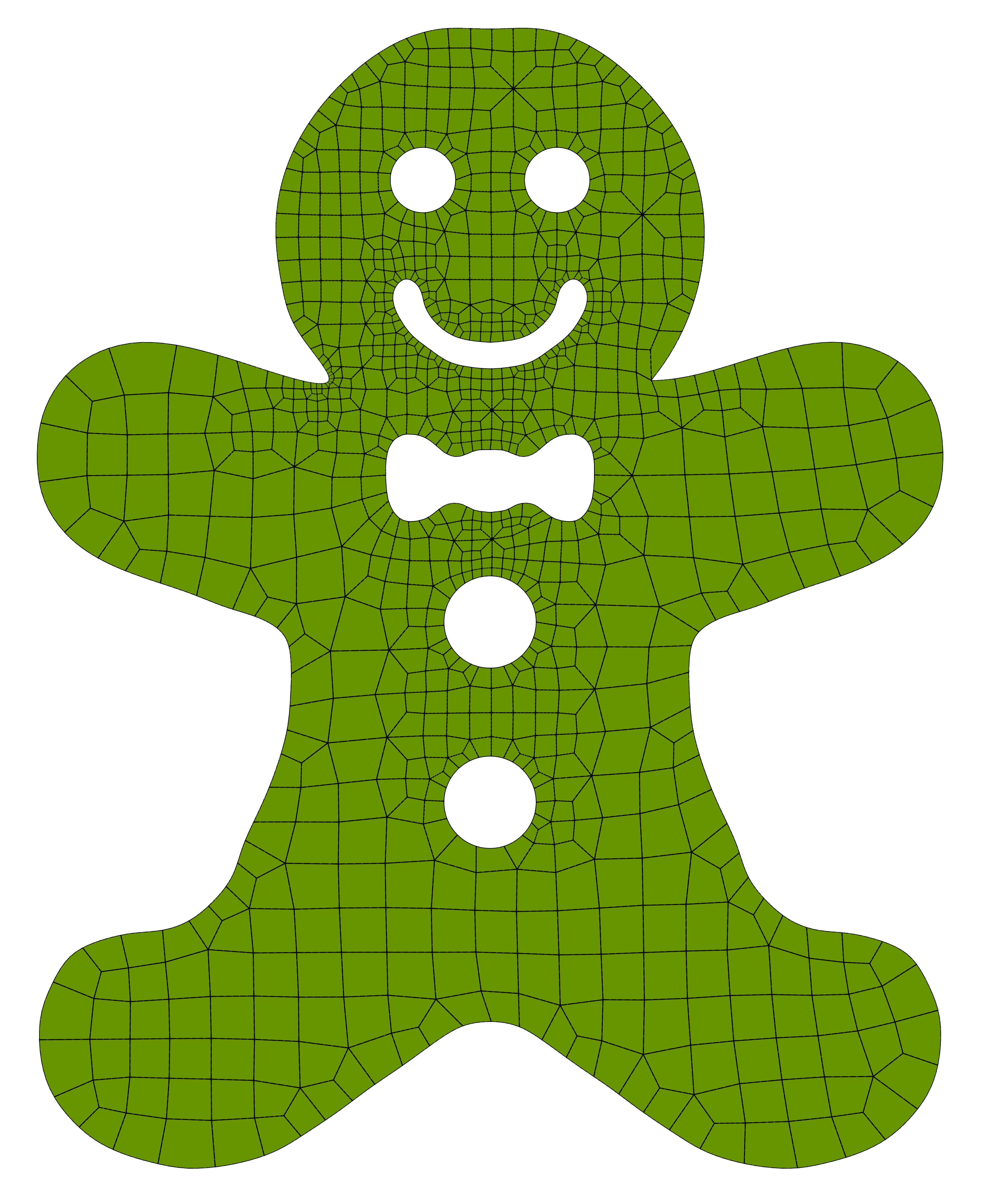}} & {\includegraphics[width=0.4\textwidth]{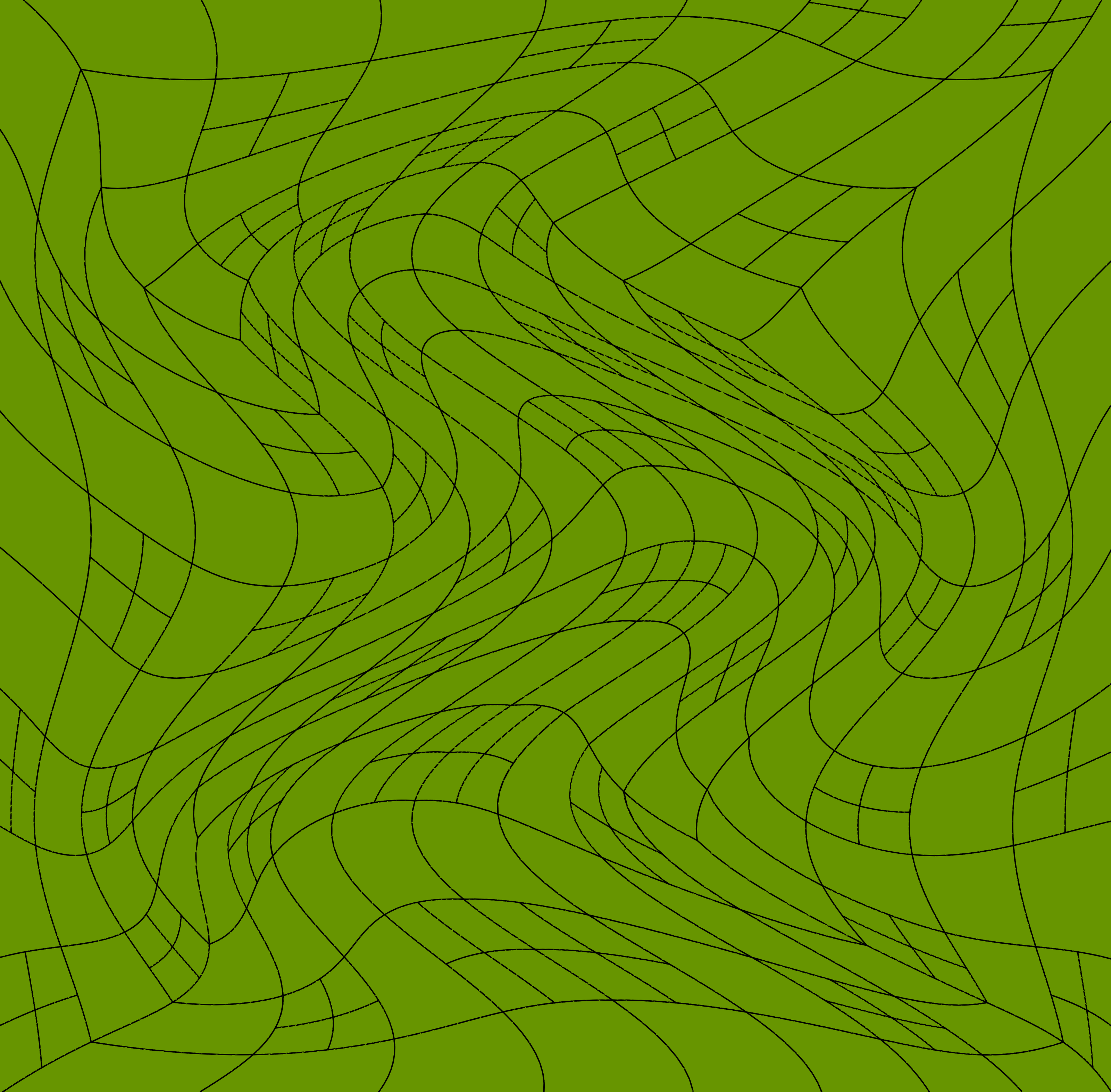}}\\
(a) & (b)
\end{tabular}
\caption{\label{fig:free.stream}Density plots of free stream tests with mesh and solution polynomial degree $N = 6$ at $t = 10$ on (a) mesh with curved boundaries, (b) mesh with refined curved elements}
\end{figure}

\subsubsection{Rotating Couette flow}\label{sec:couette}
\correction{This test from~\cite{Hesthaven2008} consists of a smooth steady state solution with a rotational velocity between two concentric circles. This is actually a steady solution of both the Navier-Stokes and Euler equations as the viscous forces vanish in the steady state. The simulation is run for a long time to reach steady solution and then measure order of accuracy with non-periodic (Dirichlet) boundary conditions on the curved boundaries prescribed by the steady state solution. The steady state, which is also used to specify the boundary values, is given by
\[
(\rho, u, v, p) = \left(1, -\sin \theta, \cos \theta, 1+\frac{1}{75^2}\left(\frac{r^2}{2} - 32 \ln r - \frac{128}{r^2}\right)\right),\qquad \theta = \tan^{-1}\left(\frac y x\right), r = \sqrt{x^2+y^2}
\]
The mesh is generated using a reference map that is defined as $(\xi, \eta)$ from $(0,1) \mapsto \Omega$ given by
\[
(x,y) = (r \cos \theta, r \sin \theta), \qquad r = R_1 e^{\xi \log(R_2/R_1)}, \theta = 2 \pi \eta, R_1 = 1, R_2 = 4
\]
where the values $R_1, R_2$ are chosen to ensure that the mesh is finer near the inner circle.
The mesh and the grid resolution versus $L^2$ error analysis are shown in Figure~\ref{fig:couette} where the optimal order of accuracy is seen.
\begin{figure}
\centering
\begin{tabular}{cc}
{\includegraphics[width=0.45\textwidth]{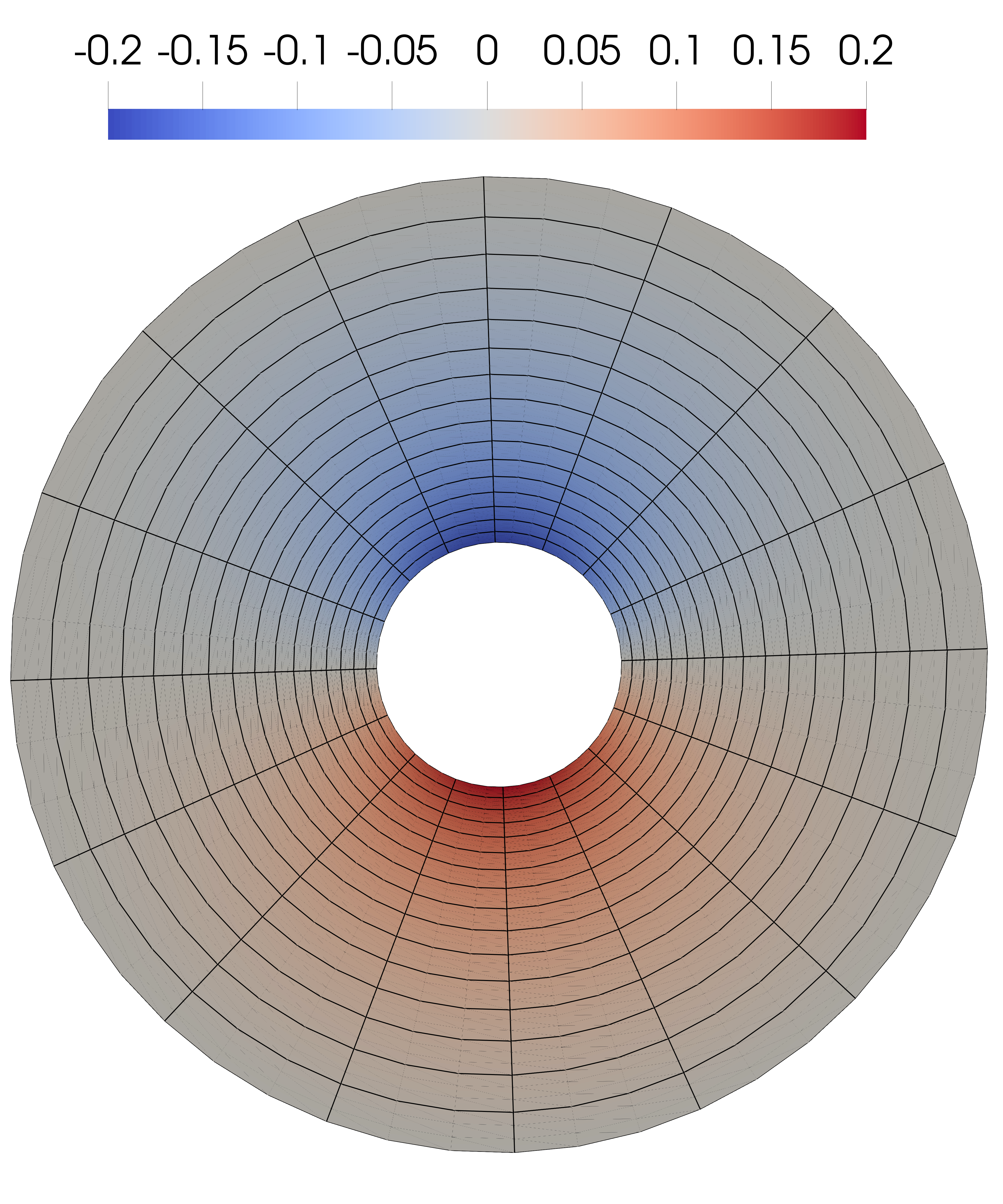}} & {\includegraphics[width=0.48\textwidth]{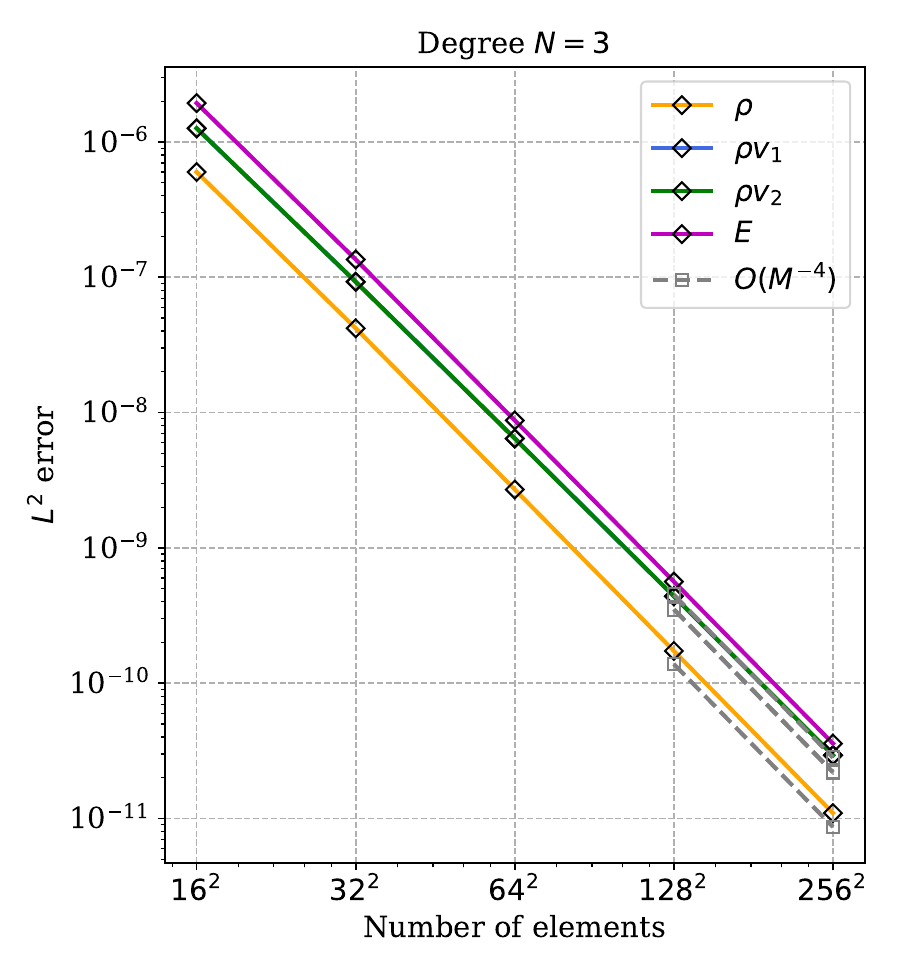}}\\
(a) & (b)
\end{tabular}
\caption{Numerical solution of the Couette flow at $t=1$. (a) The $x$ component of velocity profile, (b) Grid resolution versus $L^2$ error for the conservative variables. \label{fig:couette}}
\end{figure}
}

\subsubsection{Isentropic vortex}\label{sec:isentropic}

This is a test with exact solution taken from~{\cite{henneman2021}} where the domain
is specified by the following transformation from $[0, 1]^2 \to \Omega$
\[ \bx (\xi, \eta) = \left(\begin{array}{c}
\xi L_x - A_x L_y \sin (2 \pi \eta)\\
\eta L_y + A_y L_x \sin (2 \pi \xi)
\end{array}\right) \]
which is a distortion of the square $[0, L_x] \times [0, L_y]$ with sine waves
of amplitudes $A_x, A_y$. Following~{\cite{henneman2021}}, we choose length
$L_x = L_y = 0.1$ and amplitudes $A_x = A_y = 0.1$. The boundaries are set to
be periodic. A vortex with radius $R_v = 0.005$ is initialized in the curved
domain with center $(x_v, y_v) = (L_x / 2, L_y / 2)$. The gas constant is
taken to be $R_{\tmop{gas}} = 287.15$ and specific heat ratio $\gamma = 1.4$
as before. The free stream state is defined by the Mach number $M_0 = 0.5$,
temperature $T_0 = 300$, pressure $p_0 = 10^5$, velocity $u_0 = M_0
\sqrt{\gamma R_{\tmop{gas}} T_0}$ and density $\rho_0 =
\frac{p_0}{R_{\tmop{gas}} T_0}$. The initial condition $\uu_0$ is given by
\begin{equation*}
\begin{gathered}
(\rho, u, v, p) =  \left(
 \rho_0  \left( \frac{T}{T_0} \right)^{\frac{1}{\gamma - 1}},
 u_0  \left( 1 - \beta \frac{y - y_v}{R_v} e^{\frac{- r^2}{2}} \right),
u_0 \beta \frac{x - x_v}{R_v} e^{\frac{-r^2}{2}}, \rho (x, y) R_{\tmop{gas}} T
\right) \\
T(x, y) = T_0 - \frac{(u_0 \beta)^2}{2 C_p} e^{- r^2},\qquad r = \sqrt{(x - x_v)^2 + (y - y_v)^2} / R_v
\end{gathered}
\end{equation*}
where $C_p = R_{\tmop{gas}} \gamma / (\gamma - 1)$ is the heat capacity at constant pressure and $\beta = 0.2$ is the vortex strength. The vortex moves in the positive $x$ direction with speed $u_0$ so that the exact solution at time $t$ is $\uu(x,y,t) = \uu_0(x-u_0 t, y)$ where $\uu_0$ is extended outside $\Omega$ by periodicity. We simulate the propagation of the vortex for one time period $t_p = L_x / u_0$ and perform numerical convergence analysis for degree $N=3$ in Figure~\ref{fig:isentropic}b, showing optimal rates in grid versus $L^2$ error norm for all the conserved variables.

\begin{figure}
\centering
\begin{tabular}{cc}
{\includegraphics[width=0.51\textwidth]{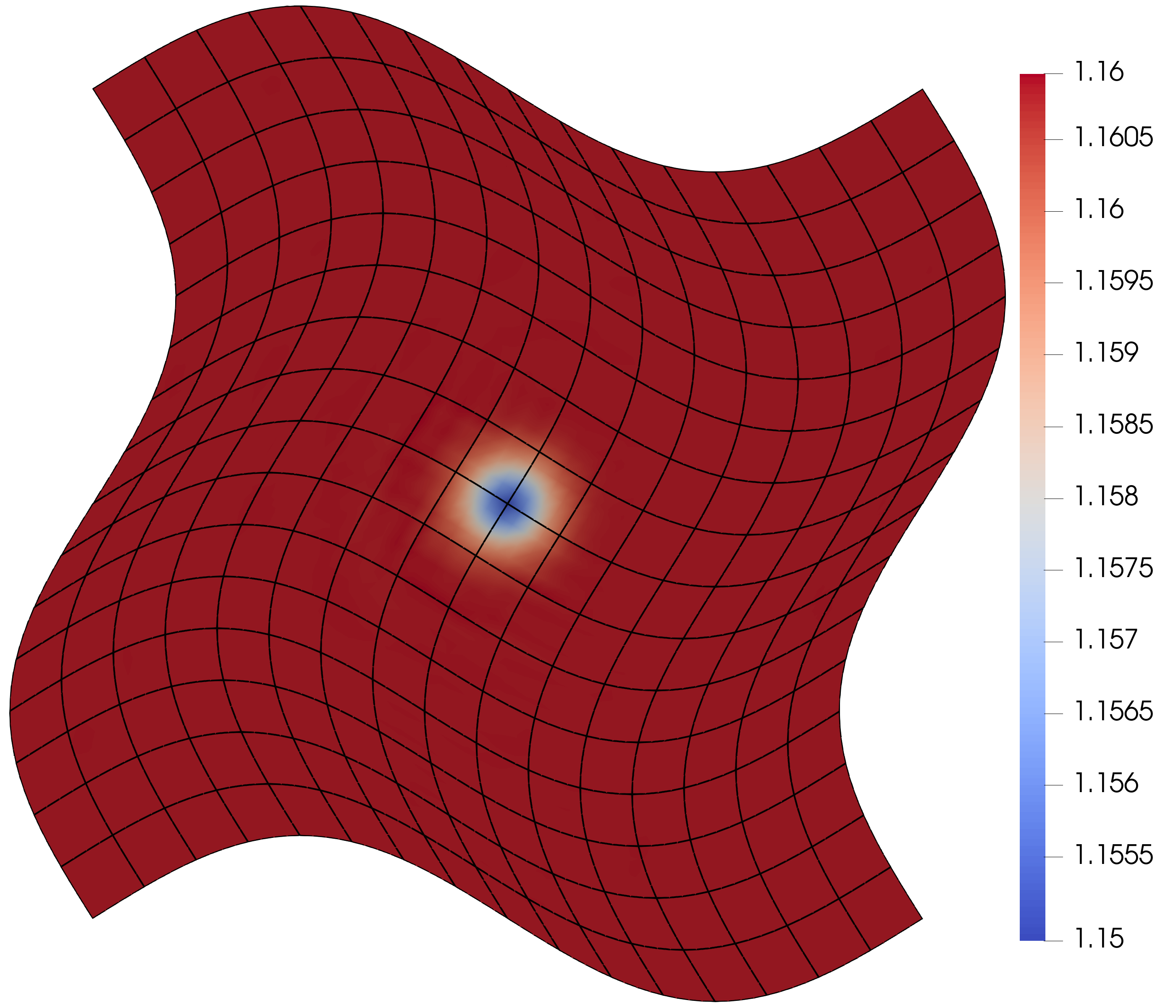}} & {\includegraphics[width=0.43\textwidth]{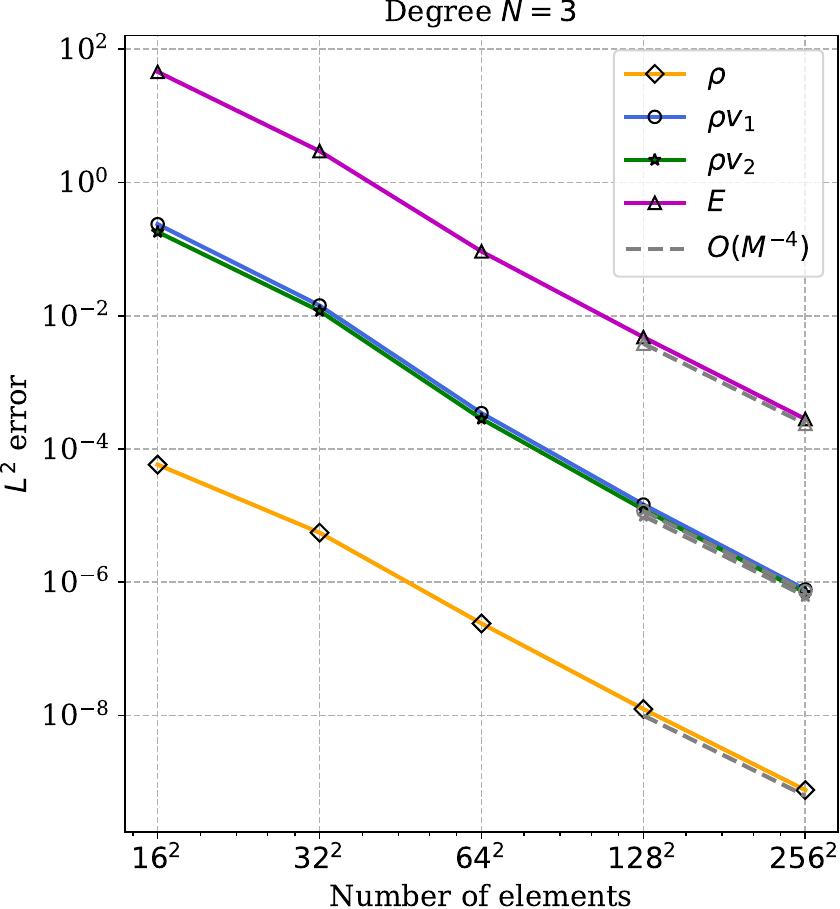}}\\
(a) & (b)
\end{tabular}
\caption{Convergence analysis for isentropic vortex problem with polynomial degree $N = 3$. (a) Density plot, (b) $L^2$ error norm of conserved variables\label{fig:isentropic}}
\end{figure}

\subsubsection{Supersonic flow over cylinder}\label{sec:cylinder}

Supersonic flow over a cylinder is computed at a free stream Mach number of $3$ with the initial condition
\[
(\rho, u, v, p) = (1.4, 3, 0, 1)
\]
Solid wall boundary conditions are used at the top and bottom boundaries. A bow shock forms which reflects across the solid walls and interacts with the small vortices forming in the wake of the cylinder.  The setup of L{\"o}hner's smoothness indicator~\eqref{eq:lohner.ind} is taken from an example of \tmverbatim{Trixi.jl}~{\cite{Ranocha2021}}
\[
(\tmverbatim{base{\_}level}, \tmverbatim{med{\_}level}, \tmverbatim{max{\_}level}) = (0, 3, 5), \qquad
(\tmverbatim{{med}{\_}{threshold}}, \tmverbatim{max{\_}threshold}) = (0.05, 0.1)
\]
where $\tmverbatim{base\_level} = 0$ refers to mesh in Figure~\ref{fig:supersonic.cylinder}a. The flow consists of a strong a shock and thus the positivity limiter had to be used to enforce admissibility. The flow behind the cylinder is highly unsteady, with reflected shocks and vortices interacting continuously. The density profile of the numerical solution at $t = 10$ is shown in Figure~\ref{fig:supersonic.cylinder} with mesh and solution polynomial degree $N = 4$ using L{\"o}hner's indicator~\eqref{eq:lohner.ind} for AMR. The AMR indicator is tracing the shocks and the vortex structures forming in the wake well. The initial mesh has 561 elements which first increase to 63000 elements followed by a fall to 39000 elements and then a steady increase to the peak of 85000 elements from which it steadily falls to 36000 elements by the end of the simulation. The mesh is refined or coarsened once every 100 time steps. In order to capture the same effective refinement, a uniform mesh will require 574464 elements.

\begin{figure}
\centering
\begin{tabular}{c}
{\includegraphics[width = 0.7\textwidth]{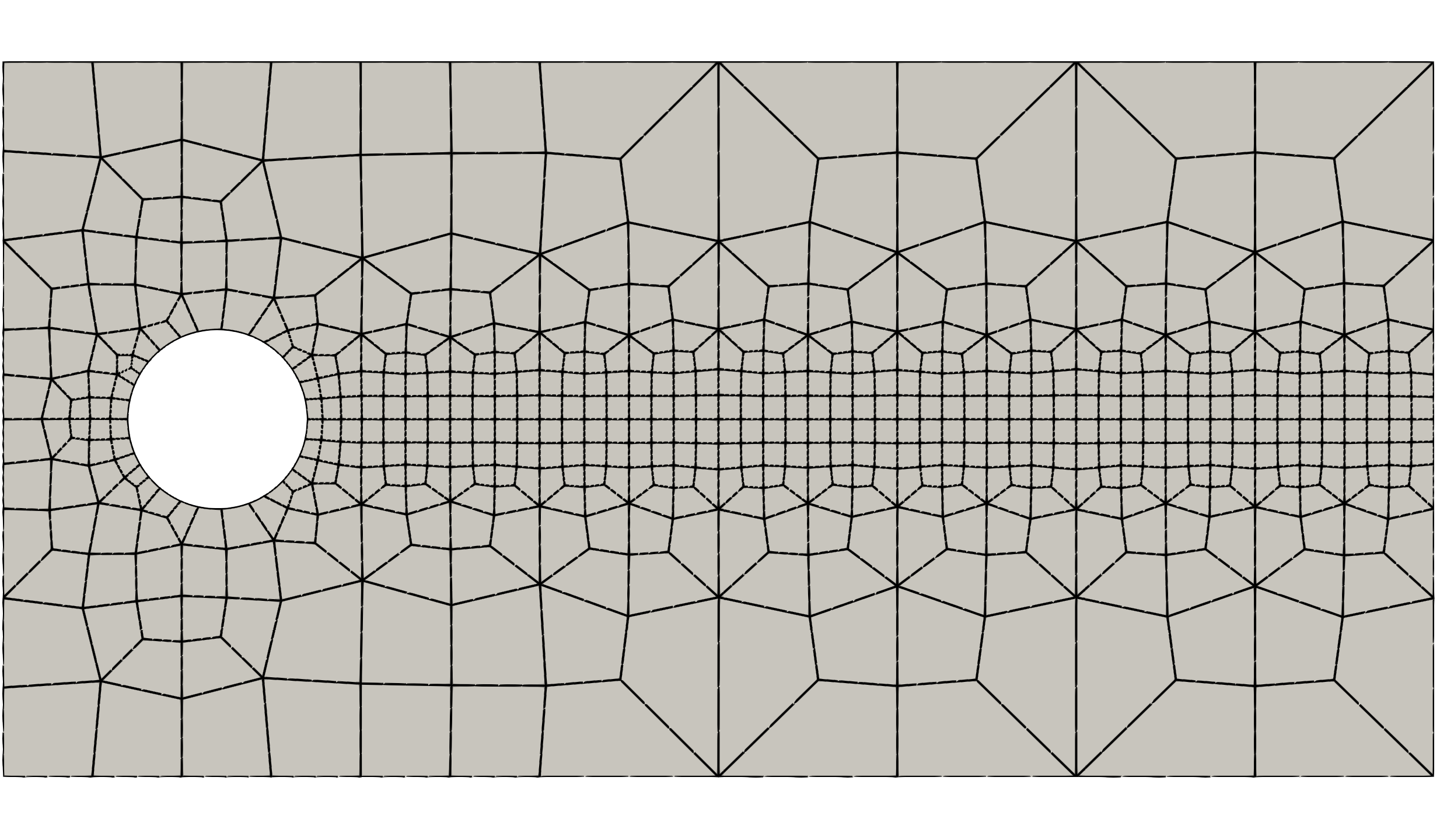}}\\
(a)\\
{\includegraphics[width = 0.7\textwidth]{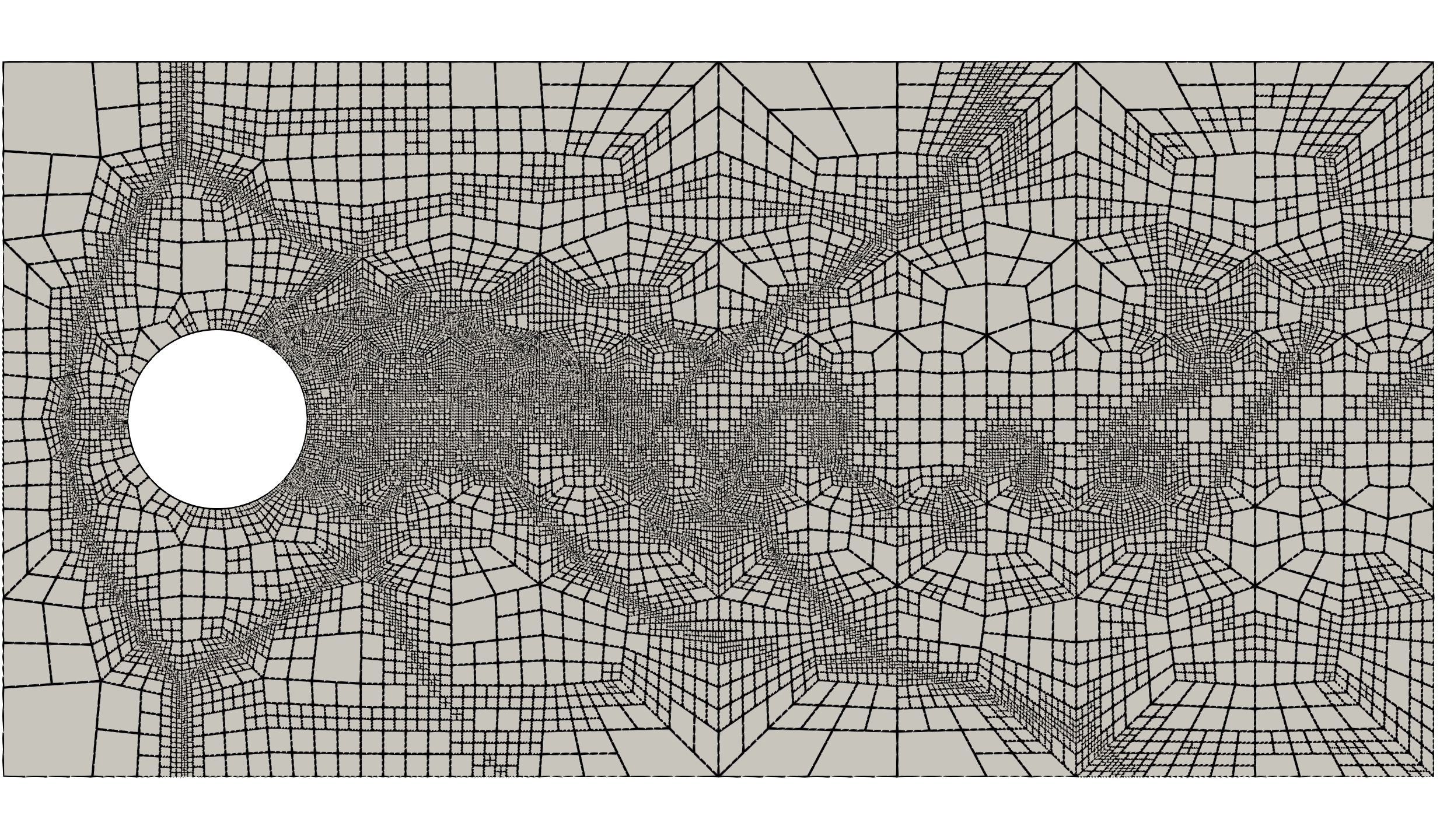}}\\
(b)\\
{\includegraphics[width = 0.7\textwidth]{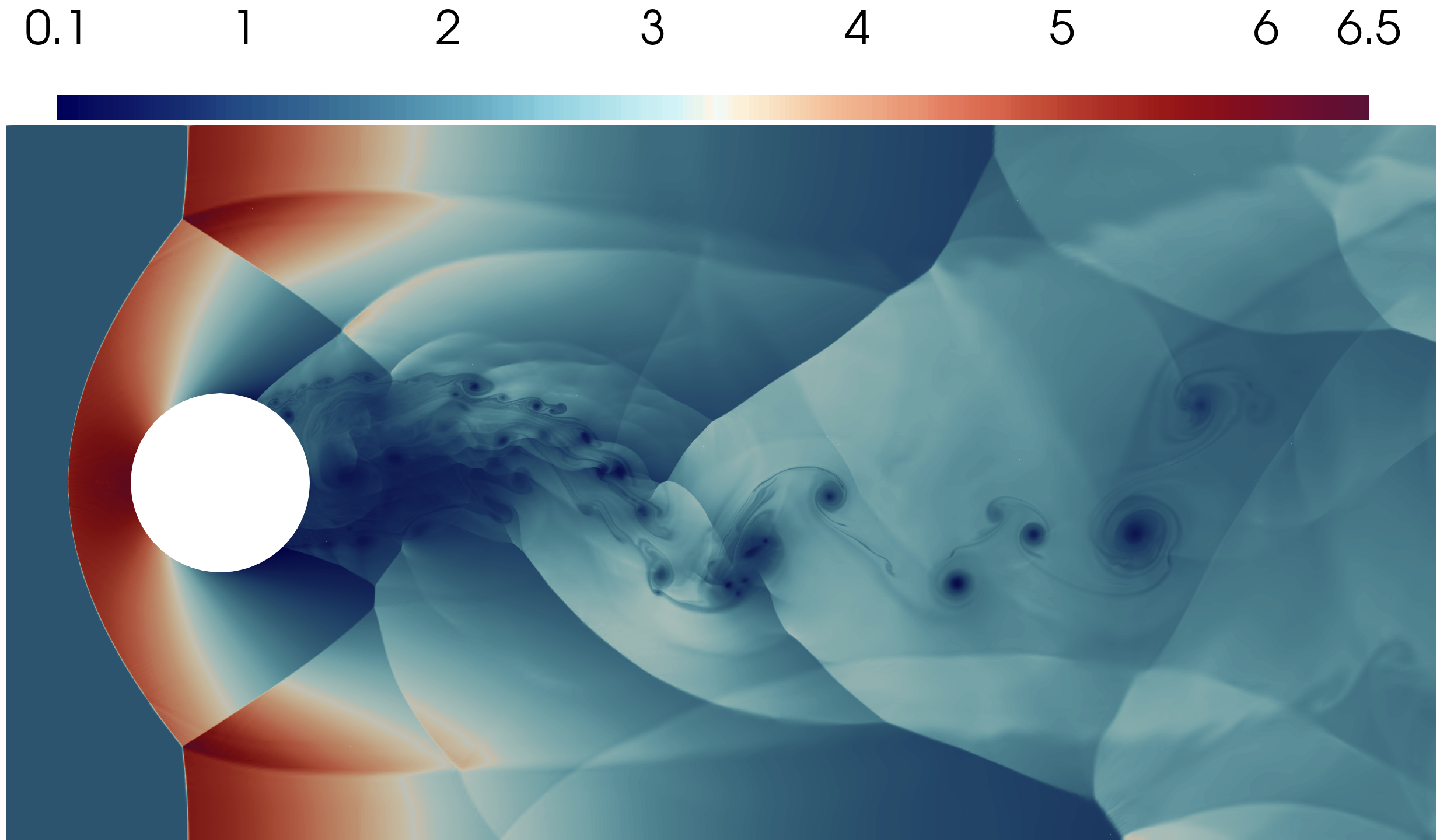}}\\
(c)
\end{tabular}
\caption{\label{fig:supersonic.cylinder}Mach 3 flow over cylinder using solution and mesh polynomial degree $N = 4$ at $t = 10$ (a) Initial mesh, (b) adaptively refined mesh at final time, (c) density plot at final time}
\end{figure}

\subsubsection{Inviscid bow shock upstream of a blunt body}\label{sec:blunt}

This test simulates steady supersonic flow over a blunt body and is taken from~{\cite{henneman2021}} which followed the description proposed by the high order computational fluid dynamics workshop~{\cite{Cenaero2017}}. The domain, also shown in Figure~\ref{fig:blunt} consists of a left and a right boundary. The left boundary is an arc of a circle with origin $(3.85, 0)$ and radius $5.9$ extended till $x = 0$ on both ends. The right boundary consists of (a) the blunt body and (b) straight-edged outlets. The straight-edged outlets are $\{ (0, y) : | y | > 0.5 \}$ extended till the left boundary arc. The blunt body consists of a front of length $1$ and two quarter circles of radius $0.5$. The domain is initialized with a Mach 4 flow, which is given in primitive variables by
\begin{equation}
(\rho, u, v, p) = (1.4, 4, 0, 1)
\end{equation}
The left boundary is set as supersonic inflow, the blunt body is a reflecting wall and the straight edges at $x = 0$ are supersonic outflow boundaries. L{\"o}hner's smoothness indicator~\eqref{eq:lohner.ind} for AMR is set up as
\[
(\tmverbatim{base{\_}level}, \tmverbatim{med{\_}level}, \tmverbatim{max{\_}level}) = (0, 1, 2), \qquad
(\tmverbatim{{med}{\_}{threshold}}, \tmverbatim{max{\_}threshold}) = (0.05, 0.1)
\]
where $\tmverbatim{base\_level} = 0$ refers to mesh in Figure~\ref{fig:blunt}a. Since this is a test case with a strong bow shock, the positivity limiter had to be used to enforce admissibility. The pressure obtained with polynomial degree $N=4$ is shown in Figure~\ref{fig:blunt} with adaptive mesh refinement performed using L{\"o}hner's smoothness indicator~\eqref{eq:lohner.ind} where the AMR procedure is seen to be refining the mesh in the region of the bow shock. The initial mesh~(Figure~\ref{fig:blunt}a) has 244 elements which steadily increases to ${\sim} 1600$ elements till $t\approx 1.5$ and then remains nearly constant as the solution reaches steady state. The mesh is adaptively refined or coarsened at every time step.
\begin{figure}
\centering
\begin{tabular}{cccc}
{\includegraphics[width = 0.095\textwidth]{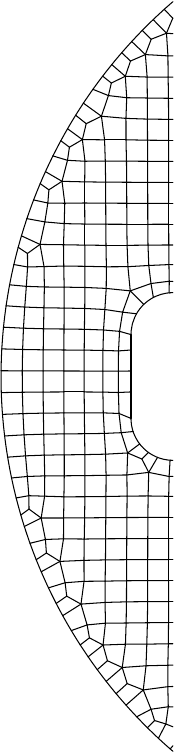}} \qquad \qquad
&
{\includegraphics[width = 0.095\textwidth]{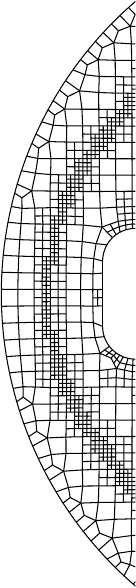}} \qquad \qquad
&
{\includegraphics[width = 0.21\textwidth]{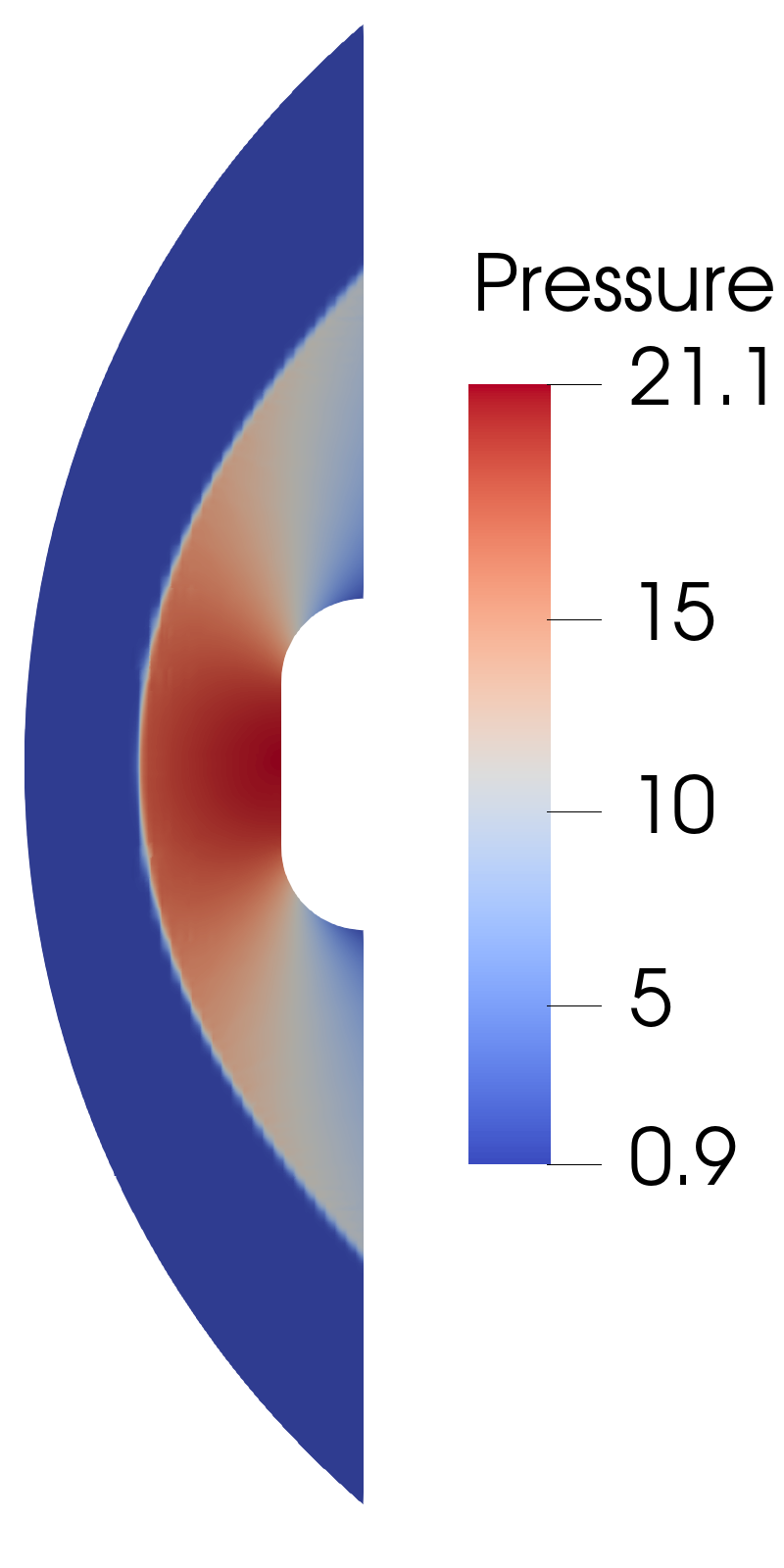}} \qquad
&
{\includegraphics[width = 0.21\textwidth]{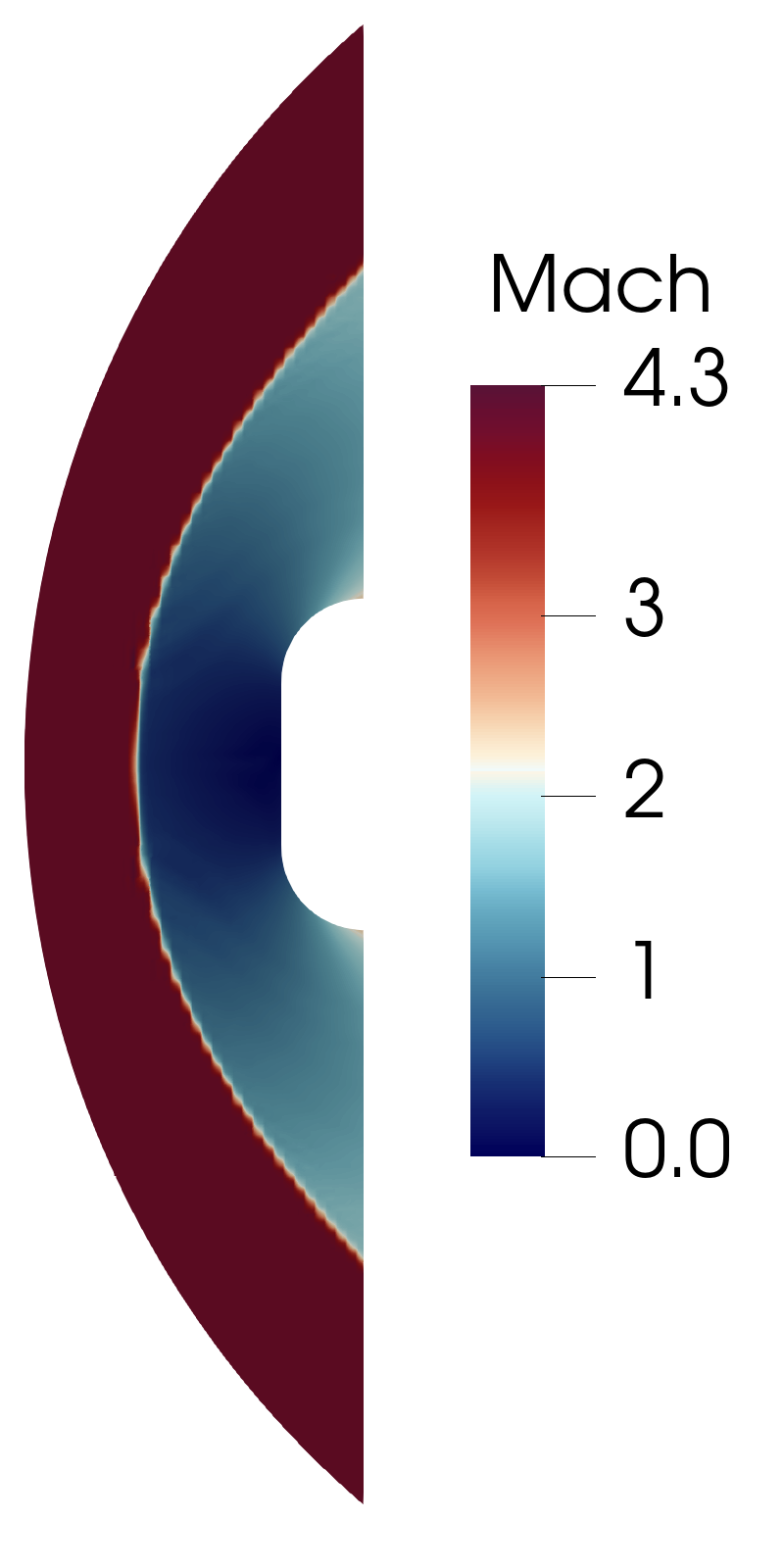}}
\\
(a) & (b) & (c) & (d)
\end{tabular}
\caption{Mach 4 flow over blunt body using polynomial degree $N=4$ showing (a) initial mesh, (b) adaptively refined mesh, (c) pressure plot, (d) Mach number plot \label{fig:blunt} }
\end{figure}

\subsubsection{Transonic flow over NACA0012 airfoil}\label{sec:naca}

This is a steady transonic flow over the symmetric NACA0012 airfoil. The initial condition is taken to have Mach number $M_0 = 0.85$ and it is given in primitive variables as
\[
(\rho, u, v, p) =
\left(
\frac{p_0}{T_0R}, U_0 \cos \theta, U_0 \sin \theta,
p_0
\right)
\]
where $p_0 = 1, T_0 = 1, R = 287.87, \theta = \pi / 180$, $U_0 = M_0 c_0$ and sound speed $c_0 = \sqrt{\gamma p_0 / \rho_0}$. The airfoil is of length $1$ unit located in the rectangular domain $[-20,20]^2$ and the initial mesh has 728 elements. We run the simulation with mesh and solution polynomial degree $N = 6$ using L{\"o}hner's smoothness indicator~\eqref{eq:lohner.ind} for AMR with the setup
\[
(\tmverbatim{base{\_}level}, \tmverbatim{med{\_}level}, \tmverbatim{max{\_}level}) = (1, 3, 4), \qquad
(\tmverbatim{{med}{\_}{threshold}}, \tmverbatim{max{\_}threshold}) = (0.05, 0.1)
\]
where $\tmverbatim{base\_level} = 1$ refers to the mesh in Figure~\ref{fig:naca.mesh}a . In Figure~\ref{fig:naca.mesh}, we show the initial and adaptively refined mesh. In Figure~\ref{fig:naca}, we show the Mach number and compare the coefficient of pressure $C_p$ on the surface of airfoil with SU2~{\cite{su2}} results, seeing reasonable agreement in terms of the values and shock locations. The AMR procedure is found to steadily increase the number of elements till they peak at ${\sim} 4200$ and decrease to stabilize at ${\sim} 3750$; the region of the shocks is being refined by the AMR process. The mesh is adaptively refined or coarsened once every 100 time steps. In order to capture the same effective refinement, a uniform mesh will require 186368 elements.
\begin{figure}
\centering
\begin{tabular}{cc}
{\includegraphics[width=0.47\textwidth]{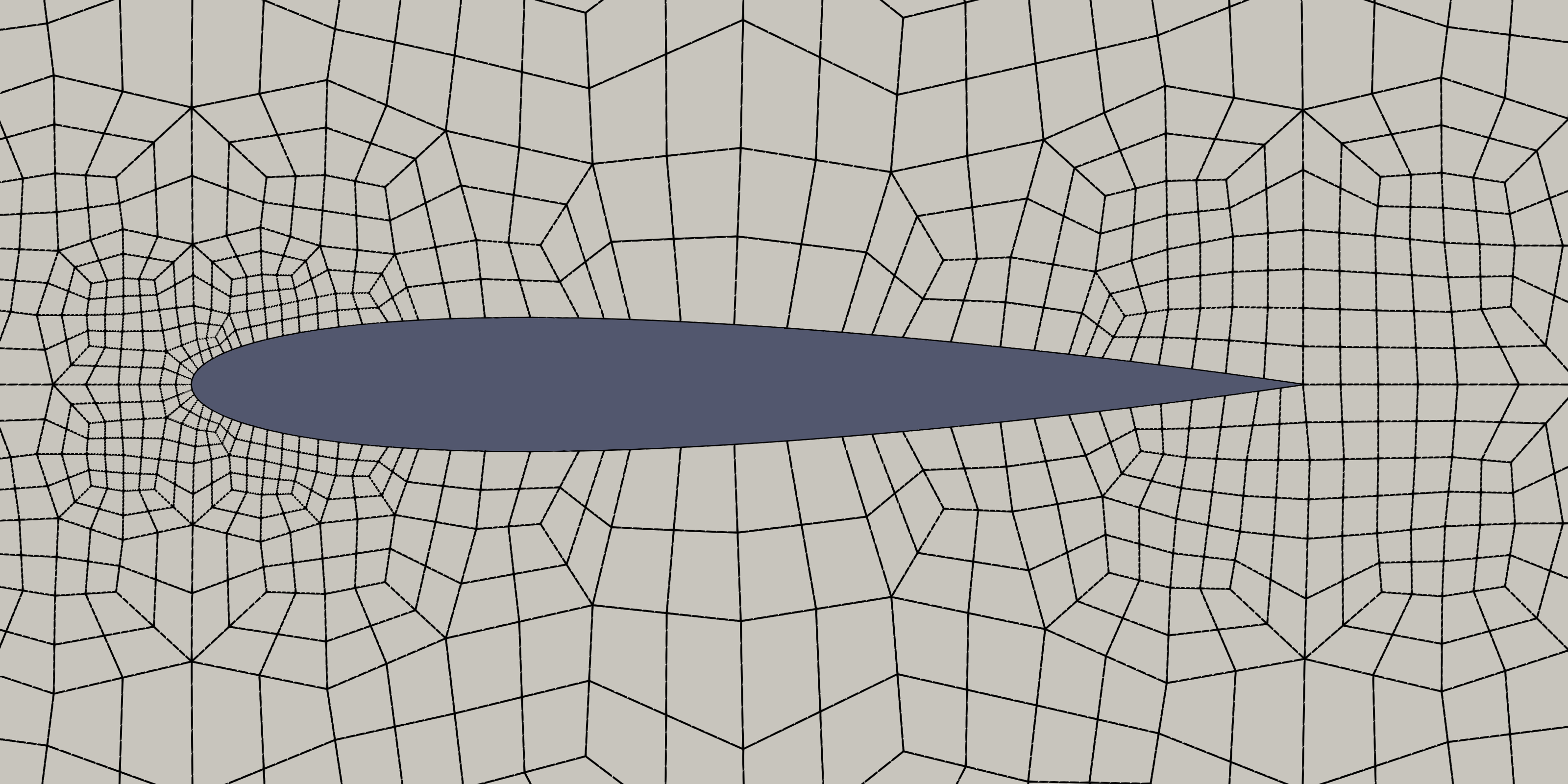}}
&
\includegraphics[width=0.47\textwidth]{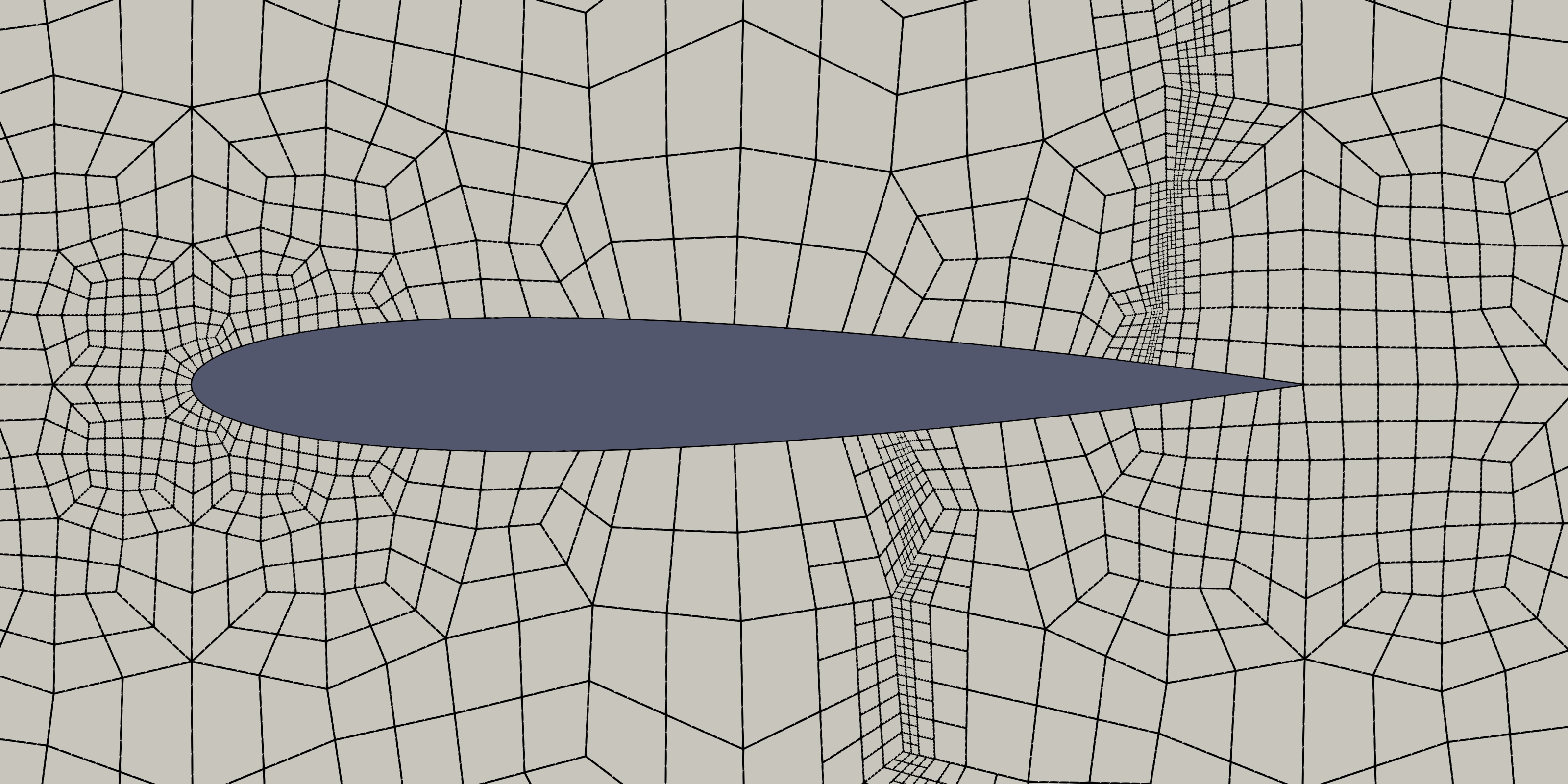} \\
(a) & (b)
\end{tabular}
\caption{\label{fig:naca.mesh}Meshes for transonic flow over NACA0012 airfoil. (a) Initial mesh (b) adaptively refined mesh}
\end{figure}

\begin{figure}
\begin{tabular}{cc}
{\includegraphics[width=0.52\textwidth]{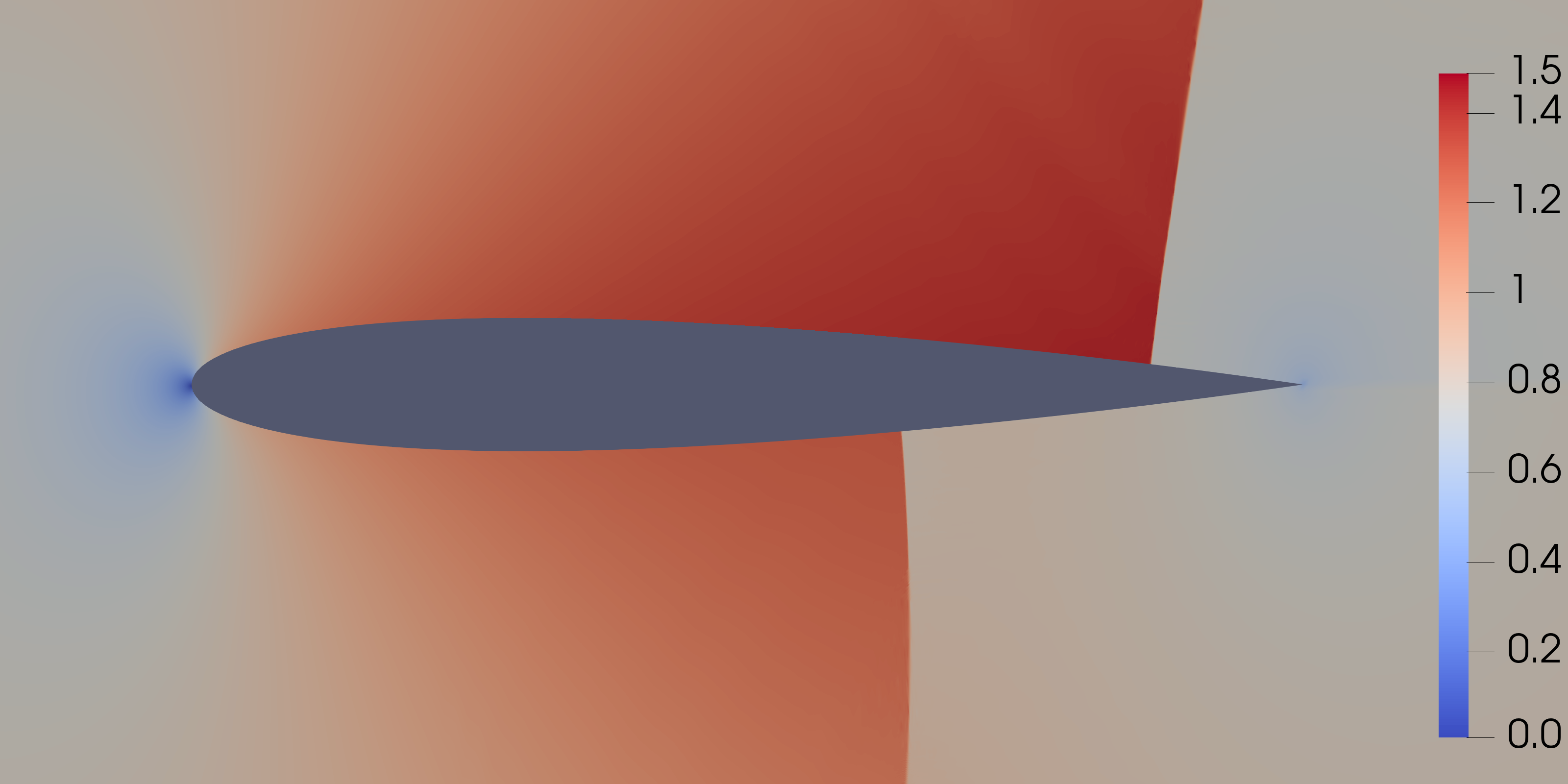}}
 &
\includegraphics[width=0.4\textwidth]{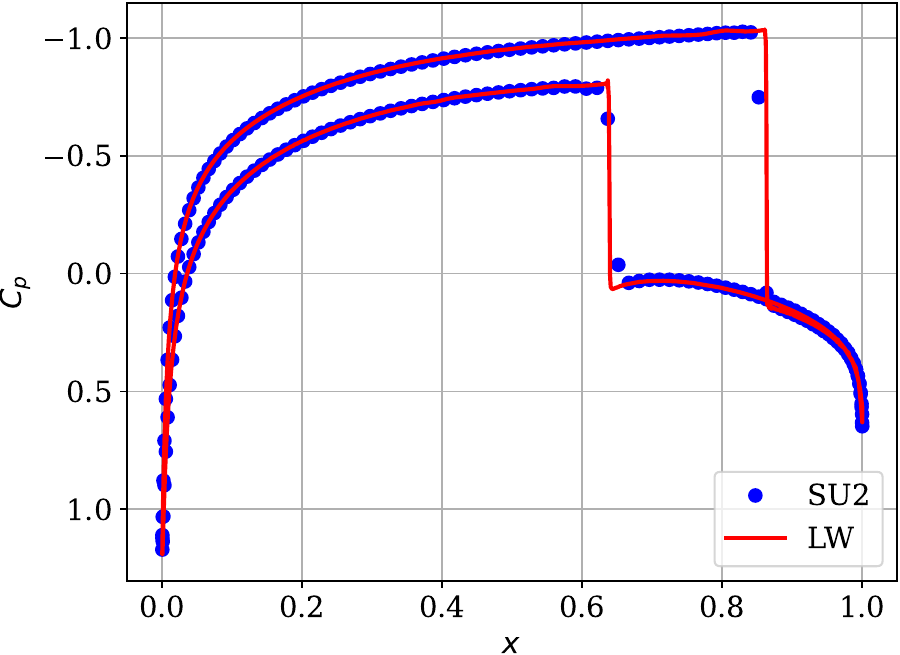} \\
(a) & (b)
\end{tabular}
\caption{\label{fig:naca}Transonic flow over airfoil using degree $N=6$ on adapted mesh (a) Mach number (b)
Coefficient of pressure on the surface of the airfoil}
\end{figure}

\subsection{Performance comparison of time stepping schemes}
In Table~\ref{tab:cfl.error}, we show comparison of total time steps needed by error~ (Algorithm~\ref{alg:time.stepping}) and CFL~\eqref{eq:cfl.time.step} based time stepping methods for test cases where non-Cartesian meshes are used. The total time steps give a complete description of the cost because our experiments have shown that error estimation procedure only adds an additional computational cost of ${\sim}4\%$. The relative and absolute tolerances $\tau_a, \tau_r$ in~\eqref{eq:error.estimator} are taken to be the same, and denoted \tmverbatim{tolE}. The iterations which are redone because of error or admissibility criterion in Algorithm~\ref{alg:time.stepping} are counted as \textit{failed} (shown in Table~\ref{tab:cfl.error} in red) while the rest as \textit{successful} (shown in Table~\ref{tab:cfl.error} in blue). The comparisons are made between the two time stepping schemes as follows - the constant $\Cs$ in~\eqref{eq:cfl.time.step} is experimentally chosen to be the largest which can be used without admissibility violation while error based time stepping is shown with \tmverbatim{tolE = 1e-6} and the best tolerance for the particular test case (which is either \tmverbatim{1e-6} or \tmverbatim{5e-6}). Note that the choice of \tmverbatim{tolE = 1e-6} is made in all the results shown in previous sections. A poor quality (nearly degenerate) mesh~(Figure~\ref{fig:blunt}b) was used in the flow over blunt body~(Section~\ref{sec:blunt}) and thus the CFL based scheme could not run till the final time $t=10$ without admissibility violation for any choice of $\Cs$. However, the error-based time stepping scheme is able to finish the simulation by its ability to redo time steps; although there are many failed time steps as is to be expected. The error-based time stepping scheme is giving superior performance with \tmverbatim{tolE = 1e-6} for the supersonic flow over cylinder and transonic flow over airfoil (curved meshes tests) with ratio of total time steps being \correction{1.51 and 1.55} respectively. However, for the forward facing step test with a straight sided quadrilateral mesh, error based time stepping with \tmverbatim{tolE = 1e-6} takes more time steps than the fine-tuned CFL based time stepping. However, increasing the tolerance to \tmverbatim{tolE = 5e-6} gives the same performance as the CFL based time stepping. By using \tmverbatim{tolE = 5e-6}, the performance of supersonic flow cylinder can be further obtained to get a ratio of \correction{2.08}. These results show the robustness of error-based time stepping and even improved efficiency in meshes with curved elements.
\begin{table}
\begin{tabular}{|c|c|c|c|c|c|}
\hline
& \begin{tabular}{c}
CFL based~\eqref{eq:cfl.time.step}\\
\end{tabular} &
\begin{tabular}{c}
Error Based (Alg. \ref{alg:time.stepping}) \\
(\tmcolor{blue}{Successful} + \tmcolor{red}{failed})
\end{tabular}
&  &  & Ratio\\
\hline
&  & \tmverbatim{tolE=1e-6} & {$\frac{\text{CFL}}{\tmverbatim{tolE=1e-6}}$} & Best
\tmverbatim{tolE} & {$\frac{\text{CFL}}{\text{Best \tmverbatim{tolE}}}$}\\
\hline
FF Step \eqref{sec:forward.step} & 5679021 &
\begin{tabular}{c}
7633954\\
$(\tmcolor{blue}{7633949} + \tmcolor{red}{5})$
\end{tabular} & 0.74  & \begin{tabular}{c}
5598338 \tmverbatim{(5e-6)}\\
(\tmcolor{blue}{5598328} + \tmcolor{red}{10})
\end{tabular} & 1.01\\
\hline
Cylinder (\ref{sec:cylinder}) & 1601725 &
\begin{tabular}{c}
1057844\\
(\tmcolor{blue}{1057710} + \tmcolor{red}{134})
\end{tabular} & 1.51 & \begin{tabular}{c}
768835 \tmverbatim{(5e-6)}\\
(\tmcolor{blue}{760051}+\tmcolor{red}{8784})
\end{tabular} & 2.08\\
\hline
Blunt body (\ref{sec:blunt}) & -  &
\begin{tabular}{c}
30527\\
(\tmcolor{blue}{29766} + \tmcolor{red}{761})
\end{tabular} & - & \begin{tabular}{c}
30527 \tmverbatim{(1e-6)} \\
(\tmcolor{blue}{29766} + \tmcolor{red}{761})
\end{tabular} & - \\
\hline
NACA0012~(\ref{sec:naca})& 7530003  & \begin{tabular}{c}
4842326\\
(\tmcolor{blue}{4842309} + \tmcolor{red}{17})
\end{tabular} & 1.55 & \begin{tabular}{c}
4842326 \tmverbatim{(1e-6)}\\
(\tmcolor{blue}{4842309} + \tmcolor{red}{17})
\end{tabular}& 1.55 \\
\hline
\end{tabular}
\caption{\label{tab:cfl.error} Number of time steps comparing error and CFL
based methods}
\end{table}

\section{Summary and conclusions}\label{sec:conclusions}

The Lax-Wendroff Flux Reconstruction (LWFR) of~{\cite{babbar2022}} has been extended to curvilinear  and dynamic, locally adapted meshes. On curvilinear meshes, it is shown that satisfying the standard metric identities gives free stream preservation for the LWFR scheme. The subcell based blending scheme of~{\cite{babbar2023admissibility}} has been extended to curvilinear meshes along with the provable admissibility preservation of~{\cite{babbar2023admissibility}} based on the idea of appropriately choosing the \tmtextit{blended numerical flux}~{\cite{babbar2023admissibility}} at the element interfaces. Adaptive Mesh Refinement has been introduced for LWFR scheme using the Mortar Element Method (MEM) of~{\cite{Kopriva1996}}.
Fourier stability analysis to compute the optimal CFL number as in~{\cite{babbar2022}} is based on uniform Cartesian meshes and does not apply to curvilinear grids. Thus, in order to use a wave speed based time step computation, the CFL number has to be fine tuned for every problem, especially for curved grids. In order to decrease the fine-tuning process, an embedded errror-based time step computation method was introduced for LWFR by taking difference between two element local evolutions of the solutions using the local time averaged flux approximations - one which is order $N + 1$ and the other truncated to be order $N$. This is the first time error-based time stepping has been introduced for a single stage evolution method for solving time dependent equations. Numerical results using compressible Euler equations were shown to validate the claims. It was shown that free stream condition is satisfied on curvilinear meshes even with non-conformal elements and that the LWFR scheme shows optimal convergence rates on domains with curved boundaries and meshes. The AMR with shock capturing was tested on various problems to show the scheme's robustness and capability to automatically refine in regions comprising of relevant features like shocks and small scale structures. The error based time stepping scheme is able to run with fewer time steps in comparison to the CFL based scheme and with less fine tuning.
\section*{Acknowledgments}

The work of Arpit Babbar and Praveen Chandrashekar is supported by the
Department of Atomic Energy, Government of India, under project
no.~12-R\&D-TFR-5.01-0520.
\section*{Additional data}
The animations of the results presented in this paper can be viewed at
\begin{center}
\href{https://www.youtube.com/playlist?list=PLHg8S7nd3rfvI1Uzc3FDaTFtQo5VBUZER}{www.youtube.com/playlist?list=PLHg8S7nd3rfvI1Uzc3FDaTFtQo5VBUZER}
\end{center}

\appendix
\section{Shock capturing and admissibility
preservation}\label{sec:shock.capturing}

 This
section consists of terminologies for admissibility preservation
and the admissibility preserving blending scheme, some of which is a review
of~{\cite{babbar2023admissibility}}.

\subsection{Admissibility preservation}

For the Euler's equations, since negative density and pressure are
nonphysical, an \tmtextit{admissible solution} is one that belongs to the
\tmtextit{admissible set} $\left\{ \uu : \rho ( \uu ), p ( \uu
) > 0 \right\}$. Since the admissibility preservation approach used in
this work can be used for more general problems, we introduce the general
notations here.

Let $\Uad \subset \re^p$ denote the convex set in which physically correct
solutions of the general conservation law~\eqref{eq:con.law} must belong; we suppose that it
can be written in terms of $K$ constraints as
\begin{equation}
\label{eq:uad.form} \Uad = \{ \uu \in \re^p : p_k (\uu) > 0, 1 \le k \le K\}
\end{equation}
where each admissibility constraint $p_k$ is concave if $p_j > 0$ for all $j <
k$. For Euler's equations, $K = 2$ and $p_1, p_2$ are density, pressure
functions respectively; the density is clearly a concave function of $\uu$ and if the density is positive then it can be easily verified that the pressure is also a concave
function of the conserved variables. The admissibility preserving property,
also known as {\em convex set preservation property}, of the conservation law can be
written as
\begin{equation}
\label{eq:conv.pres.con.law} \uu (\cdummy, t_0) \in \Uad \qquad
\Longrightarrow \qquad \uu (\cdummy, t) \in \Uad, \qquad t > t_0
\end{equation}
and thus we define an admissibility preserving flux reconstruction scheme as
follows.

\begin{definition}
\label{defn:adm.pres}The flux reconstruction scheme is said to be
admissibility preserving if
\[ \uebp^n \in \Uad, \quad \forall e, \bp \qquad \Longrightarrow \qquad
\uebp^{n + 1} \in \Uad, \quad \forall e, \bp \]
where $\Uad$ is the admissibility set of the conservation law.
\end{definition}

To obtain an admissibility preserving scheme, we exploit the weaker
admissibility preservation in means property.

\begin{definition}
\label{defn:mean.pres}The flux reconstruction scheme is said to be
admissibility preserving in the means if
\[ \uebp^n \in \Uad, \quad \forall e, \bp \qquad \Longrightarrow \qquad
\overline{\uu}_e^{n + 1} \in \Uad, \quad \forall e \]
where $\Uad$ is the admissibility set of the conservation law and
$\overline{\uu}_e$ denotes the element mean~\eqref{defn.mean}.
\end{definition}

\subsection{Blending scheme}

In this section, we explain the blending procedure which obtains admissibility
preservation in means property for LWFR scheme on curvilinear grids using
Gauss-Legendre-Lobatto solution points. The procedure is very similar to that
of~{\cite{babbar2023admissibility}} for Cartesian grids where Gauss-Legendre
solution points were used.

Let us write the LWFR update equation~\eqref{eq:lw.update} as
\begin{equation}
\Vu^{H, n + 1}_e = \Vu^n_e - \frac{\Delta t}{\Delta x_e}  \VR^H_e
\label{eq:ho.residual}
\end{equation}
where $\Vu_e$ is the vector of nodal values in the element $\Omega_e$. Suppose we also
have a lower order, non-oscillatory scheme available to us in the form
\begin{equation}
\Vu^{L, n + 1}_e = \Vu^n_e - \frac{\Delta t}{\Delta x_e}  \VR^L_e
\end{equation}
Then a blended scheme is given by
\begin{equation}
\Vu^{n + 1}_e = (1 - \alpha_e)  \Vu^{H, n + 1}_e + \alpha_e  \Vu^{L, n +
1}_e = \Vu^n_e - \frac{\Delta t}{\Delta x_e}  [(1 - \alpha_e) \VR^H_e +
\alpha_e  \VR^L_e] \label{eq:blended.scheme}
\end{equation}
where $\alpha_e \in [0, 1]$ must be chosen based on some local smoothness
indicator. If $\alpha_e = 0$, then we obtain the high order LWFR scheme, while
if $\alpha_e = 1$ then the scheme becomes the low order scheme that is less
oscillatory. In subsequent sections, we explain the details of the lower order
scheme and the design of smoothness indicators.

\subsubsection{Blending scheme in 1-D}

Let us subdivide each element $\Omega_e = [ x_{\emh}, x_{\eph} ]$
into $N + 1$ subcells associated to the solution points $\{x^e_p, p = 0, 1,
\ldots, N\}$ of the LWFR scheme. Thus, we will have $N + 2$ subfaces within
each element $\Omega_e$ denoted by $\{x^e_{\pph}, p = -1, \ldots, N\}$
where $x_{-\frac{1}{2}}^e = x_{\emh} = x_0^e$, $x_{N + \frac{1}{2}}^e =
x_{\eph} = x_N^e$. For maintaining a conservative scheme, the $p^{\tmop{th}}$
subcell is chosen so that
\begin{equation}
x_{\pph}^e - x_{\pmh}^e = w_p \Delta x_e, \qquad 0 \le p \le N \label{eq:subcell.defn}
\end{equation}
where $w_p$ is the $p^{\tmop{th}}$ quadrature weight associated with the
solution points, and $\Delta x_e = x_{\eph} - x_{\emh}$.
Figure~\ref{fig:subcells} gives an illustration of the subcells for degree $N
= 4$ case.

\begin{figure}
\centering
{\includegraphics[width=0.7\textwidth]{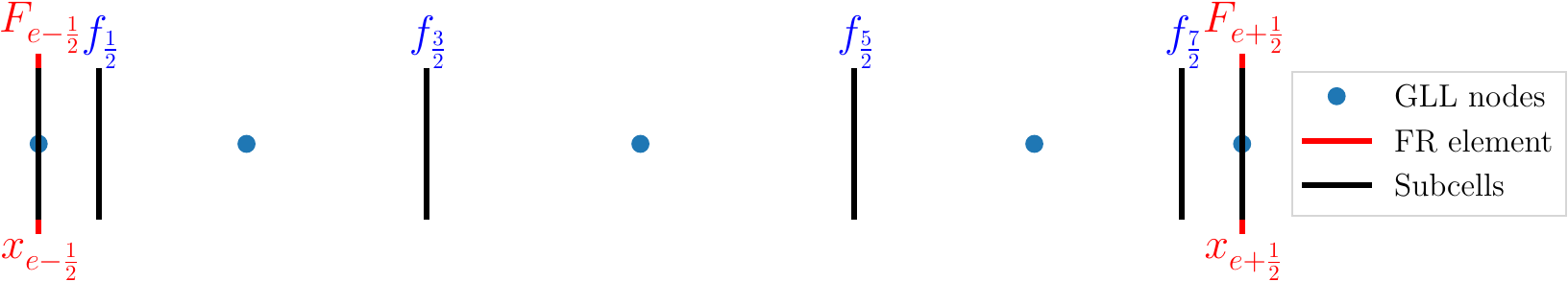}}
\caption{Subcells used by the lower order scheme for degree $N = 4$ \label{fig:subcells}}
\end{figure}

The low order scheme is obtained by updating the solution in each of the
subcells by a finite volume scheme,
\begin{equation}
\label{eq:low.order.update}
\begin{split}
\uez^{L, n + 1} & = \uez^n - \frac{\Delta t}{w_0 \Delta x_e}
[\pf_{\frac{1}{2}}^e - \F_{\emh}] \\
\uep^{L, n + 1} & = \uep^n - \frac{\Delta t}{w_p \Delta x_e}
[\pf_{\pph}^e - \pf_{\pmh}^e], \qquad 1 \le p \le N - 1\\
\ueN^{L, n + 1} & = \ueN^n - \frac{\Delta t}{w_N \Delta x_e}  [\F_{\eph} -
\pf_{\Nmh}^e]
\end{split}
\end{equation}
The fluxes $\pf_{\pph}^e = \pf ( \uu_p^n, \uu^n_{p + 1} )$ are
first order numerical fluxes and $\F_{\eph}$ is the \tmtextit{blended
numerical flux} which is a convex combination of the time averaged numerical
flux~\eqref{eq:rusanov.flux.lw} and a lower order flux $\pf_{\eph} = \pf
(\uu_{\eph}^{n -}, \uu_{\eph}^{n +}) = \pf
(\uu_{e,N}, \uu_{e+1,0})$. The same blended numerical flux
$\F_{\eph}$ is used in the high order LWFR residual~(\ref{eq:ho.residual},
\ref{eq:lwfr.update.curvilinear}); see Remark~1 of~\cite{babbar2023admissibility} for why
it is crucial to do so to ensure conservation. In this work, Rusanov's flux~{\cite{Rusanov1962}} will
be used for the inter-element fluxes and for fluxes in the lower order scheme.
The element mean value obtained by the low order scheme, high order scheme and the blended scheme satisfy the conservative property
\begin{equation}
\overline{\uu}_e^{L, n + 1} = \overline{\uu}_e^{H, n + 1} = \overline{\uu}_e^{n + 1} = \overline{\uu}_e^n - \frac{\Delta t}{\Delta x_e} (\F_{\eph} - \F_{\emh})
\label{eq:low.order.cell.avg.update}
\end{equation}

The inter-element fluxes $\F_{\epmh}$ are used both in the low and high order
schemes at $x_{\eph} = x_N^e, x_{\emh} = x_0^N$ respectively, where both schemes have a solution point and an
element interface. It has to be chosen carefully to balance accuracy, robustness and ensure admissibility preservation. The following natural initial guess is made and then further corrected to enforce admissibility, as explained in Section~\ref{sec:flux.correction}
\begin{equation}
\F_{\eph} = (1 - \alpha_{\eph})  \F_{\eph}^{\tmop{LW}} + \alpha_{\eph}
\pf_{\eph}, \qquad \alpha_{\eph} \in [0, 1] \label{eq:Fblend}
\end{equation}
where $\F_{\eph}^{\tmop{LW}}$ is the high order inter-element time-averaged
numerical flux of the LWFR scheme~\eqref{eq:rusanov.flux.lw} and $\pf_{\eph}$
is a lower order flux at the face $x_{\eph}$ shared between FR elements and
subcells. The coefficient $\alpha_{\eph}$ is given by $\frac{\alpha_e + \alpha_{e+1}}{2}$ where $\alpha_e$ is the blending coefficient computed with a smoothness indicator~(Section~\ref{sec:smooth.ind}). The reader is referred to Section 3.2 of~\cite{babbar2023admissibility} for more details.

\begin{remark}
\label{rmk:common.contri}{\tmdummy}
The contribution to $\VR^L_e, \VR^H_e$ of the flux $\F_{\eph}$ has
coefficients given by $\frac{\Delta t}{w_N \Delta x_e}, \frac{\Delta
t}{\Delta x_e} g_R' (\xi_N)$ respectively, as can be seen
from~(\ref{eq:low.order.update}, \ref{eq:lwfr.update.curvilinear}). Since
we use $g_2$ correction functions with Gauss-Legendre-Lobatto solution
points, we have from equivalence of FR and DG, $g_R' (\xi_N) = \ell_N
(1) / w_N = 1 / w_N$. Thus, the coefficient is the same for both higher
and lower order residuals and we add the contribution without a blending
coefficient. This is different from the case of Gauss-Legendre solution
points used in the blending scheme of~{\cite{babbar2023admissibility}}.
\end{remark}

\subsubsection{Blending scheme for curvilinear grids}

\begin{figure}
\centering
{\includegraphics[width=0.8\textwidth]{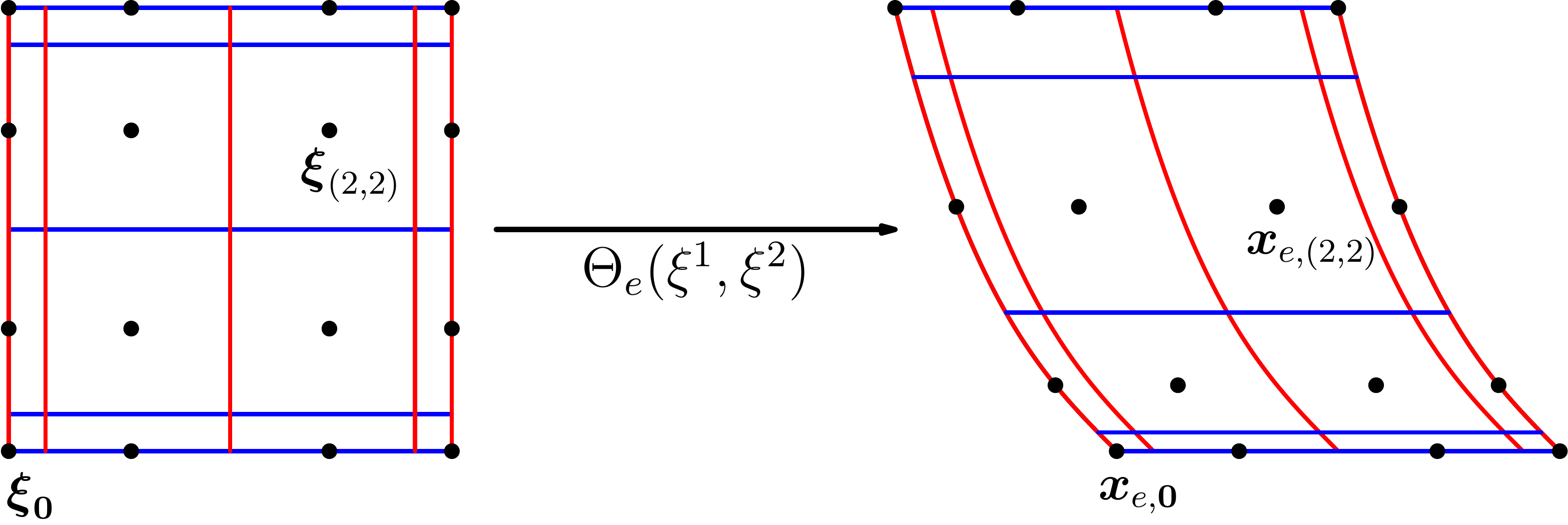}}
\caption{\label{fig:curved.subcells}Subcells in a curved element}
\end{figure}

The subcells for a curved element will be defined by the reference map, as
shown in Figure~\ref{fig:curved.subcells}. As in Appendix B.3
of~{\cite{henneman2021}}, the finite volume formulation on subcells is obtained
by an integral formulation of the transformed conservation
law~\eqref{eq:transformed.conservation.law}. In the reference element, consider subcells $\Cijk$ of size
$w_{\bp} = \prod_{i = 1}^d w_{p_i}$ with solution point $(\xi_{p_i})_{i =
1}^d$ corresponding to the multi-index $\bp = (p_i)_{i = 1}^d$ where $p_i \in
\{ 0, 1 \ldots, N \}$. Fix a subcell $C = \Cijk$ around the solution point $\vxi =
(\xi_{p_i})_{i = 1}^d$ and denote $\vxi_i^{L / R}$ as
in~\eqref{eq:xis.notation}. Integrate the conservation law on the fixed subcell $C$
\[ \int_C J \uu_t  \ud V + \int_C \nabla_{\vxi} \cdot \tf  \ud V = \bzero \]
Next perform one point quadrature in the first term and apply Gauss divergence
theorem on the second term to get
\begin{equation}
J_{e, \bp}  \dv{\uu_{\bp}}{t} w_{\bp} + \int_{\partial C} \tf \cdot \bnr
\ud A = \bzero
\end{equation}
where $\bnr$ is the reference normal vector on the subcell surface. Now
evaluate this surface integral by approximating fluxes in each direction with
numerical fluxes
\begin{equation}
\int_{\partial C} \tf \cdot \bnr  \ud A = \sum_{i = 1}^d
\frac{w_{\bp}}{w_{p_i}}  [ ( \tf^{\delta}_C \cdot \bnr_{R,i}
)^{\ast} ( \vxi_i^R ) + ( \tf^{\delta}_C \cdot \bnr_{L,i}
)^{\ast} ( \vxi_i^L ) ], \qquad \bnr_{R,i} = \be_i, \qquad \bnr_{L,i} = -\be_i
\label{eq:lower.order.flux.curved}
\end{equation}
The explicit lower order method using forward Euler update is given by
\begin{equation}
\uu^{n + 1}_{\bp} = \uu^n_{\bp} - \frac{\Delta t}{J_{e, \bp}}  \sum_{i
= 1}^d \frac{1}{w_{p_i}}  [ ( \tf^{\delta}_{\Cijk} \cdot \bnr_{R,i}
)^{\ast} ( \vxi_i^R ) + ( \tf^{\delta}_{\Cijk} \cdot
\bnr_{L,i} )^{\ast} ( \vxi_i^L ) ]
\label{eq:lower.order.curved}
\end{equation}
For the subcells whose interfaces are not shared by the FR element, the fluxes
are computed, following~{\cite{henneman2021}}, as
\begin{equation}
\label{eq:lo.numflux}
\begin{gathered}
( \tf^{\delta}_{\Cijk} \cdot \bnr_{R,i} )^{\ast} ( \vxi_i^R
) = \left\| (\bn_{R,i})_{\bp} \right\|  \pf^{\ast} \left( \uu_{\bp},
\uu_{\bp_{i +}}, \frac{(\bn_{R,i})_{\bp}}{\left\| (\bn_{R,i})_{\bp} \right\|}
\right) \\
( \tf^{\delta}_{\Cijk} \cdot \bnr_{L,i})^{\ast}
( \vxi_i^L ) = \| (\bn_{L,i})_{\bp} \|  \pf^{\ast} \left(
\uu_{\bp_{i -}}, \uu_{\bp}, \frac{(\bn_{L,i})_{\bp}}{\| (\bn_{L,i})_{\bp}
\|} \right) \\
( \bp_{i \pm} )_m = \begin{cases}
p_m \qquad & m \neq i\\
p_{i \pm 1}  & m = i
\end{cases}
\end{gathered}
\end{equation}
where $(\bn_{s,i})_{\bp}$ is the normal vector of subcell $C_{\bp}$ in direction
$i$ and side $s \in \{ L, R \}$. The numerical fluxes~\eqref{eq:lo.numflux}
are taken to be Rusanov's flux~\eqref{eq:rusanov.flux}
\begin{equation}
\tf^{\ast} ( \uu^-, \uu^+, \bn ) = \frac{1}{2}  [ ( \pf
\cdot \bn ) ( \uu^+ ) + ( \pf \cdot \bn  )
( \uu^- ) ] - \frac{\lambda}{2}  ( \uu^+ - \uu^-
) \label{eq:fo.rusanov}
\end{equation}
At the interfaces shared by FR elements, the first order numerical flux is
computed by setting $\uu^{\pm}$ in~\eqref{eq:fo.rusanov} to element trace
values as in~\eqref{eq:rusanov.flux}. However, the lower order residual needs to be computed using the same inter-element flux as the higher order scheme at interfaces of
the Flux Reconstruction (FR) elements. Thus, for example, for an element $\Omega_e$
at solution point $\vxi = \vxi_{\bp}$ with $\bp = \bzero$, the subcell update
will be given by
\begin{equation}
\uu_{e, \bzero}^{n + 1} = \uu_{e, \bzero}^n - \frac{\Delta t}{J_{e,
\bp}}  \sum_{i = 1}^d \frac{1}{w_{p_i}}  [ (
\tf^{\delta}_{C_{\bzero}} \cdot \bnr_{R,i} )^{\ast} ( \vxi_i^R
) + ( \tF^{\delta}_e \cdot \bnr_{L,i} )^{\ast} ( \vxi_i^L
) ] \label{eq:blended.flux.in.curved}
\end{equation}
where $( \tF^{\delta}_e \cdot \bnr_i )^{\ast} ( \vxi_i^L
)$ is the blended numerical flux and is computed by taking a convex
combination of the lower order flux chosen as in~\eqref{eq:rusanov.flux} and
the time averaged flux~\eqref{eq:rusanov.flux.lw}. An initial guess is made as
in 1-D~\eqref{eq:Fblend} and then further correction is performed to ensure
admissibility, as explained in Section~\ref{sec:flux.correction.curved}. Other
subcells neighbouring the element interfaces will also use the blended
numerical fluxes at the element interfaces and thus have an update
similar to~\eqref{eq:blended.flux.in.curved}. Then, by multiplying each update
equation of each subcell $\bp$ by $w_{\bp}$ and summing over $\bp$, the
conservation property is obtained
\begin{equation}
\overline{\uu}_e^{L, n + 1} = \sum_{\bp} \uebp^{L, n + 1} w_{\bp} =
\overline{\uu}_e^n - \frac{\Delta t}{| \Omega_e |}  \left( \sum_{i =
1}^d \int_{\Oip} ( \tF^{\delta}_e \cdot \bnr_{R,i} )^{\ast}  \ud
S_{\vxi} + \int_{\Oim} ( \tF^{\delta}_e \cdot \bnr_{L,i} )^{\ast}
\ud S_{\vxi} \right) \label{eq:low.order.cell.avg.update.curved}
\end{equation}
Since we also have the conservation property in the higher order
scheme~\eqref{eq:conservation.lw}, the blended scheme will be conservative,
analogous to the 1-D case~\eqref{eq:low.order.cell.avg.update}.

The expressions for normal vectors on the subcells needed to
compute~\eqref{eq:lo.numflux} are taken from Appendix B.4
of~{\cite{henneman2021}} where they were derived by equating the high order
flux difference and Discontinuous Galerkin split form. We directly state the
normal vectors here, denoting $(\bn_{R,i})_{\bp}$ as the outward normal direction in
subcell $\Cijk$ along the positive $i$ direction
\[
(\bn_{R,i})_{\bp} = \II( J \ba^i )_{\bp_{i| 0}} + \sum_{l = 0}^{p_i} w_l
\partial_{\xii}  \II( J \ba^i )_{\bp_{i| \nocomma l}},  \qquad
( \bp_{i \nocomma |l} )_m = \begin{cases}
p_m \qquad & m \neq i\\
p_l &  m = i
\end{cases}
\]
where $\{ w_l \}_{l = 0}^N$ are quadrature weights corresponding
to solution points, $\II$ is the approximation operator for metric terms~\eqref{eq:metric.identity.contravariant.poly}, and $(\bn_{L,i})_{\bp}$ can be obtained by the relation $(\bn_{L,i})_{\bp} = -(\bn_{R,i})_{\bp_{i-}}$, where $\bp_{i-}$ was defined in~\eqref{eq:lo.numflux}.

\paragraph{Free-stream preservation.} To show the free stream preservation of the lower order scheme with the chosen normal vectors, we consider a constant initial state $\uu = \bc$ and show that the finite volume residual will be zero. A constant state implies that the time average of the contravariant flux will be the contravariant flux itself~\eqref{eq:time.avg.is.physical.flux}. Thus, all numerical fluxes including element interface fluxes are first order fluxes like in~\eqref{eq:lo.numflux} and the residual at $\bp$ in direction $i$ is given by
\begin{align*}
\frac{\pf ( \bc )}{w_{p_i}} \cdot ( (\bn_{R,i})_{\bp} +
(\bn_{L,i})_{\bp} ) & = \frac{\pf ( \bc )}{w_{p_i}} \cdot
( \II( J \ba^i )_{\bp_{i \nocomma |0}} + \sum_{l = 0}^{p_i} w_l
\partial_{\xii}  \II( J \ba^i )_{\bp_{i| \nocomma l}} - \II( J \ba^i )_{\bp_{i \nocomma |0}} - \sum_{l = 0}^{p_i - 1} w_l
\partial_{\xii}  \II( J \ba^i )_{\bp_{i \nocomma |l}} )\\
& = \pf ( \bc ) \cdot \partial_{\xii}  \II( J \ba^i
)_{\bp}
\end{align*}
The residuals in other directions give similar terms and summing them gives
\[ \pf ( \bc ) \cdot \sum_{i = 1}^d \pdv{}{\xii}  \II ( J
\ba^i )_{\bp} = \bzero \]
by the metric identities, thus satisfying the free stream preservation condition.

\subsubsection{Smoothness indicator}\label{sec:smooth.ind}

The smoothness of the numerical solution is assessed by writing the degree $N$ polynomial within each element in terms of an orthonormal basis like Legendre polynomials and then analyzing the decay of the coefficients~{\cite{Persson2006,klockner2011,henneman2021}}. For a system of PDE, the orthonormal expansion of a derived quantity $q(\uu)$ is used; a good choice for Euler's equations is the product of density and pressure~\cite{henneman2021} which depends on all the conserved quantities.

Let $q = q (\uu)$ be the quantity used to measure the solution smoothness. With
$\{ L_j \}_{j \subindex 0}^N$ being the 1-D Legendre polynomial basis of
degree $N$, taking tensor product gives the degree $N$ Legendre basis
\[ \Leg_{\bp} ( \vxi ) = \prod_{i = 1}^d \Leg_{p_i} ( \xii
), \qquad p_i \in \{ 0, 1, \ldots, N \} \]
The Legendre basis representation of $q$ can be obtained as
\[ q_h (\vxi) = \sum_{\bp} \hat{q}_{\bp} L_{\bp} (\vxi), \quad \vxi \in \Oo,
\qquad \hat{q}_{\bp} = \int_{\Oo} q (\uu^{\delta} (\vxi)) L_j (\vxi)  \ud
\vxi \]
The Legendre coefficients $\left\{ \hat{q}_{\bp} \right\}$ are computed using
the quadrature induced by the solution points,
\[ \hat{q}_{\bp} = \sum_{\bq} q (\uebq) L_{\bp} (\vxi_{\bq}) w_{\bq} \]
Define
\[ \mathbb{S}_K = \sum_{\bp, p_i \leq K} \hat{q}_{\bp}^2 \]
which the measures the "energy" in $q_h$. Then the energy contained in highest modes relative to the total energy of the
polynomial is computed as follows
\[ \en = \max \left\{ \frac{\mathbb{S}_N -\mathbb{S}_{N - 1}}{\mathbb{S}_N},
\frac{\mathbb{S}_{N - 1} -\mathbb{S}_{N - 2}}{\mathbb{S}_{N - 1}} \right\}
\]
In 1-D, this simplifies to the expression
of~{\cite{henneman2021,babbar2023admissibility}}
\[ \en = \max \left \{ \frac{\hat{q}_N^2}{\sum_{j = 0}^N \hat{q}_j^2},
\frac{\hat{q}_{N - 1}^2}{\sum_{j = 0}^{N - 1} \hat{q}_j^2} \right \} \]
The $N^{\tmop{th}}$ Legendre coefficient $\hat{q}_N$ of a function which is in
the Sobolev space $H^2$ decays as $O (1 / N^2)$ (see Chapter 5, Section 5.4.2
of~{\cite{Canuto2007}}). We consider smooth functions to be those whose
Legendre coefficients $\hat{q}_N$ decay at rate proportional to $1 / N^2$ or
faster so that their squares decay proportional to $1 /
N^4$~{\cite{Persson2006}}. Thus, the following dimensionless threshold for
smoothness is proposed in~{\cite{henneman2021}}
\[ \thresh (N) = a \cdot 10^{- c (N + 1)^{\nosymbol 4}} \]
where parameters $a = \half$ and $c = 1.8$ are obtained through numerical
experiments. To convert the highest mode energy indicator $\en$ and threshold
value $\thresh$ into a value in $[0, 1]$, the logistic function is used
\[ \tilde{\alpha} (\en) = \frac{1}{1 + \exp ( - \frac{s}{\thresh} (\en -
\thresh) )} \]

The sharpness factor $s$ was chosen to be $s = 9.21024$ so that blending
coefficient equals $\alpha = 0.0001$ when highest energy indicator $\en = 0$.
In regions where $\tilde{\alpha} = 0$ or $\tilde{\alpha} = 1$, computational
cost can be saved by performing only the lower order or higher order scheme
respectively. Thus, the values of $\alpha$ are clipped as
\[ \alpha_e \assign \begin{cases}

0, \quad & \text{if } \tilde{\alpha} < \alpha_{\min}\\
\tilde{\alpha}, & \text{if } \alpha_{\min} \le \tilde{\alpha} \le 1 -
\alpha_{\min}\\
1, & \text{if } 1 - \alpha_{\min} < \tilde{\alpha}
\end{cases} \]
with $\alpha_{\min} = 0.001$. Finally, since shocks can spread to the
neighbouring cells as time advances, some smoothening of $\alpha$ is performed as
\begin{equation}
\alpha_e^{\tmop{final}} = \max_{E \in \mathcal{E}_e} \left\{ \alpha_e, \half
\alpha_E \right\} \label{eq:smoothing}
\end{equation}
where $\mathcal{E}_e$ denotes the set of elements sharing a face with $\Omega_e$.

\subsection{Flux limiter for admissibility
preservation}\label{sec:flux.correction}

We first review the flux limiting process for admissibility preservation from~{\cite{babbar2023admissibility}} for 1-D and then do a natural extension to curvilinear meshes. The first step in obtaining an admissibility preserving blending scheme is to ensure that the lower order scheme preserves the admissibility set $\Uad$. This is always true if all the fluxes in the lower order method are computed with an admissibility preserving low order finite volume method. But the LWFR scheme uses a time average numerical flux and maintaining conservation requires that we use the same numerical flux at the element interfaces for both lower and higher order schemes (Remark~1 of~\cite{babbar2023admissibility}). To maintain accuracy and admissibility, we carefully choose a blended numerical flux $\F_{\eph}$ as in~\eqref{eq:Fblend} but this choice may not ensure admissibility and further limitation is required. Our proposed procedure for choosing the blended numerical flux will give us an admissibility preserving lower order scheme. As a result of using the same numerical flux at element interfaces in both high and low order schemes, element means of both schemes are the same (Theorem~\ref{thm:lwfr.admissibility}). A consequence of this is that our scheme now preserves admissibility of element means and thus we can use the scaling limiter of~{\cite{Zhang2010b}} to get admissibility at all solution points.

The theoretical basis for flux limiting can be summarized in the following
Theorem~\ref{thm:lwfr.admissibility}.

\begin{theorem}
\label{thm:lwfr.admissibility}Consider the LWFR blending
scheme~\eqref{eq:blended.scheme} where low and high order schemes use the
same numerical flux $( \tF^{\delta}_e \cdot \bnr_i )^{\ast}
( \vxi_i^s )$ at every element interface and the lower order
residual is computed using the first order finite volume
scheme~\eqref{eq:lower.order.curved}. Then the following can be said about
admissibility preserving in means property (Definition~\ref{defn:mean.pres})
of the scheme:
\begin{enumerate}
\item element means of both low and high order schemes are same, and thus
the blended scheme~\eqref{eq:blended.scheme} is admissibility preserving
in means if and only if the lower order scheme is admissibility preserving
in means;

\item if the blended numerical flux $( \tF^{\delta}_e \cdot \bnr_i
)^{\ast} ( \vxi_i^s )$ is chosen to preserve the
admissibility of lower-order updates at solution points adjacent to the
interfaces, then the blending scheme~\eqref{eq:blended.scheme} will
preserve admissibility in means.
\end{enumerate}
\end{theorem}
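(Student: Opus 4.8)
The plan is to prove the two parts in sequence: the first is purely algebraic and follows from the shared interface flux, while the second combines standard first-order finite volume admissibility with the convexity of $\Uad$.

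For the first claim, I would start from the two element-mean update formulas already established: the high order scheme obeys~\eqref{eq:conservation.lw} and the low order scheme obeys~\eqref{eq:low.order.cell.avg.update.curved}. These two formulas are identical apart from the interface numerical flux, and by hypothesis both schemes use the \emph{same} flux $(\tF^{\delta}_e \cdot \bnr_i)^{\ast}(\vxi_i^s)$ on every face $\Oip, \Oim$. Hence $\overline{\uu}_e^{L,n+1} = \overline{\uu}_e^{H,n+1}$. Since the element mean is a linear functional of the nodal values, applying it to the convex combination~\eqref{eq:blended.scheme} gives $\overline{\uu}_e^{n+1} = (1-\alpha_e)\,\overline{\uu}_e^{H,n+1} + \alpha_e\,\overline{\uu}_e^{L,n+1}$, which collapses to the common value. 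All three means therefore coincide, and by Definition~\ref{defn:mean.pres} the blended scheme is admissibility preserving in means if and only if the low order scheme is, which is the asserted equivalence.

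For the second claim, I would show that the low order scheme itself is admissibility preserving in means and then invoke part one. Each updated subcell value $\uebp^{L,n+1}$ comes from the forward-Euler finite volume update~\eqref{eq:lower.order.curved}. For subcells whose faces lie in the interior of $\Omega_e$ the fluxes are the Rusanov fluxes~\eqref{eq:lo.numflux}--\eqref{eq:fo.rusanov}, and the usual convex-combination argument for first-order schemes shows each such update remains in $\Uad$ under the scheme's time-step restriction. For subcells adjacent to the element interfaces the relevant flux is the blended numerical flux, which by hypothesis is chosen precisely so that these interface-adjacent updates stay in $\Uad$. Thus every subcell value $\uebp^{L,n+1}$ is admissible.

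It then remains to pass from subcell admissibility to admissibility of the mean. The low order element mean is the quadrature average $\overline{\uu}_e^{L,n+1} = \sum_{\bp} \uebp^{L,n+1} w_{\bp}$, and the weights are positive and sum to one (by~\eqref{eq:subcell.defn}), so this is a genuine convex combination of the admissible subcell values. Since $\Uad$ is convex~\eqref{eq:uad.form}, the mean lies in $\Uad$, so the low order scheme is admissibility preserving in means; by part one the blended scheme is as well. The main obstacle is the interior-subcell step: one must verify that the first-order Rusanov update on the non-uniform, non-cell-centred subcells can be rewritten as a convex combination with nonnegative coefficients summing to one under the CFL condition, so that it preserves $\Uad$. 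The interface-adjacent subcells require no further argument, as their admissibility is exactly the hypothesis of the claim.
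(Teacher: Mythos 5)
Your proposal is correct and takes essentially the same route as the paper's proof: part one follows from the identical element-mean update formulas~\eqref{eq:conservation.lw} and~\eqref{eq:low.order.cell.avg.update.curved} with the shared interface flux, and part two from admissibility of every subcell update in~\eqref{eq:lower.order.curved} (interior points by the standard first-order finite volume property under CFL, interface-adjacent points by hypothesis), so that the mean $\overline{\uu}_e^{L,n+1} = \sum_{\bp} \uebp^{L,n+1} w_{\bp}$ lies in the convex set $\Uad$. The only difference is that you spell out the convexity and positive-weight details that the paper's terser proof leaves implicit.
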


\begin{proof}
By~(\ref{eq:conservation.lw}, \ref{eq:low.order.cell.avg.update.curved}), element means are the same for both low and high order schemes. Thus, admissibility in means of one implies the same for the other, proving the first claim. For the second claim, note that our assumptions imply $\uebp^{L, n + 1}$ given by~(\ref{eq:lower.order.curved}, \ref{eq:blended.flux.in.curved}) are in $\Uad$ for all $\bp$. Therefore, we obtain admissibility in means property of the lower order scheme by~\eqref{eq:low.order.cell.avg.update.curved} and thus admissibility in
means for the blended scheme.
\end{proof}

\subsubsection{Flux limiter for admissibility preservation in
1-D}\label{sec:flux.limiter.1d}

Flux limiting ensures that the update obtained by the lower order scheme will
be admissible so that, by Theorem~\ref{thm:lwfr.admissibility}, admissibility
in means is obtained. The procedure of flux limiting will be explained for the element
$\Omega_e = [ x_{\emh}, x_{\eph} ]$. The lower order scheme is
computed with a first order finite volume method so that admissibility is
already ensured for inner solution points; i.e., we already have
\[ \uep^{L, n + 1} \in \Uad, \qquad 1 \le p \le N - 1 \]
The admissibility for the first ($p = 0$) and last solution points ($p = N$)
will be enforced by appropriately choosing the inter-element flux $\F_{\eph}$.
The first step is to choose a candidate for $\F_{\eph}$ which is heuristically
expected to give reasonable control on spurious oscillations while maintaining
accuracy in smooth regions, e.g.,
\[ \F_{\eph} = (1 - \alpha_{\eph})  \F^{\tmop{LW}}_{\eph} + \alpha_{\eph}
\pf_{\eph}, \qquad \alpha_{\eph} = \frac{\alpha_e + \alpha_{e + 1}}{2} \]
where $\pf_{\eph}$ is the lower order flux at the face $\eph$ shared between
FR elements and subcells, and $\alpha_e$ is the blending
coefficient~\eqref{eq:blended.scheme} based on an element-wise smoothness
indicator (Section~\ref{sec:smooth.ind}).

The next step is to correct $\F_{\eph}$ to enforce the admissibility
constraints. The guiding principle of this approach is to perform the
correction within the face loops, minimizing storage requirements and
additional memory reads. The lower order updates in subcells neighbouring the
$\eph$ face with the candidate flux are
\begin{equation}
\begin{split}
\atu_0^{n + 1} & = \uepoz^n - \frac{\Delta t}{w_0 \Delta x_{e + 1}}
(\pf^{e + 1}_{\frac{3}{2}} - \F_{\eph})
\label{eq:low.order.tilde.update}\\
\atu_N^{n + 1} & = \ueN^n - \frac{\Delta t}{w_N \Delta x_e}  (\F_{\eph} -
\pf^e_{\Nph})
\end{split}
\end{equation}
To correct the interface flux, we will again use the fact that low order
finite volume flux $\pf_{\eph} = \pf (\uu_{\eph}^-, \uu_{\eph}^+)$ preserves
admissibility, i.e.,
\begin{align*}
\utilow_0 & = \uepoz^n - \frac{\Delta t}{w_0 \Delta x_{e + 1}}  (\pf^{e +
1}_{\frac{3}{2}} - \pf_{\eph}) \in \Uad\\
\utilow_N & = \ueN^n - \frac{\Delta t}{w_N \Delta x_e}  (\pf_{\eph} -
\pf^e_{\Nph}) \in \Uad
\end{align*}
Let $\{p_k, 1 \le 1 \le K\}$ be the admissibility
constraints~\eqref{eq:uad.form} of the conservation law. The numerical flux is
corrected by iterating over the admissibility constraints as explained in
 Algorithm~\ref{alg:blended.flux}

\begin{algorithm}
\caption{Computation of blended flux $\F_{\eph}$\label{alg:blended.flux}}
\textbf{Input:} $\F_{\eph}^{\tmop{LW}}, \pf_{\eph}, \pf^{e + 1}_{\frac{3}{2}}, \pf^e_{\Nph}, \uepoz^n, \ueN^n, \alpha_e, \alpha_{e + 1}$ \\
\textbf{Output:} $\F_{\eph}$
\begin{algorithmic} 
\State $\alpha_{\eph} = \frac{\alpha_e + \alpha_{e + 1}}{2}$

\State $\F_{\eph} \gets (1 - \alpha_{\eph}) \F_{\eph}^{\tmop{LW}} + \alpha_{\eph}  \pf_{\eph}$
\Comment{Heuristic guess to control oscillations}
\State {$\atu_0^{n + 1} \gets \uepoz^n - \frac{\Delta t}{w_0 \Delta x_{e + 1}}  (\pf^e_{\frac{3}{2}} - \F_{\eph})$} \Comment{FV inner updates with guessed $\F_{\eph}$}

\State $\atu_N^{n + 1} \gets \ueN^n - \frac{\Delta t}{w_N \Delta x_e}  (\F_{\eph} - \pf^e_{\Nph})$
\State {$\utilow_0 = \uepoz^n - \frac{\Delta t}{w_0 \Delta x_{e +  1}}  (\pf^{e + 1}_{\frac{3}{2}} - \pf_{\eph})$}\Comment{FV inner updates with $\pf_{\eph}$}

\State {$\utilow_N = \ueN^n - \frac{\Delta t}{w_N \Delta x_e}  (\pf_{\eph} - \pf^e_{\Nph})$}
\For{$k = 1 : K$} \Comment{Correct $\F_{\eph}$ for $K$ admissibility constraints}
\State \correction{$\epsilon_0 \gets \frac{1}{10} p_k (\utilow_0)$}

\State \correction{$\epsilon_N \gets \frac{1}{10} p_k (\utilow_N)$}

\State {$\theta \gets \min \left ( \min_{j = 0, N} \left| \frac{\epsilon_j - p_k (\atu_j^{\text{low}, n + 1}))}{p_k (\atu_j^{n + 1}) - p_k (\atu_j^{\text{low}, n + 1})} \right|, 1 \right )$}

\State {$\F_{\eph} \gets \theta \F_{\eph} + (1 - \theta)  \pf_{\eph}$}

\Comment FV inner updates with guessed $\F_{\eph}$

\State {$\atu_0^{n + 1} \gets \uepoz^n - \frac{\Delta t}{w_0 \Delta x_{e + 1}}  (\pf^{e + 1}_{\frac{3}{2}} - \F_{\eph})$}

\State {$\atu_N^{n + 1} \gets \ueN^n - \frac{\Delta t}{w_N\Delta x_e}  (\F_{\eph} - \pf^e_{\Nph})$}
\EndFor
\end{algorithmic}
\end{algorithm}

By concavity of $p_k$, after the $k^{\tmop{th}}$ iteration, the updates
computed using flux $\F_{\eph}$ will satisfy
\begin{equation}
\begin{split}
p_k (\atu_j^{n + 1}) & = p_k (\theta (\atu_j^{n + 1})^{\tmop{prev}} + (1 -
\theta) \utilow_j)\\
& \ge \theta p_k ((\atu_j^{n + 1})^{\tmop{prev}}) + (1 -
\theta) p_k (\utilow_j) \\
&\ge \epsilon_j, \quad j = 0, N
\end{split}
\end{equation}
satisfying the $k^{\tmop{th}}$ admissibility constraint; here $(\atu_j^{n +
1})^{\tmop{prev}}$ denotes $\atu_j^{n + 1}$ before the $k^{\tmop{th}}$
correction and the choice of $\epsilon_j = \frac{1}{10} p_k (\utilow_j)$ is
made following~{\cite{ramirez2021}}. After the $K$ iterations, all
admissibility constraints will be satisfied and the resulting flux $\F_{\eph}$
will be used as the interface flux keeping the lower order updates and thus
the element means admissible. Thus, by Theorem~\ref{thm:lwfr.admissibility},
the choice of blended numerical flux gives us admissibility preservation in
means. We now use the scaling limiter of~{\cite{Zhang2010b}} to obtain an
admissibility preserving scheme as defined in Definition~\ref{defn:adm.pres}.
An overview of the complete residual computation of Lax-Wendroff Flux
Reconstruction scheme can be found in Algorithm~\ref{alg:lw.residual}.

\subsubsection{Flux limiter for admissibility preservation on curved
meshes}\label{sec:flux.correction.curved}

Consider the calculation of the blended numerical flux for a corner
solution point of the element, see Figure~\ref{fig:curved.subcells}. A corner
solution point is adjacent to interfaces in all $d$ directions, making its
admissibility preservation procedure different from 1-D. In particular, let us
consider the corner solution point $\bp = \bzero$ and show how we can apply
the 1-D procedure in Section~\ref{sec:flux.limiter.1d} to ensure admissibility
at such points. The same procedure applies to other corner and non-corner
points. The lower order update at the corner is given
by~\eqref{eq:blended.flux.in.curved}
\begin{equation}
\begin{split}
\atu_{e, \bzero}^{n + 1} & = \uu_{e, \bzero}^n - \frac{\Delta t}{J_{e, \bp}} \sum_{i = 1}^d
\frac{1}{w_{p_i}}  [ (
\tf^{\delta}_{C_{\bzero}} \cdot \bnr_{R,i} )^{\ast} ( \vxi_i^R
) + ( \atF^{\delta}_e \cdot \bnr_{L,i} )^{\ast} (
\vxi_i^L ) ]
\end{split}
\end{equation}
where $\bnr_i = \be_i$ is the reference normal vector on the subcell interface
in direction $i$, $( \tf^{\delta}_{C_{\bzero}} \cdot \bnr_{R,i}
)^{\ast}$ denotes the lower order
flux~\eqref{eq:lower.order.flux.curved} at the subcell $C_{\bzero}$
surrounding $\vxi_{\bzero}$, $( \atF^{\delta}_e \cdot \bnr_{L,i}
)^{\ast} ( \vxi_i^L )$ is the initial guess candidate for the
blended numerical flux. Pick $k_i > 0$ such that $\sum_{i = 1}^d k_i = 1$ and
\begin{equation}
\atu_i^{\text{low}, n + 1} : = \uu_{e, \bzero}^n - \frac{\Delta t}{k_i
w_{p_i} J_{e, \bp}}  [ ( \tf^{\delta}_{C_{\bzero}} \cdot \bnr_{R,i}
)^{\ast} ( \vxi_i^R ) + ( \tf \cdot \bnr_{L,i}
)^{\ast} ( \vxi_i^L ) ], \qquad 1 \leq i \leq d
\label{eq:low.update.2d}
\end{equation}
satisfy
\begin{equation}
\atu_i^{\text{low}, n + 1} \in \Uad, \qquad 1 \leq i \leq d
\label{eq:2d.low.update.admissibility.condn}
\end{equation}
where $( \tf \cdot \bnr_i )^{\ast} ( \vxi_i^L )$ is the
first order finite volume flux computed at the FR element interface.

The $\{ k_i \}$ that ensure~\eqref{eq:2d.low.update.admissibility.condn} will
exist provided the appropriate CFL restrictions are satisfied because the
lower order scheme using the first order numerical flux at element interfaces
is admissibility preserving. The choice of $\{ k_i \}$ should be made so
that~\eqref{eq:2d.low.update.admissibility.condn} is satisfied with the least
time step restriction. However, we make the trivial choice of equal $k_i$'s
motivated by the experience of~{\cite{babbar2023admissibility}}, where it was
found that even this choice does not impose any additional time step constraints over
the Fourier stability limit. After choosing $k_i$'s, we have reduced the
update to 1-D and can repeat the same procedure as in
Algorithm~\ref{alg:blended.flux} where for all directions $i$, the
neighbouring element is chosen along the normal direction. After the flux
limiting is performed following the Algorithm~\ref{alg:blended.flux}, we obtain $(\atF^{\delta}_{e}{\cdot}{\bnr}_{L,i})^{\ast}(\vxi_{i}^L)$
such that
\begin{equation}
\atu_i^{n + 1} : = \uu_{e, \bzero}^n - \frac{\Delta t}{k_i w_{p_i}
J_{e, \bp}}  [ ( \tf^{\delta}_{C_{\bzero}} \cdot \bnr_{R,i}
)^{\ast} ( \vxi_i^R ) + ( \atF^{\delta} \cdot \bnr_{L,i}
)^{\ast} ( \vxi_i^L ) ] \in \Uad
\label{eq:2d.adm.numflux.desired}
\end{equation}
Then, we will get
\begin{equation}
\sum_{i = 1}^d k_i  \atu_i^{n + 1} = \atu_{e, \bzero}^{n + 1} \in
\Uad \label{eq:2d.xy.implies.admissibility}
\end{equation}
along with admissibility of all other corner and non-corner solution points
where the flux $( \atF^{\delta} \cdot \bnr_i )^{\ast} (
\vxi_i^R )$ is used. Finally, by Theorem~\ref{thm:lwfr.admissibility},
admissibility in means (Definition~\ref{defn.mean}) is obtained and the
scaling limiter of~{\cite{Zhang2010b}} can be used to obtain an admissibility
preserving scheme (Definition~\ref{defn:adm.pres}).


\section{Conservation property of LWFR on curvilinear grids}\label{sec:app.lwfr.conservative}
In order to show that the LWFR scheme is conservative, multiply~\eqref{eq:lwfr.update.curvilinear} with $J_{e,\bp} w_{\bp}$ and sum over $\bp \in \Nnd$ to get, using the exactness of quadrature
\begin{equation}
\label{eq:semi.integral.form}
\begin{split}
 \overline{\uu}_e^{n + 1} = \overline{\uu}_e^n & - \frac{\Delta t}{|\Omega_e|}
\int_{\Oo}\nabla_{\vxi} \cdot \tF^{\delta}_e ( \vxi ) \ud \vxi \\
& - \frac{\Delta t}{|\Omega_e|}  \int_{\Oo}\sum_{i = 1}^d ( (
\tF_e \cdot \bnr_{R,i} )^{\ast} - \tF^{\delta}_e \cdot \bnr_{R,i} )
( \vxi_i^R ) g_R' (\xi_{p_i}) -
 ( ( \tF_e \cdot \bnr_{L,i} )^{\ast} -
\tF^{\delta}_e \cdot \bnr_{L,i} ) ( \vxi_i^L ) g_L' (\xi_{p_i}) \ud \vxi
\end{split}
\end{equation}
where $\vxi_i^s$ are as defined in~\eqref{eq:xis.notation}. Then, note the following integral identities that are an application of Fubini's theorem followed by fundamental theorem of Calculus
\begin{align*}
\int_{\Oo}\partial_{\xi^i} \cdot \tF^{\delta}_e ( \vxi ) \ud \vxi &= \int_{\partial \Omega_{o,i}^L} [\tF^{\delta}_e \cdot \bnr_{L,i}] \ud S_{\vxi} + \int_{\partial \Omega_{o,i}^R} [\tF^{\delta}_e \cdot \bnr_{R,i}]\ud S_{\vxi} \\
\int_{\Oo} ((
\tF_e \cdot \bnr_{R,i} )^{\ast} - \tF^{\delta}_e \cdot \bnr_{R,i})
( \vxi_i^R )g_R' (\xi_{p_i}) \ud \vxi &= \int_{\partial \Omega_{o,i}^R} [(
\tF_e \cdot \bnr_{R,i} )^{\ast} - \tF^{\delta}_e \cdot \bnr_{R,i}] \ud S_{\vxi} \\
\int_{\Oo} ((
\tF_e \cdot \bnr_{L,i} )^{\ast} - \tF^{\delta}_e \cdot \bnr_{L,i})
( \vxi_i^L )g_L' (\xi_{p_i}) \ud \vxi &= -\int_{\partial \Omega_{o,i}^L} [(
\tF_e \cdot \bnr_{L,i} )^{\ast} - \tF^{\delta}_e \cdot \bnr_{L,i}]\ud S_{\vxi}
\end{align*}
where $\Ois$ is as in Figure~\ref{fig:ref.map} and we used $g_L(-1) = g_R(1) = 1$, $g_L(-1) = g_R(1) = 0$. Then substituting these identities into~\eqref{eq:semi.integral.form} gives us the conservative update of the cell average~\eqref{eq:conservation.lw}.

\bibliographystyle{siam}
\bibliography{references}

\end{document}